\numberwithin{equation}{section}
\newtheorem*{assumption*}{Spectral Assumption}
\newtheorem{theorem}{Theorem}
\newtheorem{rem}[theorem]{Remark}
\newtheorem{lemma}[theorem]{Lemma}
\let\eps\varepsilon
\title[Diffusive limit of the radiative heat transfer system]{Diffusive limits of the steady state radiative heat transfer system: Boundary layers}
\author{Mohamed Ghattassi}
\address{Department of Mathematics, New York University in Abu Dhabi, Saadiyat Island, P.O. Box 129188, Abu Dhabi, United Arab Emirates {\sf mg6888@nyu.edu}}
\author{Xiaokai Huo}
\address{Department of Mathematics, Iowa State University,  411 Morrill Rd, Ames, IA 50011, USA, {\sf xhuo@iastate.edu}}
\author{Nader Masmoudi}
\address{Department of Mathematics, New York University in Abu Dhabi, Saadiyat Island, P.O. Box 129188, Abu Dhabi, United Arab Emirates--
Courant Institute of Mathematical Sciences, New York University, 251 Mercer Street, New York, NY 10012, USA, {\sf nm30@nyu.edu}}
\begin{document}

\begin{abstract}

In this paper, we study the diffusive limit of the steady state radiative heat transfer system for non-homogeneous Dirichlet boundary conditions in a bounded domain with flat boundaries. A composite approximate solution is constructed using asymptotic analysis taking into account of the boundary layers. The convergence to the approximate solution in the diffusive limit is proved using a Banach fixed point theorem. The major difficulty lies on the nonlinear coupling between elliptic and kinetic transport equations. To overcome this problem, a spectral assumption ensuring the linear stability of the boundary layers is proposed. Moreover, a combined $L^2$-$L^\infty$ estimate and the Banach fixed point theorem are used to obtain the convergence proof. This results extend our previous work \cite{ghattassi2020diffusive} for the well-prepared boundary data case to the ill-prepared case when boundary layer exists.

Dans cet article, nous étudions la limite de diffusion du système de transfert de chaleur radiatif en régime stationnaire pour des conditions aux limites de type Dirichlet non homogènes dans un domaine borné avec frontière plate. Une solution approchée composite est construite à l'aide d'une analyse asymptotique prenant en compte les couches limites. La convergence vers la solution approchée dans la limite de diffusion est démontrée à l'aide d'un théorème de point fixe de Banach. La difficulté majeure réside dans le couplage non linéaire entre l'équation elliptique et l'équation de transport cinétique. Pour remédier à ce problème, une hypothèse spectrale assurant la stabilité linéaire des couches limites est proposée. De plus, une estimation combinée $L^{2}-L^{\infty}$ et le théorème du point fixe de Banach sont utilisés pour obtenir la preuve de convergence. Ces résultats étendent nos travaux précédents \cite{ghattassi2020diffusive}  pour le cas des données aux limites bien préparées au cas mal préparé lorsque la couche limite existe.

\end{abstract}

\keywords{Radiative transfer system, Diffusive limits, Boundary layer, Milne problem}

\maketitle   
\tableofcontents
\section{Introduction}
\subsection{Problem Statements}
We consider the following steady state radiative heat transfer system in the space $x\in\Omega = [0,1]\times \mathbb{T}^2$ and $\beta \in \mathbb{S}^2$:
\begin{align}
    &\eps^2 \Delta T^\eps + \langle \psi^\eps - (T^\eps)^4 \rangle =0,\label{eq:1}\\
    &\eps \beta\cdot \nabla \psi^\eps + \psi^\eps - (T^\eps)^4 =0,\label{eq:2}
\end{align} 
with Dirichlet boundary conditions 
\begin{align}
    & T^\eps(x) = T_{b}(x),\text{ for } x\in \partial\Omega = \{0,1\}\times\mathbb{T}^2, \label{eq:1b}\\
     &\psi^\eps(x,\beta) = \psi_{b}(x,\beta),\text{ for } (x,\beta) \in \Gamma_-.\label{eq:2b}
 \end{align}
Here $T^\eps=T^\eps(x)$ is the temperature, $\psi^\eps=\psi^\eps(x,\beta)$ is the radiation intensity. The bracket $\langle \cdot \rangle$ denotes the momentum $\langle \psi(\beta)\rangle = \int_{\mathbb{S}^2}\psi(\beta) d\beta$.
We denote the boundary set $\Gamma$ by 
\begin{align*}
    \Gamma = \{(x,\beta):x\in\partial\Omega, \beta\in\mathbb{S}^2\},
\end{align*}
and $\Gamma_+ = \Gamma \cap \{(x,\beta):\beta\cdot n(x)>0\}$ being the out-flow boundary, and $\Gamma_- = \Gamma \cap \{(x,\beta):\beta\cdot n(x) <0\}$ being the in-flow boundary, where $n(x)$ is the exterior normal vector on the boundary. 
Note that the boundary conditions are imposed on the in-flow boundary and the values of $\psi$ on the out-flow boundary are determined from the system.

When the boundary data is well-prepared, i.e. $\psi_b(x,\beta)=T_b^4(x)$ for $(x,\beta)\in\Gamma_-$, the solution of \eqref{eq:1}-\eqref{eq:2} was proved in \cite{ghattassi2020diffusive} to converge to the following nonlinear elliptic equation 
\begin{align}
    &\Delta T_0 + \frac{4\pi}{3} \Delta T_0^4=0,\label{eq:e10}\\
    &\psi_0 = T_0^4,\label{eq:e20}
\end{align}
subject to boundary conditions 
\begin{align}\label{eq:e0b}
    T_0(x)=T_b(x),\quad \text{for any }x\in\partial\Omega.
\end{align}
The proof is given by using two methods; the weak convergence method and the relative entropy method. However, both methods rely on the assumption $\psi_b=T_b^4$ to obtain the estimates that are needed for the convergence estimates and fail for general boundary data, due to the presence of boundary layers.

The main objective of this paper is to study the diffusive limit ($\eps\to 0$) for the 
general boundary data. First, a boundary layer problem is used to describe the behavior of solutions to system \eqref{eq:1}-\eqref{eq:2} near the boundary as $\eps\to 0$. Second, a composite approximate solution is constructed by combining the solutions to the boundary layer problem and solutions to system \eqref{eq:e10}-\eqref{eq:e20}. In particular, higher order approximate solutions
are constructed via asymptotic analysis. Last, the convergence of solutions to system \eqref{eq:1}-\eqref{eq:2} to the constructed approximate solution is proved by using an $L^2$-$L^\infty$ type estimate.

The boundary layer problem has already been studied in \cite{Bounadrylayer2019GHM2}. 
Here for simplicity of notations, we assume that the boundary data at the top boundary $\{x_1=1\}$ is well-prepared, i.e. $\psi_b(x,\beta) = T_b^4(x)$ for $(x,\beta)\in \Gamma_-\cap \{x_1=1\}$, so that boundary layer only exists at the bottom $\{x_1=0\}$. Introducing $\eta= x_1/\eps^2$, the corresponding boundary layer problem for system \eqref{eq:1}-\eqref{eq:2} is
\begin{align}
   &  \partial_\eta^2 \tilde{T}_0 + \langle \tilde{\psi}_0 - \tilde{T}_0^4 \rangle = 0, \label{eq:ml1}\\
   &\mu \partial_\eta \tilde{\psi}_0 + \tilde{\psi}_0 - \tilde{T}_0^4 = 0,\label{eq:ml2}
\end{align}
with boundary conditions 
\begin{align}
    &\tilde{T}_0(\eta=0,x') = T_b(x'),\quad \text{for any }x'\in\mathbb{T}^2,\label{eq:ml1b}\\
    &\tilde{\psi}_0(\eta=0,x',\beta) = \psi_b(0,x',\beta),\quad \text{for any } x'\in\mathbb{T}^2,\, (0,x',\beta) \in \Gamma_-,\label{eq:ml2b}
\end{align}
where $\mu=-n(x)\cdot\beta = \beta_1$ and $\tilde{T}_0=\tilde{T}_0(\eta,x')$, $\tilde{\psi}_0=\tilde{\psi}_0(\eta,x',\beta)$ with $x'=(x_2,x_3)\in\mathbb{T}^2$. The above problem is also called \emph{nonlinear Milne problem} of the radiative heat transfer system. Existence of weak solutions for this problem was proved in \cite{Bounadrylayer2019GHM2}. Moreover, the weak solutions are shown to converge  as $\eta\to\infty$ to some non-negative constants $\tilde{T}_{0,\infty}(x'):=\lim_{\eta\to\infty} \tilde{T}_0(\eta,x')$, $\tilde{\psi}_{0,\infty}(x',\beta):=\lim_{\eta\to\infty} \tilde{\psi}_0(\eta,x',\beta)$, which give the boundary conditions for the nonlinear limiting equation \eqref{eq:e10} by
\begin{align}\label{eq:T0in}
    T_0(x_1=0,x') = \tilde{T}_{0,\infty}(x'),\quad \text{for any }x'\in\mathbb{T}^2.
\end{align}
However, the linear stability for the nonlinear Milne problem holds under a spectral assumption, which reads as 
\begin{enumerate}[label=({\bf\Alph*})]
\item  \label{asA} There exists a constant $\tau>0$ such that the function $\tilde{T}_0 \in L^2_{\rm loc}(\mathbb{R}_{+}\times \mathbb{T}^2)$ satisfies the inequality
\begin{align}\label{eq:sp}
  \quad M\int_0^\infty e^{2\tau x}(2\tilde{T}_0^{\frac32})^2 |\partial_x f|^2 dx  \ge  4\int_0^\infty  e^{2\tau x} |\partial_x (2\tilde{T}_0^{\frac32})|^2 f^2 dx 
\end{align}
 for any measurable function $f \in C^{1}(\mathbb{R}_{+})$ with $f(0)=0$, for some constant $M<1$.  
\end{enumerate}

A composite approximate solution can thus be constructed by combining  the interior approximate problem \eqref{eq:e10}-\eqref{eq:e20} with \eqref{eq:T0in} and the solution to the Milne problem \eqref{eq:ml1}-\eqref{eq:ml2b}. To do this,
we introduce a cut-off function $\chi=\chi(x_1)=\chi(\eps\eta)$ such that
\begin{align}\label{eq:cutoff}
    \chi(x_1) = \left\{\begin{array}{cl}
        1,&\text{for } 0\le x_1\le \frac14 \delta,\\
        0,&\text{for }x_1>\frac38\delta,\\
        \in (0,1), & \text{otherwise.}
    \end{array}\right.
\end{align}
Here where $\delta>0$ is a small constant and will be chosen later (see Theorem \ref{thm-ap}).  The boundary layer corrections are given by
\begin{align}\label{eq:bar0def2}
    \bar{T}_0 = \chi(x_1)(\tilde{T}_0-\tilde{T}_{0,\infty}),\quad \bar{\psi}_0 = \chi(x_1)(\tilde{\psi}_0-\tilde{T}_{0,\infty}^4).
\end{align}
The major goal of this work is to prove that $(T_0+\bar{T}_0,\psi_0+\bar{\psi}_0)$ provides an approximate solution to system \eqref{eq:1}-\eqref{eq:2} when $\eps$ is small.

\subsection{Main results}
The main result of this paper is the following theorem.
\begin{theorem}\label{thm.1}
Let  $T_b \in C^2(\partial\Omega)$ and $\psi_b \in C^1(\Gamma_-)$ be non-negative functions. Let $(T_0,\psi_0)$ be the smooth solution to system \eqref{eq:e10}-\eqref{eq:e20} with boundary condition \eqref{eq:T0in} and $(\tilde{T}_0,\tilde{\psi}_0)$ be the smooth solution to the nonlinear Milne problem \eqref{eq:ml1}-\eqref{eq:ml2b}. Let the boundary layer correction $(\bar{T}_0,\bar{\psi}_0)$ be given by \eqref{eq:bar0def2}.
 Assume $\tilde{T}_0$ satisfies the spectral assumption \ref{asA} and has a lower bound $\tilde{T}_0\ge a$ for some constant $a>0$. Then for $\eps>0$ sufficiently small, there exists a unique solution $(T^\eps,\psi^\eps) \in L^\infty(\Omega) \times L^\infty(\Omega\times\mathbb{S}^2)$ to system \eqref{eq:1}-\eqref{eq:2} with boundary conditions \eqref{eq:1b}-\eqref{eq:2b}, satisfying
    \begin{align}\label{eq:thm}
        &\|T^\eps - T_0 - \bar{T}_0 \|_{L^\infty(\Omega)} = O(\eps),\quad
        \|\psi^\eps - T_0^4 - \bar{\psi}_0\|_{L^\infty(\Omega\times\mathbb{S}^2)} = O(\eps).
    \end{align}
\end{theorem}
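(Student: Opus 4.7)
The plan is to prove the theorem by constructing a refined composite approximate solution $(T^\eps_{\mathrm{app}},\psi^\eps_{\mathrm{app}})$ of order higher than one, writing the unknown as approximate plus remainder, deriving a linearized system satisfied by the remainder with a small forcing, and then closing the argument by a Banach fixed point applied in a combined $L^2$--$L^\infty$ norm. The spectral assumption \ref{asA} and the pointwise lower bound $\tilde T_0 \ge a>0$ will be used to produce the coercivity needed for the linear estimate.

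The first step is to refine the leading order. I would write
\begin{equation*}
T^\eps_{\mathrm{app}} = T_0 + \bar T_0 + \eps T_1 + \eps \bar T_1,\qquad
\psi^\eps_{\mathrm{app}} = T_0^4 + \bar\psi_0 + \eps \psi_1 + \eps \bar\psi_1,
\end{equation*}
where the interior corrections $(T_1,\psi_1)$ solve an appropriate linear elliptic problem obtained by collecting $O(\eps)$ terms in the Hilbert expansion of \eqref{eq:1}--\eqref{eq:2} (with $\psi_1$ reconstructed from $T_1$ up to a flux term $-\eps\beta\cdot\nabla T_0^4$), and the boundary-layer corrections $(\bar T_1,\bar\psi_1)$ are the cut-off extensions of the solutions to the linearized Milne problem around $(\tilde T_0,\tilde\psi_0)$, with boundary data chosen to cancel the trace mismatch of $\eps T_1$ and $\eps\psi_1$ at $x_1=0$. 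Plugging this ansatz into \eqref{eq:1}--\eqref{eq:2}, the residuals coming from the asymptotic equations vanish to order $\eps^2$, while the terms produced by commuting the cut-off $\chi$ with the transport and Laplacian operators are exponentially small in $\eps$ thanks to the exponential decay of $\tilde T_0-\tilde T_{0,\infty}$ guaranteed by \ref{asA}; the nonlinear residue in $T^4$ is also $O(\eps^2)$ pointwise.

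Next, set $R_T=T^\eps-T^\eps_{\mathrm{app}}$ and $R_\psi=\psi^\eps-\psi^\eps_{\mathrm{app}}$, linearize the quartic term around the approximate temperature $\mathcal T:=T^\eps_{\mathrm{app}}$, and obtain
\begin{align*}
& \eps^2\Delta R_T + \langle R_\psi\rangle - 4\pi\, \mathcal T^{\,3}\,R_T \;=\; \mathcal R^{(1)} + \mathcal N^{(1)}(R_T),\\
& \eps\,\beta\cdot\nabla R_\psi + R_\psi - 4\mathcal T^{\,3}\,R_T \;=\; \mathcal R^{(2)} + \mathcal N^{(2)}(R_T),
\end{align*}
with $\mathcal R^{(i)}=O(\eps^2)$ the residues from the previous step and $\mathcal N^{(i)}(R_T)=O(R_T^2)$ the higher-order Taylor remainders. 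I would first solve the linear problem (with a given source) by a two-step estimate: multiply the elliptic equation by $R_T$ and integrate, use the kinetic equation to replace $\langle R_\psi\rangle$ and to generate a good term $\int 4\mathcal T^{\,3}\,|R_T|^2$ together with a dissipation $\int \eps^2|\nabla R_T|^2+\int |R_\psi-\langle R_\psi\rangle/(4\pi)|^2$; the weight $4\mathcal T^{\,3}\geq 4a^3>0$ provides $L^2$-coercivity in the interior, while near the boundary layer the spectral hypothesis \ref{asA} turns the indefinite term $\partial_\eta(2\tilde T_0^{3/2})^2$ into a controlled contribution, giving an $L^2$ bound of the form $\|R_T\|_{L^2}+\|R_\psi\|_{L^2}\lesssim \|\mathcal R\|_{L^2}$ uniformly in $\eps$. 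The $L^\infty$ bound on $R_\psi$ then follows from integration along characteristics of the transport equation, where the source involves $\|R_T\|_{L^\infty}$; the $L^\infty$ bound on $R_T$ is deduced from the elliptic equation via De Giorgi--type or maximum-principle considerations, using $\langle R_\psi\rangle$ as a source and the coercive zero-order term $4\pi \mathcal T^{\,3}$.

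The main obstacle I expect is precisely this linear coercivity across the boundary layer: the zero-order coefficient $4\mathcal T^{\,3}$ degenerates in its dependence on $\eps$ through $\bar T_0$, and the boundary-layer contribution to the energy identity contains terms of the type $\int (\partial_\eta \tilde T_0^{3/2})^2 |R_T|^2$ which have no definite sign, so the estimate would fail without \ref{asA}; the weight $e^{2\tau x}$ in \eqref{eq:sp} is what makes the quadratic form coercive after integration by parts against the natural test function in the Milne scale. Once the linear bound is established with a constant independent of $\eps$, the nonlinear terms $\mathcal N^{(i)}$ are absorbed because they are quadratic in $R_T$ and we are working in a small ball of radius $O(\eps)$, so the map $(R_T,R_\psi)\mapsto$ (solution of the linear problem with right-hand side $\mathcal R+\mathcal N(R_T)$) is a contraction on that ball for $\eps$ small. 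The Banach fixed point theorem then yields existence and uniqueness of $(R_T,R_\psi)$ of size $O(\eps)$, which, combined with the construction of $T^\eps_{\mathrm{app}}$, gives \eqref{eq:thm}.
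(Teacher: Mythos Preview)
Your overall architecture---build a composite approximate solution, linearize around it, prove a linear $L^2$--$L^\infty$ estimate using the spectral assumption, and close by Banach fixed point---is exactly the paper's. The gap is quantitative, and it is fatal for the scheme as you wrote it.

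You assert that the linear map satisfies $\|R_T\|_{L^2}+\|R_\psi\|_{L^2}\lesssim \|\mathcal R\|_{L^2}$ \emph{uniformly in $\eps$}. This is not what the structure of the equations gives. The elliptic equation carries an $\eps^2$ in front of $\Delta$, so the energy identity (multiplying by $4\mathcal T^3 R_T$ and combining with the transport equation tested against $R_\psi$) yields only
\[
\eps^2\|\nabla R_T\|_{L^2}^2 + \|R_\psi-4\mathcal T^3 R_T\|_{L^2}^2 \lesssim \text{(source terms)},
\]
which controls neither $\|R_T\|_{L^2}$ nor $\|R_\psi\|_{L^2}$ separately with an $\eps$-free constant. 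To extract $\|R_T\|_{L^2}$ one needs an additional duality argument (solve $\Delta\ell=R_T$ and expand $\langle R_\psi-4\mathcal T^3 R_T\rangle$ iteratively using the transport equation), and this step produces an estimate of the form $\eps\|R_T\|_{H^1}\lesssim \eps^{-1}\|\mathcal R\|_{L^2}$; the subsequent $L^\infty$ bound, coming from elliptic regularity on $\Delta R_T=\eps^{-2}(\cdots)$, loses yet more powers, so that altogether $\|R_T\|_{L^\infty}\lesssim \eps^{-3}\|\mathcal R\|_{L^2}$. With your first-order expansion the residual is only $O(\eps^2)$, which after division by $\eps^3$ gives $O(\eps^{-1})$: the map does not send the ball of radius $O(\eps)$ into itself, and the quadratic nonlinearity is not absorbed either.

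The remedy, and what the paper actually does, is to push the composite expansion to order $N=5$ so that $\mathcal R=O(\eps^6)$; then the fixed point closes in a ball of radius $O(\eps^3)$, and the final $O(\eps)$ statement follows by discarding the higher-order correctors. Your plan becomes correct once you (i) build the expansion to sufficiently high order and (ii) replace the claimed uniform linear bound by the honest one that loses three powers of $\eps$ in $L^\infty$.
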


The above theorem rigorously shows the convergence of solutions of system \eqref{eq:1}-\eqref{eq:2} to the approximate solution $(T_0+\bar{T}_0)$ as $\eps\to 0$.
In order to prove the above theorem, we need higher order approximate expansions beyond \eqref{eq:e10}-\eqref{eq:e20} and \eqref{eq:ml1}-\eqref{eq:ml2}. 
Let $N\ge 1$ be an integer, we take the ansatz for the composite approximate solution $(T^a,\psi^a)$ as
\[
T^\eps \sim T^a:=\sum_{k=0}^N\eps^k(T_k + \bar{T}_k), \quad \psi^\eps \sim \psi^a:=  \sum_{k=0}^N \eps^k(\psi_k + \bar{\psi}_k),
\] 
where $(T_k,\psi_k)$ is the $k$-th order interior approximation (satisfying system \eqref{eq:otk}-\eqref{eq:opsik}) and $(\bar{T}_k,\bar{\psi}_k)$ is the $k$-th order boundary layer correction (defined by \eqref{eq:barkdef} and system \eqref{eq:bTk}-\eqref{eq:bpsik}), see Section 2 for details of the derivations. Note that due to the nonlinearity of $(T^\eps)^4$, the interior expansion and boundary layer expansion are coupled. In particular, Taylor's expansions of the interior approximations are used for the  construction of the boundary layer corrections.
The composite approximate solution could be shown to satisfy
\begin{align}
    &\eps^2 \Delta T^a + \langle \psi^a - (T^a)^4 \rangle =\mathcal{R}_1(T^a,\psi^a), \label{eq:1a}\\
    &\eps \beta\cdot \nabla \psi^a + \psi^a - (T^a)^4 = \mathcal{R}_2(T^a,\psi^a),\label{eq:2a}
\end{align}
with boundary conditions 
\begin{align}
    &T^a(x) = T_b(x),\quad \text{for any }x\in \partial\Omega,\label{eq:1ab}\\
    &\psi^a(x,\beta) = \psi_b(x,\beta),\text{ for } (x,\beta)\in \Gamma_-,\label{eq:2ab}
\end{align}
where $\mathcal{R}_1(T^a,\psi^a)$ and $\mathcal{R}_2(T^a,\psi^a)$ are the approximation errors and are of order $\eps^{N+1}$ (see  Theorem \ref{thm-ap}). 

The main idea for the proof of Theorem \ref{thm.1} is to show the existence and uniqueness of system \eqref{eq:1}-\eqref{eq:2} in a small neighborhood of the approximate solution $(T^a,\psi^a)$. In order to achieve this, we construct a sequence of functions $\{T^k,\psi^k\}_{k=0}^\infty$ solving 
\begin{align}
    &\eps^2\Delta T^k + \langle \psi^k - 4(T^a)^3 T^k\rangle = \langle (T^{k-1})^4 - 4(T^a)^3T^{k-1}\rangle,\label{eq:tt1}\\
    &\eps \beta\cdot \nabla \psi^k +  \psi^k - 4(T^a)^3 T^k = (T^{k-1})^4 - 4(T^a)^3T^{k-1},\label{eq:tt2}
\end{align}
with boundary conditions 
\begin{align*}
    &T^k(x) = T_b(x),\quad \text{for any }x\in \partial\Omega,\\
    &\psi^k(x,\beta) = \psi_b(x,\beta),\text{ for } (x,\beta)\in \Gamma_-.
\end{align*}
This defines a mapping $\mathcal{T}: L^\infty(\Omega)\times L^\infty(\Omega \times\mathbb{S}^2) \to  L^\infty(\Omega)\times L^\infty(\Omega \times\mathbb{S}^2)$ with $(T^k,\psi^k) =\mathcal{T}((T^{k-1},\psi^{k-1}))$. For $\eps>0$ sufficiently small and $N\ge 5$, $s\ge 3$, the mapping $\mathcal{T}$ can be shown to be a contraction  in the space 
\begin{align}
    \mathcal{O}_s:=&\{(T,\psi)\in L^2\cap L^\infty(\Omega)\times L^2\cap L^\infty(\Omega\times\mathbb{S}^2):\nonumber\\
    &\qquad \|T-T^a\|_{L^2\cap L^\infty(\Omega)}\le K \eps^s, \|\psi-\psi^a\|_{L^2\cap L^\infty(\Omega\times\mathbb{S}^2)}\le K\eps^s\},
\end{align}
where $K>0$ is a positive constant. Then by the Banach fixed point theorem, there exists a fixed point $(T^\eps,\psi^\eps)$ of $\mathcal{T}$  such that $\mathcal{T}((T^\eps,\psi^\eps)) = (T^\eps,\psi^\eps)\in \mathcal{O}_s$. By the definition of $\mathcal{T}$, $(T^\eps,\psi^\eps)$ solves system \eqref{eq:1}-\eqref{eq:2} with boundary conditions \eqref{eq:1b}-\eqref{eq:2b}.
For example, we can take $s=3$ and $N=5$ in the above arguments and conclude that there exists a unique solution $(T^\eps,\psi^\eps) \in \mathcal{O}_{s=3}$, i.e.
\begin{align}
    &\left\|T^\eps - \sum_{k=0}^5 \eps^k T_k - \sum_{k=0}^{5}\eps^k \bar{T}_k \right\|_{L^2\cap L^\infty(\Omega)} \le C \eps^3, \label{eq:111}\\
    &\left\|\psi^\eps - \sum_{k=0}^5 \eps^k \psi_k - \sum_{k=0}^{5} \eps^k\bar{\psi}_k \right\|_{L^2\cap L^\infty(\Omega\times\mathbb{S}^2)} \le C \eps^3,\label{eq:112}
\end{align}
which leads to \eqref{eq:thm} and proves Theorem \ref{thm.1}.

One of the most elusive and difficult issues to prove Theorem \ref{thm.1} is to  show that $\mathcal{T}$ is a contraction mapping.
This is solved by using an $L^2$-$L^\infty$ estimates on system \eqref{eq:tt1}-\eqref{eq:tt2}. In order to get an $L^2$ estimate on this system, the following coercivity inequality
\begin{align}\label{eq:est/gg}
    -\int_\Omega 4(T^a)^3 g\Delta g dx \ge - C \|g\|_{L^2(\Omega)}^2,
\end{align}
for any function $g \in H^1(\Omega)$,
is used, which can be shown to hold under the spectral assumption \ref{asA} (see Lemma \ref{lm.g2}). The $L^\infty$ estimate is derived based on the elliptic regularity and the maximum principle for the radiative transport equation.

The spectral assumption \ref{asA} plays a key role in the proof of Theorem \ref{thm.1}. First, it's required to show the well-posedness of the nonlinear Milne problem \eqref{eq:ml1}-\eqref{eq:ml2b} and equations for higher order boundary corrections $(\bar{T}_k,\bar{\psi}_k)$, $k\ge 1$. Second, under this assumption, the exponential decay of $\bar{T}_k,\bar{\psi}_k$, $k\ge 0$ can be shown, which is needed in order to find the boundary condition for the interior approximations. Third, the spectral assumption is used to show the inequality \eqref{eq:est/gg}, which is crucial in order to get the suitable $L^2$ estimate on system \eqref{eq:tt1}-\eqref{eq:tt2} and prove $\mathcal{T}$ is a contraction mapping. Note since $\bar{T}_k=\bar{T}_k(\eta,x')=\bar{T}(x_1/\eps,x')$, $\partial^2_{\eta}\bar{T}_k$ may be of order $1/\eps^2$ and thus the left term of \eqref{eq:est/gg} may be singular as $\eps\to 0$. Thanks to the spectral assumption \ref{asA}, inequality \eqref{eq:est/gg} holds and overcome the above singularity.

Our work also provides another approach to justify the diffusive limit in the well-prepared case, which was already done in \cite{ghattassi2020diffusive} using different methods. Indeed, when the boundary data is well-prepared, $\psi_b=T_b^4$, and no boundary layer exists. In this case, the boundary layer corrections $\bar{T}_k \equiv 0, \bar \psi_k \equiv 0$. We can thus take $T^a=\sum_{k=0}^N \eps^k T_k$, $\psi^a=\sum_{k=0}^N \eps^k \psi_k$. Since $\nabla T^a$ is bounded, inequality \eqref{eq:est/gg} holds obviously. Therefore, Theorem \ref{thm.1} holds and \eqref{eq:thm} implies the convergence of $(T^\eps,\psi^\eps)$ to $(T_0,\psi_0)$, which is the solution to the problem \eqref{eq:e10}-\eqref{eq:e0b}. 

\subsection{Related work}
When the diffusion operator is not considered, system \eqref{eq:1}-\eqref{eq:2} reduces to the linear transport equation $\eps \beta \cdot \nabla U^\eps + U^\eps - \langle U^\eps\rangle/(4\pi) = 0$. As $\eps \to 0$, its solution convergences to $U_0 + \bar{U}_0$, where $U_0=U_0(x)$ is the solution of the Laplacian equation $\Delta U_0=0$ and $\bar{U}_0=\bar{U}_0(x,\beta)$ is a boundary layer correction defined by $\bar{U}_0=\chi(x_1)(\tilde{U}_0 -\tilde{U}_{0,\infty})$ where $\tilde{U}_0=\tilde{U}(\eta,x',\beta)$ is the solution to $\mu\partial_\eta \tilde{U}_0 + \tilde{U}_0 - \langle \tilde{U}_0\rangle/(4\pi)=0$. This convergence was first rigorously proved in \cite{bensoussan1979boundary} whereas the study of this linear Milne problem was done in \cite{bardos1984diffusion}. However, when the boundary is not flat, $\|U^\eps - U_0 - \bar{U}_0\|_{L^\infty}$ is shown to not converge to zero as $\eps\to 0$ in \cite{wu2015geometric}. This can be overcome by introducing a geometric correction $U_0^\eps,\bar{U}_0^\eps$ with considerations of the curvature effects. They proved that $\|U^\eps - U_0^\eps-\bar{U}_0^\eps\|_{L^\infty}$ converges to zero as $\eps\to 0$ in the 2D unit disk \cite{wu2015geometric}, in the annulus \cite{wu2016asymptotic}, in the 2D convex domain \cite{guo2017geometric,wu2020asymptotic2} and in the 3D convex domain \cite{wu2021diffusive}. For more references on the diffusive limit of the linear transport equation, we refer the reader to \cite{li2017validity,wu2020asymptotic2} and references therein.


 When the Laplacian term $\eps^2\Delta T^\eps$ is neglected, the diffusive limit for system \eqref{eq:1}-\eqref{eq:2} has been studied in many works \cite{golse1985b,bardos1987rosseland,bardos1988nonaccretive,clouet1997rosseland,larsen1983asymptotic}. For example in \cite{bardos1987rosseland}, the authors derived the Rosseland approximation on a different radiative transfer equation depending on the frequency variable. The strong convergence of the solution of the radiative transfer equation to the solution of the Rosseland equation for well-prepared boundary data is proved using the so-called Hilbert’s expansion method. Therefore, in \cite{bardos1988nonaccretive}, under some weak hypotheses on the various parameters of the radiative transfer equation, the Rosseland approximation was justified in a weak sense. Moreover, the boundary layer problem for system \eqref{eq:1}-\eqref{eq:2} without the Laplacian term is constructed in \cite{larsen1983asymptotic} and the boundary condition for the limiting system \eqref{eq:e10} is shown to satisfy a mixed Robin boundary condition. Such a method is extended in \cite{klar1998boundary} to construct the boundary layer approximations for system \eqref{eq:1}-\eqref{eq:2}. However, the method only provides the zeroth and first order approximations near the boundary and could not extend to get higher order approximations. Since our estimates \eqref{eq:111}-\eqref{eq:112} need higher order expansions, here we provide a different way to obtain the approximation boundary layer solutions up to any order.

\subsection{Plan of the paper} The paper is organized as follows: In the next section, we construct the composite approximate solutions and show they satisfies the radiative heat transfer system in the perturbative sense, in Lemma \ref{lem.ap}. In section \ref{sec3}, the properties of the approximate solutions are studied and the approximation errors are shown in Theorem \ref{thm-ap}, and inequality \eqref{eq:est/gg} is proved in Lemma \ref{lm.g2}. Finally, the proof of Theorem \ref{thm.1} is given, which consists of showing the linearized stability and nonlinear stability of \eqref{eq:1}-\eqref{eq:2} in the neighborhood of the approximate solution.

\textbf{Notations.}  Throughout the paper, some standard notations are used. The norm $\|\cdot \|_{L^2(\Omega)}$ and $\|\cdot \|_{L^2(\Omega\times\mathbb{S}^2)}$ are defined by 
${\|f\|_{L^2(\Omega)}^2 = \int_\Omega f^2 dx,\,\,\forall f\in L^2(\Omega)}$ and $\|g\|_{L^2(\Omega\times\mathbb{S}^2)}^2 = \int_{\Omega\times\mathbb{S}^2} g^2 d\beta dx,\,\,\,\forall g\in L^2(\Omega\times\mathbb{S}^2)$.
The norm $\|\cdot \|_{L^2(\Gamma_+)}$ and $\|\cdot\|_{L^2(\Gamma_-)}$ are defined respectively by $\|g\|_{L^2(\Gamma_+)}^2= \int_{\Gamma_+}\beta\cdot n(x) g^2  d\beta d\sigma_x$, and $\|g\|_{L^2(\Gamma_-)}^2= \int_{\Gamma_-}|\beta\cdot n(x)| g^2  d\beta d\sigma_x$, where $\sigma_x$ is the surface element.
\section{Asymptotic Analysis}\label{sec2}
In this section, an approximate solution to system \eqref{eq:1}-\eqref{eq:2} is constructed via asymptotic analysis. An interior expansion is first constructed which is valid in the interior of the domain and then boundary layer corrections are constructed accounting for the boundary layer effects. Finally, we combine the results to get a composite approximate solution to system \eqref{eq:1}-\eqref{eq:2}. We here recall the simplifying assumption that the boundary layer only occurs near the bottom $\{x\in \partial \Omega,x_1=0\}$.

\subsection{Interior expansion}
We take the interior expansion to be 
\begin{align}\label{eq:ansatz}
    T^\eps(x) \sim \sum_{k=0}^N \eps^k T_k(x),\quad \psi^\eps(x,\beta) \sim \sum_{k=0}^N \eps^k \psi_k(x,\beta).
\end{align}
Define 
\begin{align}\label{eq:r1r2}
    \mathcal{R}_1(T,\psi) :=\eps^2 \Delta T + \langle \psi - T^4 \rangle,\quad
    \mathcal{R}_2(T,\psi) :=\eps \beta\cdot \nabla \psi + (\psi-T^4).
\end{align}
Plugging \eqref{eq:ansatz} into the above formulas gives 
\begin{align}
    &\mathcal{R}_1\left(\sum_{k=0}^N \eps^k T_k,\sum_{k=0}^N \eps^k \psi_k\right)\nonumber\\
    &\quad = \sum_{k=0}^N \eps^k (\Delta T_{k-2} + \langle \psi_k - \mathcal{C}(T,k)) \rangle + \eps^{N+1} \Delta T_{N-1} + \eps^{N+2} \Delta T_N - \sum_{k=N+1}^{4N}\varepsilon^k \langle \mathcal{C}(T,k)\rangle ,\label{eq:R/1}\\
    &\mathcal{R}_2 \left(\sum_{k=0}^N \eps^k T_k,\sum_{k=0}^N \eps^k \psi_k\right) \nonumber\\
    &\quad = \sum_{k=0}^N \eps^k (\beta\cdot \nabla \psi_{k-1}+\psi_k - \mathcal{C}(T,k)) + \eps^{N+1} \beta\cdot \nabla \psi_{N} - \sum_{k=N+1}^{4N}\varepsilon^k  \mathcal{C}(T,k) ,\label{eq:R/2}
\end{align}
where 
\begin{align}\label{eq:mathcalC}
    \mathcal{C}(T,k) := \sum_{\substack{i+j+l+m=k\\i,j,l,m\ge 0}} T_iT_jT_lT_m,
\end{align}
and $(T_{k}, \psi_k), k<0$ are taken to be zero.

Collecting terms with the same order, we take
\begin{align}
    &\Delta T_{k-2} + \langle \psi_k - \mathcal{C}(T,k) \rangle = 0,\label{eq:o/1}\\
    &\beta\cdot \nabla \psi_{k-1} + \psi_k - \mathcal{C}(T,k) = 0,\label{eq:o/2}
\end{align}
 for any $k=0,\ldots, N$. Therefore from the above two equations we obtain
\begin{align*}
    \Delta T_{k-2} = \langle \beta\cdot \nabla \psi_{k-1} \rangle.
\end{align*}
By \eqref{eq:o/2}, an iterative process on the above equation leads to 
\begin{align*}
    \Delta T_{k-2} &= \langle \beta\cdot \nabla (-\beta\cdot \nabla \psi_{k-2}+\mathcal{C}(T,k-1))\rangle = -\langle \beta\cdot \nabla (\beta\cdot \nabla \psi_{k-2})\rangle \nonumber\\
    &= -\langle (\beta\cdot \nabla)^2 (-\beta\cdot\nabla \psi_{k-3}+\mathcal{C}(T,k-2))\rangle = -\frac{4\pi}{3} \Delta \mathcal{C}(T,k-2)\rangle + \langle (\beta\cdot\nabla)^3\psi_{k-3}\rangle.
\end{align*}
Consequently, equations \eqref{eq:o/1}-\eqref{eq:o/2} can be rewritten as 
\begin{align}
        &\Delta T_k + \frac{4\pi}{3} \Delta \mathcal{C}(T,k) = \langle (\beta\cdot\nabla)^3 \psi_{k-1}\rangle, \label{eq:o/1-1} \\
    &\psi_k = -\beta\cdot \nabla \psi_{k-1} + \mathcal{C}(T,k),\label{eq:o/2-1} 
\end{align}
for any $k=0,\ldots,N$.
 
Note that here $T_k$ is solved by \eqref{eq:o/1-1} and the the solution could be plugged into \eqref{eq:o/2-1} to get $\psi_k$. In addition, for $k=0$, equation \eqref{eq:o/1-1} becomes
\begin{align*}
    \Delta T_0 + \frac{4\pi}{3} \Delta T_0^4 =0,
\end{align*}
which is a nonlinear elliptic equation. For $k\ge 1$, equation \eqref{eq:o/1-1} can be rewritten as 
\begin{align*}
    \Delta T_k + \frac{4\pi}{3} \Delta (4T_0^3 T_k) = - \frac{4\pi}{3} \Delta \mathcal{E}(T,k-1) + \langle (\beta\cdot\nabla)^3 \psi_{k-1}\rangle,
\end{align*}
which is a linear elliptic equation, where 
\begin{align*}
    \mathcal{E}(T,k-1): = \sum_{\substack{i+j+l+m=k\\i,j,l,m\ge 1}} T_iT_jT_lT_m.
\end{align*}
From the above equations, the residuals \eqref{eq:R/1}-\eqref{eq:R/2} is given by
\begin{align*}
    &\mathcal{R}_1\left(\sum_{k=0}^N \eps^k T_k,\sum_{k=0}^N \eps^k \psi_k\right) =  \eps^{N+1} \Delta T_{N-1} + \eps^{N+2} \Delta T_N - \sum_{k=N+1}^{4N}\varepsilon^k \langle \mathcal{C}(T,k)\rangle,\\
    &\mathcal{R}_2\left(\sum_{k=0}^N \eps^k T_k,\sum_{k=0}^N \eps^k \psi_k\right) = \eps^{N+1} \beta\cdot \nabla \psi_{N} - \sum_{k=N+1}^{4N}\varepsilon^k  \mathcal{C}(T,k),
\end{align*}
where the right hand side are both formally of order $\varepsilon^{N+1}$.

\subsection{Boundary layer corrections}

We next find the approximation of system \eqref{eq:1}-\eqref{eq:2} near the boundary. Let $\eta=x_1/\varepsilon$, we take the ansatz 
\begin{align*}
    T^\varepsilon(x) \sim \sum_{k=0}^{N} \varepsilon^k (\bar{T}_k(\eta,x') + T_k(x)),\quad \psi^\varepsilon(x,\beta) \sim \sum_{k=0}^N \varepsilon^k (\bar{\psi}_k(\eta,x',\beta) + \psi_k(x,\beta)),
\end{align*}
where $\bar{T}_k,\bar{\psi}_k$ are the correction terms near the boundary and $(T_k,\psi_k)$ are the interior expansions derived in the previous subsection.
Taking the above ansatz into  \eqref{eq:r1r2} gives
\begin{align}\label{eq:R1p}
    &\mathcal{R}_1\left(\sum_{k=0}^N \eps^k(T_k+\bar{T}_k),\sum_{k=0}^N \eps^k(\psi_k + \bar{\psi}_k)\right)\nonumber\\
    &\quad=\mathcal{R}_1\left(\sum_{k=0}^N \eps^k T_k,\sum_{k=0}^N \eps^k \psi_k\right) - \sum_{k=0}^N \eps^k \langle \mathcal{C}(T+\bar{T},k) - \mathcal{C}(T,k)\rangle - \sum_{k=N+1}^{4N} \eps^k\langle \mathcal{C}(T+\bar{T},k) - \mathcal{C}(T,k)\rangle \nonumber\\
    &\qquad + \varepsilon^{N+1} \Delta_{x'}\bar{T}_{N-1} + \eps^{N+2}\Delta_{x'} \bar{T}_N + \sum_{k=0}^N \eps^k \langle \mathcal{C}(\bar{T}+P,k) - \mathcal{C}(P,k) \rangle\nonumber\\
    &\qquad + \sum_{k=0}^N \eps^k (\partial_\eta^2 \bar{T}_k + \Delta_{x'}\bar{T}_{k-2} + \langle \bar{\psi}_k - \mathcal{C}(\bar{T}+P,k) + \mathcal{C}(P,k) \rangle) .
\end{align}
and 
\begin{align}\label{eq:R2p}
    &\mathcal{R}_2\left(\sum_{k=0}^N \eps^k(T_k+\bar{T}_k),\sum_{k=0}^N \eps^k(\psi_k + \bar{\psi}_k)\right)\nonumber\\
    &\quad=\mathcal{R}_2\left(\sum_{k=0}^N \eps^k T_k,\sum_{k=0}^N \eps^k \psi_k\right) -\sum_{k=0}^N \eps^k (\mathcal{C}(T+\bar{T},k) - \mathcal{C}(T,k)) - \sum_{k=N+1}^{4N} \eps^k( \mathcal{C}(T+\bar{T},k) - \mathcal{C}(T,k)) \nonumber\\
    &\qquad + \varepsilon^{N+1} \beta'\cdot\nabla_{x'} \bar{\psi}_N + \sum_{k=0}^N \eps^k ( \mathcal{C}(\bar{T}+P,k) - \mathcal{C}(P,k) )  \nonumber\\
    &\qquad +\sum_{k=0}^N \eps^k (\mu \partial_\eta \bar{\psi}_k + \beta'\cdot\nabla_{x'}\bar{\psi}_{k-1} +  \bar{\psi}_k - \mathcal{C}(\bar{T}+P,k) + \mathcal{C}(P,k)) .
\end{align}
Here $\mu=\beta_1$, $x'=(x_2,x_3)$, $\beta'=(\beta_2,\beta_3)$, $T_k(0) =T_k(x_1=0)$ and $P_k=P_k(\eta,x'),k=0,\ldots,N$ are the Taylor's expansion of $T_k(\eps \eta,x')$ around $\eta=0$, which is given by 
\begin{align}\label{eq:Pkdef}
    P_k(\eta,x')= \sum_{l=0}^k \frac{\eta^l}{l!}\frac{\partial^l}{\partial x_1^l} T_{k-l}(0,x').
\end{align}

Collecting terms of the same order in \eqref{eq:R1p}-\eqref{eq:R2p}, we take
\begin{align}
    &\partial_\eta^2 \bar{T}_k + \Delta_{x'} \bar{T}_{k-2} + \langle \bar{\psi}_k - \mathcal{C}(\bar{T} + P,k) + \mathcal{C}(P,k)\rangle = 0,\\
    &\mu \partial_\eta \bar{\psi}_k + \beta'\cdot \nabla_{x'}\bar{\psi}_{k-1} + \bar{\psi}_k - \mathcal{C}(\bar{T} + P,k) + \mathcal{C}(P,k) = 0,
\end{align}
 for $k=0,\ldots,N$. Let $\tilde{T}_k=\bar{T}_k + P_k(0)=\bar{T}_k + T_k(0)$ for any $k=0,\ldots,N$, and $\tilde{\psi}_0 = \bar{\psi}_0 + {T}_0^4(0)$ and $\tilde{\psi}_k = \bar{\psi}_k + 4T_0^3(0)T_k(0) $, then 
\begin{align*}
    -(\bar{T}_0+P_0)^4 + P_0^4 = -\tilde{T}_0^4 + T^4(0),
\end{align*}
and for $k\ge 1$,
\begin{align*}
    -4(\bar{T}_0+P_0)^3(\bar{T}_k+P_k) + 4P_0^3P_k 
    &=-4\tilde{T}_0^3\tilde{T}_k + 4T_0^3(0)T_k(0) - 4(\tilde{T}_0^3 - T_0^3(0))(P_k-P_{k}(0)).
\end{align*}
Therefore,
$\tilde{T}_k$, $\tilde{\psi}_k$ satisfies the equations 
\begin{align}
    \partial_\eta^2 \tilde{T}_0 + \langle \tilde{\psi}_0 - \tilde{T}_0^4 \rangle = 0,\\
    \mu \partial_\eta \tilde{\psi}_0 +  \tilde{\psi}_0 - \tilde{T}_0^4  = 0,
\end{align}
and for $k=1,\ldots,N$,
\begin{align}
    &\partial_\eta^2 \tilde{T}_k + \Delta_{x'} \bar{T}_{k-2} + \langle \tilde{\psi}_k - 4 \tilde{T}_0^3\tilde{T}_k\rangle \nonumber\\
    &\quad  + \langle - 4(\tilde{T}_0^3-T_0^3(0))(P_k-P_k(0)) - \mathcal{E}(\bar{T}+{P}(0),k-1) + \mathcal{E}(P(0),k-1) \rangle =0,\label{eq:tilde1}\\
    &\mu \partial_\eta \tilde{\psi}_k + \beta'\cdot\nabla_{x'} \bar{\psi}_{k-1} + \tilde{\psi}_k \nonumber\\
    &\quad - 4 \tilde{T}_0^3\tilde{T}_k -4(\tilde{T}_0^3-T_0^3(0))(P_k-P_k(0)) - \mathcal{E}(\bar{T}+{P}(0),k-1) + \mathcal{E}(P(0),k-1)  = 0.\label{eq:tilde2}
\end{align}
The boundary conditions for the above equations are 
\begin{align}
    &\tilde{T}_0(\eta=0,x') = T_b(0,x'), \quad \text{for any }x' \in \mathbb{T}^2,\label{eq:tildebd1}\\
    &\tilde{\psi}_0(\eta=0,x',\beta) = \psi_b(0,x',\beta),\quad \text{for any } (0,x',\beta) \in \Gamma_-, \label{eq:tildebd2}
\end{align}
and for $k=1,\ldots,N$,
\begin{align}
    &\tilde{T}_k(\eta=0,x') = 0, \quad \text{for any }x' \in \mathbb{T}^2,\label{eq:tildebd3}\\
    &\tilde{\psi}_{k}(\eta=0,x',\beta) = \psi_k(0) - 4T_0^3(0)T_k(0),\quad \text{for any } (0,x',\beta) \in \Gamma_-. \label{eq:tildebd4}
\end{align}
The boundary conditions above are taken to be consistency with boundary conditions \eqref{eq:1b}-\eqref{eq:2b} such that  
\begin{equation}\label{eq:bconsis}
\begin{aligned}
    &\sum_{k=0}^N (\bar{T}_k(0,x')+T_k(0,x')) = T_b(0,x'),\quad \text{for }x'\in \mathbb{T}^2, \\
    &\sum_{k=0}^N (\bar{\psi}_k(0,x',\beta)+\psi_k(0,x',\beta))=\psi_b(0,x',\beta),\quad \text{for} (0,x',\beta)\in \Gamma_-.
\end{aligned}
\end{equation}

\subsection{Construction of the approximate composition solution}
In order to combine the interior expansions and boundary corrections, we use the cut-off function $\chi(x_1)$ defined in \eqref{eq:cutoff} and we also introduc another cut-off function $\chi_0(x_1)$ by 
\begin{align}\label{eq:cutoff0}
    \chi_0(x_1) = \left\{\begin{array}{cl}
        1,&\text{for } 0 \le x_1 \le \frac12\delta,\\
        0,&\text{for } x \ge \frac34\delta,\\
        \in (0,1), &\text{otherwise.}
    \end{array}\right.
\end{align} 
The construction of the composite approximate solution is done via the following procedure.

\smallskip
\emph{{Step 1. Construction of $(\bar{T}_0,\bar{\psi}_0)$ and $(T_0,\psi_0)$.}} We first solve \eqref{eq:tilde1}-\eqref{eq:tilde2} when $k=0$ :
\begin{align}
    \partial_\eta^2 \tilde{T}_0 + \langle \tilde{\psi}_0 - \tilde{T}_0^4 \rangle =0,\label{eq:bT0}\\
    \mu \partial_\eta \tilde{\psi}_0 + \tilde{\psi}_0 - \tilde{T}_0^4 = 0,\label{eq:bpsi0}
\end{align} 
with boundary conditions
\begin{align}
    &\tilde{T}_0(\eta=0,x') = T_b(0,x'),\quad \text{for any }x'\in\mathbb{T}^2,\label{eq:bT0b}\\
    &\tilde{\psi}_0(\eta=0,x',\beta) = \psi_b(0,x',\beta),\quad \text{for any } x'\in \mathbb{T}^2, \beta\in\mathbb{S}^2 \text{ and } \mu=\beta_1>0. \label{eq:bpsi0b}
\end{align}
This is the nonlinear Milne problem \eqref{eq:ml1}-\eqref{eq:ml2b}. It has been shown in \cite{Bounadrylayer2019GHM2} that the above problem has a global weak solution $(\tilde{T}_0,\tilde{\psi}_0) \in L^2_{\rm loc}(\mathbb{R}_+\times \mathbb{T}^2)\cap L^2_{\rm loc}(\mathbb{R}_+\times\mathbb{T}^2\times\mathbb{S}^2)$ and as $\eta\to \infty$, the solution converges to some bounded functions $(\tilde{T}_{0,\infty},\tilde{\psi}_{0,\infty})$ independent of $\eta$.
 We define the lowest order boundary layer correction $(\bar{T}_0,\bar{\psi}_0)$ by
\begin{align}\label{eq:bar0def}
    \bar{T}_0(\eta,x') = \chi(\varepsilon\eta)(\tilde{T}_0(\eta,x') - \tilde{T}_{0,\infty}(x')),\quad \bar{\psi}_0(\eta,x',\beta) = \chi(\eps\eta)(\tilde{\psi}_0(\eta,x',\beta)- \tilde{\psi}_{0,\infty}(x',\beta)).
\end{align}
Note due to the property of the nonlinear Milne problem, we have $\tilde{\psi}_{0,\infty} = \tilde{T}_{0,\infty}^4.$ Here and below, $\bar{T}_k, \bar{\psi}_k$ are redefined by using a cut-off and is different from the same notation in section 2.2.

We next give the leading order of the interior expansion $(T_0,\psi_0)$, which is obtained by solving \eqref{eq:o/1-1}-\eqref{eq:o/2-1} for $k=0$:
\begin{align}\label{eq:ot0}
    \Delta T_0 + \frac{4\pi}{3} \Delta T_0^4 = 0,\quad \psi_0 = T_0^4
\end{align}
with boundary conditions 
\begin{align}\label{eq:ot0b}
    T_0(0,x') = \tilde{T}_{0,\infty}(x'),\quad T_0(1,x') = T_b(1,x'),\quad \text{for any }x'\in\mathbb{T}^2.
\end{align}

\emph{{Step 2. Construction of $(\bar{T}_k,\bar{\psi}_k)$ and $(T_k,\psi_k)$ for $k=1,\ldots,N$.}} We solve \eqref{eq:tilde1}-\eqref{eq:tilde2} for $1\le k \le N$:
\begin{align}
    &\partial_\eta^2 \tilde{T}_k + \langle \tilde{\psi}_k - 4\tilde{T}_0^3 \tilde{T}_k\rangle = -\chi_0 \Delta_{x'} \bar{T}_{k-2} + \langle 4(\tilde{T}_0^3-\tilde{T}_{0,\infty}^3)(P_k-P_k(0)) \rangle \nonumber\\
    &\quad +\langle \mathcal{E}(\bar{T}+P,k-1) - \mathcal{E}(P,k-1) \rangle , \label{eq:bTk}\\
    &\mu \partial_\eta \tilde{\psi}_k +  \tilde{\psi}_k - 4\tilde{T}_0^3 \tilde{T}_k = -\chi_0\beta'\cdot\nabla_{x'} \bar{\psi}_{k-1}  + 4(\tilde{T}_0^3-\tilde{T}_{0,\infty}^3)(P_k-P_k(0)) \nonumber\\
    &\quad+ (\mathcal{E}(\bar{T}+P,k-1) - \mathcal{E}(P,k-1)), \label{eq:bpsik}
\end{align}
with boundary conditions
\begin{align}
    &\tilde{T}_k(\eta=0,x') = 0,\quad \text{for any }x'\in\mathbb{T}^2,\label{eq:bTkb}\\
    &\tilde{\psi}_k(\eta=0,x',\beta) =  \beta\cdot\nabla \psi_{k-1}(0) - \mathcal{E}(T(0),k-1),\quad \text{for any } x'\in \mathbb{T}^2, \beta\in\mathbb{S}^2 \text{ and } \mu=\beta_1>0. \label{eq:bpsikb}
\end{align}
The above problem was studied in \cite{Bounadrylayer2019GHM2} where existence and uniqueness of solutions are proved and the solutions are also shown to converge to some bounded functions as $\eta\to\infty$.
We thus obtain $(\bar{T}_k,\bar{\psi}_k)$ by 
\begin{align}\label{eq:barkdef}
    \bar{T}_k(\eta,x') = \chi(\varepsilon\eta)(\tilde{T}_k(\eta,x') - \tilde{T}_{k,\infty}(x')),\quad \bar{\psi}_0(\eta,x',\beta) = \chi(\eps\eta)(\tilde{\psi}_k(\eta,x',\beta)- \tilde{\psi}_{k,\infty}(x',\beta))
\end{align}
where 
\[
\tilde{T}_{k,\infty} (x')= \lim_{\eta\to\infty} \tilde{T}_k(\eta,x'),\quad \tilde{\psi}_{k,\infty}(x',\beta) =\lim_{\eta\to\infty} \tilde{\psi}_k(\eta,x',\beta).
\]
The interior expansions $(T_k,\psi_k)$ are given, according to \eqref{eq:o/1-1}-\eqref{eq:o/2-1}, by solving the system 
\begin{align}
    &\Delta T_k + \frac{4\pi}{3} \Delta (4T_0^3 T_k) = \langle (\beta\cdot\nabla)^3 \psi_{k-3} \rangle - \frac{4\pi}{3} \Delta \mathcal{E}(T,k-1), \label{eq:otk}\\
    &\psi_k = -\beta\cdot\nabla \psi_{k-1} + 4T_0^3 T_k + \mathcal{E}(T,k-1) \label{eq:opsik}
\end{align}
with boundary conditions 
\begin{align}\label{eq:otkb}
    T_k(0,x') = \tilde{T}_{k,\infty}(x'),\quad T_k(1,x') = T_b(1,x'),\quad \text{for any }x'\in\mathbb{T}^2.
\end{align}

\emph{{Step 3: The composite approximate solution.}} 
With the above results, the composite approximate solution is given by
\begin{align}\label{eq:tapsia}
    T^a = \sum_{k=0}^N \eps^k (T_k + \bar{T}_k),\quad \psi^a = \sum_{k=0}^N \eps^k (\psi_k + \bar{\psi}_k).
\end{align}
From \eqref{eq:bconsis}, $(T^a,\psi^a)$ also satisfies the boundary conditions \eqref{eq:1b}-\eqref{eq:2b}.

\subsection{Error of the composite expansion}
In this section, we give the approximation errors. By the definition of $(\bar{T}_0,\bar{\psi}_0)$ of \eqref{eq:bar0def} and equations \eqref{eq:bT0}-\eqref{eq:bpsi0}, as well as the relation \eqref{eq:ot0b} and $\tilde{\psi}_{0,\infty} = \tilde{T}_{0,\infty}^4$, a direct computation gives
\begin{align}\label{eq:e00}
    E_0^0:&=\partial_\eta^2 \bar{T}_0 + \langle \bar{\psi}_0 - (\bar{T}_0+T_0(0))^4 + T_0^4(0) \rangle \nonumber\\
    & = (\tilde{T}_0-\tilde{T}_{0,\infty})\partial_\eta^2\chi + 2\partial_\eta\chi \partial_\eta \tilde{T}_0  + \chi \langle \tilde{T}_0^4-\tilde{T}_{0,\infty}^4\rangle - \langle (\chi(\tilde{T}_0-\tilde{T}_{0,\infty}) + \tilde{T}_{0,\infty})^4 - \tilde{T}_{0,\infty}^4 \rangle,
\end{align}
and 
\begin{align}\label{eq:e01}
    E_0^1:&=\mu \partial_\eta \bar{\psi}_0 + \bar{\psi}_0 - (\bar{T}_0+T_0(0))^4 + T_0^4(0) \nonumber\\
    &= \mu (\tilde{\psi}_0-\tilde{\psi}_{0,\infty}) \partial_\eta \chi  +  \chi (\tilde{T}_0^4-\tilde{T}_{0,\infty}^4) - ( (\chi(\tilde{T}_0-\tilde{T}_{0,\infty}) + \tilde{T}_{0,\infty})^4 - \tilde{T}_{0,\infty}^4 ),
\end{align}
Using the definition of $(\bar{T}_k,\bar{\psi}_k)$ in \eqref{eq:barkdef} and equations \eqref{eq:bTk}-\eqref{eq:bpsik} as well as the relation \eqref{eq:bTkb}-\eqref{eq:bpsikb} and $\tilde{\psi}_{k,\infty} = 4\tilde{T}_{0,\infty}^3\tilde{T}_{k,\infty}$, we get 
\begin{align}\label{eq:ek0}
    E_k^0:&=\partial_\eta^2 \bar{T}_k + \Delta_{x'}\bar{T}_{k-2} + \langle \bar{\psi}_k - \mathcal{C}(\bar{T}+P,k) + \mathcal{C}(P,k) \rangle \nonumber\\
    & = (\tilde{T}_{k}-\tilde{T}_{k,\infty})\partial_\eta^2 \chi + 2 \partial_\eta\chi \partial_\eta \tilde{T}_k + \chi(\langle 4\tilde{T}_0^3\tilde{T}_k - 4\tilde{T}_{0,\infty}^3\tilde{T}_{k,\infty} +   4(\tilde{T}_0^3-\tilde{T}_{0,\infty}^3)(P_k-P_k(0)) \rangle\nonumber\\
    &\quad+\langle\mathcal{E}(\bar{T}+P,k-1) - \mathcal{E}(P,k-1) \rangle)  -  \langle \mathcal{C}(\bar{T}+P,k) - \mathcal{C}(P,k)\rangle + (1-\chi\chi_0)\Delta_{x'}\bar{T}_{k-2},
\end{align}
and 
\begin{align}\label{eq:ek1}
    E_{k}^1:&=\mu \partial_\eta \bar{\psi}_k + \beta'\cdot\nabla_{x'}\bar{\psi}_{k-1} +  \bar{\psi}_k - \mathcal{C}(\bar{T}+P,k) + \mathcal{C}(P,k) \nonumber\\
    & = \mu(\tilde{\psi}_k-\tilde{\psi}_{k,\infty})\partial_\eta \chi + \chi(4\tilde{T}_0^3\tilde{T}_k - 4\tilde{T}_{0,\infty}^3\tilde{T}_{k,\infty} + 4(\tilde{T}_0^3-\tilde{T}_{0,\infty}^3)(P_k-P_k(0)) \nonumber\\
    &\quad +  \mathcal{E}(\bar{T}+P,k-1) - \mathcal{E}(P,k-1) ) -  \langle \mathcal{C}(\bar{T}+P,k) - \mathcal{C}(P,k)\rangle + (1-\chi\chi_0)\beta'\cdot \nabla_{x'}\bar{\psi}_{k-1}.
\end{align}
Combining  formulas \eqref{eq:R1p} and \eqref{eq:R2p}, we get from the above equations  
\begin{align}\label{eq:R1b}
    \mathcal{R}_1(T^a,\psi^a)&=  \eps^{N+1} \Delta T_{N-1} + \eps^{N+2} \Delta T_N   - \sum_{k=0}^N \eps^k \langle \mathcal{C}(T+\bar{T},k) - \mathcal{C}(T,k) - \mathcal{C}(\bar{T}+P,k) + \mathcal{C}(P,k)\rangle \nonumber\\
    &\quad- \sum_{k=N+1}^{4N} \eps^k\langle \mathcal{C}(T+\bar{T},k) \rangle  + \varepsilon^{N+1} \Delta_{x'}\bar{T}_{N-1} + \eps^{N+2}\Delta_{x'} \bar{T}_N+ \sum_{k=0}^N \eps^k E_k^0,
\end{align}
and 
\begin{align}\label{eq:R2b}
    \mathcal{R}_2(T^a,\psi^a) &=  \eps^{N+1} \beta\cdot \nabla \psi_{N}  - \sum_{k=0}^N \eps^k ( \mathcal{C}(T+\bar{T},k) - \mathcal{C}(T,k) - \mathcal{C}(\bar{T}+P,k) + \mathcal{C}(P,k)) \nonumber\\
    &\quad- \sum_{k=N+1}^{4N} \eps^k(\mathcal{C}(T+\bar{T},k))  + \varepsilon^{N+1} \beta'\cdot\nabla_{x'} \bar{\psi}_N + \sum_{k=0}^N \eps^k E_k^1.
\end{align}

Therefore, we summarize the above results in the following lemma.
\begin{lemma} \label{lem.ap}
    Let $(T^a, \psi^a)$ be given by \eqref{eq:tapsia}. Then $(T^a,\psi^a)$ satisfies the equations \eqref{eq:1a}-\eqref{eq:2a}
    with boundary conditions \eqref{eq:1ab}-\eqref{eq:2ab},
    where $\mathcal{R}_1(T^a,\psi^a)$, $\mathcal{R}_2(T^a,\psi^a)$ are given by \eqref{eq:R1b} and \eqref{eq:R2b}, respectively. 
\end{lemma}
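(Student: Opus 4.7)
The plan is to verify Lemma~\ref{lem.ap} by direct substitution of the composite ansatz into the operators $\mathcal R_1, \mathcal R_2$ defined in \eqref{eq:r1r2}, and reading off the residuals. Since the lemma is essentially the synthesis of all the asymptotic computations performed in Section~2, the proof should be a matter of careful bookkeeping rather than any new estimate. Concretely, I would start from the identities \eqref{eq:R1p}--\eqref{eq:R2p}, which already split $\mathcal R_i(T^a,\psi^a)$ into (a) the pure interior residual $\mathcal R_i\bigl(\sum \eps^k T_k,\sum \eps^k \psi_k\bigr)$, (b) cross terms involving $\mathcal C(T+\bar T,k)-\mathcal C(T,k)-\mathcal C(\bar T+P,k)+\mathcal C(P,k)$ and high-order $\mathcal C$ contributions, (c) the two extraneous $\eps^{N+1}, \eps^{N+2}$ transverse-Laplacian/transport terms, and (d) the sum $\sum_{k=0}^N \eps^k E_k^i$ gathering all boundary-layer-equation contributions.

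Next, I would handle each piece in turn. For (a), the interior equations \eqref{eq:o/1-1}--\eqref{eq:o/2-1} (collected by matching powers of $\eps$ in the expansion of $\mathcal R_i(\sum\eps^k T_k,\sum\eps^k \psi_k)$) reduce the interior residual precisely to the first two and the last sum of the right-hand side of \eqref{eq:R1b} (respectively \eqref{eq:R2b}), so this contribution is entirely identified. For (d), I would insert $\bar T_k = \chi(\tilde T_k - \tilde T_{k,\infty})$ and $\bar\psi_k = \chi(\tilde\psi_k - \tilde\psi_{k,\infty})$ from \eqref{eq:bar0def},\eqref{eq:barkdef} into the defining expressions of $E_k^0, E_k^1$, differentiate using the product rule, and use the fact that the bulk $\tilde T_k,\tilde \psi_k$ solve \eqref{eq:bT0}--\eqref{eq:bpsi0} for $k=0$ and \eqref{eq:bTk}--\eqref{eq:bpsik} for $k\ge 1$. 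The terms $\partial_\eta^2 \chi$, $\partial_\eta \chi$ and the mismatch between $\mathcal C(\bar T+P,k)$ and $\chi\cdot(\text{nonlinear bulk})$ around the asymptotic limits are what produce the formulas \eqref{eq:e00}--\eqref{eq:ek1}. Care is needed because the boundary-layer equations are cut off by $\chi_0$ on their right-hand side in \eqref{eq:bTk}--\eqref{eq:bpsik}: that is why the $(1-\chi\chi_0)\Delta_{x'}\bar T_{k-2}$ and $(1-\chi\chi_0)\beta'\cdot\nabla_{x'}\bar\psi_{k-1}$ leftover terms appear in \eqref{eq:ek0}--\eqref{eq:ek1}. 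I would check this matching by expanding $\mathcal C(\bar T+P,k)-\mathcal C(P,k)$ around the asymptotic limits $\tilde T_{0,\infty}$ to recover the linearization $4\tilde T_0^3 \tilde T_k$ that appears in the boundary-layer equations, exactly as was done in Section~2 to derive \eqref{eq:tilde1}--\eqref{eq:tilde2}.

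Finally, I would verify the boundary conditions \eqref{eq:1ab}--\eqref{eq:2ab}. At $x_1=1$ the cut-off $\chi$ vanishes, so $\bar T_k(\eta=1/\eps,x')=0$ and $\bar\psi_k(\eta=1/\eps,x',\beta)=0$; hence $T^a(1,x')=\sum_k \eps^k T_k(1,x')=T_b(1,x')$ by the interior boundary condition \eqref{eq:otkb} and the assumption $\psi_b=T_b^4$ on the top boundary. At $x_1=0$ the identities \eqref{eq:bconsis}, which were the very requirement used to prescribe the inflow data \eqref{eq:tildebd1}--\eqref{eq:tildebd4} for the Milne problems, give $T^a(0,x')=T_b(0,x')$ and $\psi^a(0,x',\beta)=\psi_b(0,x',\beta)$ on $\Gamma_-$.

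I do not expect a genuine obstacle here: the statement is the deliberate output of the ansatz designed in Section~2, and the lemma simply records the algebraic identity. The most error-prone part is ensuring that the cut-off and the linearization around $\tilde T_{0,\infty}$ combine exactly so that every boundary-layer equation \eqref{eq:bTk}--\eqref{eq:bpsik} cancels against the corresponding contribution inside $E_k^0, E_k^1$, leaving only the commutator-type terms $(\tilde T_k-\tilde T_{k,\infty})\partial_\eta^2\chi$, $2\partial_\eta\chi\,\partial_\eta \tilde T_k$, $\mu(\tilde\psi_k-\tilde\psi_{k,\infty})\partial_\eta\chi$, the $(1-\chi\chi_0)$ factors, and the pointwise quartic mismatch displayed in \eqref{eq:e00}--\eqref{eq:ek1}; tracking the indices across \eqref{eq:R1p}--\eqref{eq:R2p} is the only mildly delicate aspect, and it is handled by the convention that $(T_k,\psi_k,\bar T_k,\bar\psi_k)$ vanish for negative indices.
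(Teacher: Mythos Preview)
Your proposal is correct and follows essentially the same approach as the paper: the lemma is stated as the summary of the computations carried out in Section~2 (the paper writes ``Therefore, we summarize the above results in the following lemma'' immediately after deriving \eqref{eq:R1b}--\eqref{eq:R2b}), and your plan to substitute the ansatz into \eqref{eq:r1r2}, split via \eqref{eq:R1p}--\eqref{eq:R2p}, cancel the order-by-order contributions using the interior equations \eqref{eq:o/1-1}--\eqref{eq:o/2-1} and the boundary-layer equations \eqref{eq:bT0}--\eqref{eq:bpsik}, and then read off the $E_k^i$ commutator terms \eqref{eq:e00}--\eqref{eq:ek1} is exactly how the paper arrives at \eqref{eq:R1b}--\eqref{eq:R2b}. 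Your verification of the boundary conditions via \eqref{eq:bconsis} at $x_1=0$ and the vanishing of $\chi$ at $x_1=1$ is also what the paper intends.
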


\section{Properties of the approximate solutions}\label{sec3}
In this section, we study the properties of the approximate solution $(T^a,\psi^a)$ constructed in the previous section. The properties of the interior approximations are shown first, followed by the properties of the boundary layer approximations. Then, the approximate errors $\mathcal{R}_1(T^a,\psi^a)$ and $\mathcal{R}_2(T^a,\psi^a)$, obtained in the previous section, are computed in Theorem \ref{thm-ap}. Finally, the coercivity inequality \eqref{eq:est/gg} is shown in Lemma \ref{lm.g2}.

\subsection{Interior approximations}
The interior approximation $(T_0,\psi_0)$ is obtained by solving \eqref{eq:ot0} and $(T_k,\psi_k)$, $k=1,\ldots,N$ are obtained by solving \eqref{eq:otk}. For equation \eqref{eq:ot0}, the following lemma holds.
\begin{lemma}\label{lem:o0}
 Assume $\tilde{T}_{0,\infty} \in L^2(\mathbb{T}^2)$ and $T_{b}(1,x') \in L^2(\mathbb{T}^2)$ satisfy $\tilde{T}_{0,\infty}\ge 0$, $T_b(1,x')\ge 0$ for any $x'\in \mathbb{T}^2$. Then, there exists a unique solution $T_0\in C^\infty(\Omega)$ to equation \eqref{eq:ot0} with boundary condition \eqref{eq:ot0b} and the solution satisfies $T_0(x) \ge 0$ for any $x\in\Omega$.
\end{lemma}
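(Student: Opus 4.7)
The natural approach is to convert the nonlinear elliptic equation into Laplace's equation via a pointwise substitution. Define $\Phi : [0,\infty) \to [0,\infty)$ by $\Phi(t) = t + \tfrac{4\pi}{3} t^4$. Since $\Phi'(t) = 1 + \tfrac{16\pi}{3} t^3 \ge 1$ on $[0,\infty)$, $\Phi$ is a smooth strictly increasing bijection onto $[0,\infty)$, and by the inverse function theorem $\Phi^{-1}$ is smooth on $[0,\infty)$. By linearity of $\Delta$, the equation in \eqref{eq:ot0} is equivalent to $\Delta(\Phi(T_0))=0$, so setting $u = \Phi(T_0)$ transforms \eqref{eq:ot0}-\eqref{eq:ot0b} into the linear Dirichlet problem
\begin{equation*}
\Delta u = 0 \text{ in } \Omega, \qquad u(0,x') = \Phi(\tilde{T}_{0,\infty}(x')), \qquad u(1,x') = \Phi(T_b(1,x')).
\end{equation*}

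The plan is then to solve this linear problem and invert the change of variables. On $\Omega = [0,1]\times\mathbb{T}^2$ with flat boundary and periodicity in $x'$, a Fourier expansion $u(x_1,x') = \sum_k a_k(x_1) e^{2\pi i k\cdot x'}$ reduces the problem to the 1D ODE $a_k'' = 4\pi^2|k|^2 a_k$ on $[0,1]$ with prescribed endpoint values, which is solved explicitly (hyperbolic sines for $k\ne 0$, an affine function for $k=0$). This gives existence and uniqueness of $u$. Interior elliptic regularity (the Fourier coefficients decay exponentially away from $\{0,1\}$) yields $u \in C^\infty(\Omega)$ irrespective of the low $L^2$ regularity of the data; the boundary values are attained in the $L^2$ trace sense, which is consistent with the claim that $T_0 \in C^\infty(\Omega)$ only in the open domain.

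Non-negativity of $u$ follows from the weak maximum principle: since the Dirichlet data $\Phi(\tilde{T}_{0,\infty})$ and $\Phi(T_b(1,\cdot))$ are non-negative, so is $u$ on $\Omega$. Defining $T_0 := \Phi^{-1}(u)$ then gives $T_0 \ge 0$ by the range of $\Phi^{-1}$, and $T_0 \in C^\infty(\Omega)$ by composition of smooth maps. Finally, for uniqueness, if $T_0^{(1)}, T_0^{(2)}$ are two non-negative smooth solutions of \eqref{eq:ot0}-\eqref{eq:ot0b}, then $u^{(i)} := \Phi(T_0^{(i)})$ are both harmonic with the same boundary data, so they coincide by uniqueness for the linear Dirichlet problem; injectivity of $\Phi$ on $[0,\infty)$ then forces $T_0^{(1)} = T_0^{(2)}$.

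I do not anticipate any serious obstacle here: the nonlinearity is absorbed entirely into the pointwise change of variables $\Phi$, reducing everything to standard linear elliptic theory plus the maximum principle. The mildest technical point is giving meaning to the boundary values when only $L^2$ data is available, which the Fourier construction handles automatically and is compatible with the $C^\infty(\Omega)$ conclusion claimed in the statement.
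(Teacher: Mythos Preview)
Your proof is correct and follows exactly the same approach as the paper: both introduce the substitution $u = T_0 + \tfrac{4\pi}{3}T_0^4$ to reduce \eqref{eq:ot0} to the Laplace equation $\Delta u = 0$ with the corresponding transformed boundary data, then invoke standard elliptic theory. You supply more detail (the Fourier construction, the maximum principle for non-negativity, and the uniqueness argument via injectivity of $\Phi$), but the core idea is identical.
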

\begin{proof}
    The proof of the above lemma follows directly from elliptic theory. Actually, we may take $u=T_0+4\pi T_0^4/3$ and then $\Delta u=0$ in $\Omega$ and $u(0,x')=\tilde{T}_{0,\infty}(x') + 4\pi \tilde{T}^4_{0,\infty}(x')/3$, $u(1,x')=T_b(1,x')+4\pi T_b^4(1,x')/3$ on the boundary. We have $u \in C^\infty(\Omega)$ and so $T\in C^\infty(\Omega)$.
\end{proof}
We now provide  in the following lemma a well-posedness result for the elliptic equation \eqref{eq:otk}.
\begin{lemma}\label{lem:ok}
Assume $\tilde{T}_{k,\infty} \in L^2(\mathbb{T}^2)$ satisfy $\tilde{T}_{k,\infty}\ge 0$. Given $T_s,1\le s\le k-1$ and $\psi_s,1\le s\le k-1$ satisfy $T_s \in L^2(\Omega)$, $\psi_s\in L^2(\Omega\times\mathbb{S}^2).$ Then there exists a unique solution $T_k \in C^\infty(\Omega)$ to equation \eqref{eq:otk} with boundary conditions \eqref{eq:otkb}.
\end{lemma}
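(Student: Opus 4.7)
The plan is to reduce equation \eqref{eq:otk} to a standard Poisson equation by a simple change of unknown, and then invoke linear elliptic theory, with the $k$-dependence handled by strong induction. The key observation is that, since $\Delta$ is linear, the principal part of \eqref{eq:otk} can be rewritten as
\[
\Delta T_k + \frac{16\pi}{3}\Delta\!\left(T_0^3\,T_k\right) = \Delta\!\left(a\,T_k\right),\qquad a(x):=1+\frac{16\pi}{3}T_0^3(x),
\]
since $\tfrac{4\pi}{3}\cdot 4 = \tfrac{16\pi}{3}$. Introducing the new unknown $u := a\,T_k$, the problem becomes the Poisson problem
\[
\Delta u = g \quad \text{in}\ \Omega,\qquad g:=\bigl\langle(\beta\cdot\nabla)^3 \psi_{k-3}\bigr\rangle - \frac{4\pi}{3}\,\Delta \mathcal{E}(T,k-1),
\]
with Dirichlet data $u(0,x')=a(0,x')\,\tilde T_{k,\infty}(x')$ and $u(1,x')=a(1,x')\,T_b(1,x')$. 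By Lemma \ref{lem:o0}, $T_0\in C^\infty(\Omega)$ and $T_0\ge 0$, so $a\in C^\infty(\Omega)$ and $a\ge 1$; in particular $a$ is bounded away from zero, which is exactly what makes the change of variables $T_k \leftrightarrow u$ invertible and regularity-preserving.

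I would then proceed by strong induction on $k$, using Lemma \ref{lem:o0} as the base case and the recursion \eqref{eq:opsik} to transfer the regularity of $T_s$ to $\psi_s$ (note $\psi_s$ becomes a polynomial in $\beta$ with smooth $x$-coefficients whenever $T_s\in C^\infty$). Under this inductive hypothesis, $\mathcal{E}(T,k-1)$ is a smooth polynomial in $T_1,\dots,T_{k-1}$, and the velocity moment $\langle(\beta\cdot\nabla)^3\psi_{k-3}\rangle$ integrates out the $\beta$-variable and produces a smooth function of $x$, so $g\in C^\infty(\Omega)$. Existence and uniqueness of $u\in H^1(\Omega)$ then follow from Lax--Milgram applied to the Poisson equation on the slab $\Omega=[0,1]\times\mathbb{T}^2$ (smooth top and bottom plates, periodic in $x'$); up-to-the-boundary $C^\infty$ regularity follows from classical Schauder or $H^s$ elliptic estimates once the boundary data are known to be smooth, which is ensured by the regularity of $\tilde T_{k,\infty}$ coming from the Milne analysis and by the assumed smoothness of $T_b$. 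Dividing by $a\ge 1$ returns $T_k=u/a\in C^\infty(\Omega)$, and uniqueness is immediate since two solutions would differ by a harmonic function with zero Dirichlet data.

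The main obstacle is not really analytic --- the Poisson reformulation trivializes the PDE problem --- but rather bookkeeping: one must verify that the source $g$ is well-defined and smooth, which hinges on the inductive regularity of all lower-order $(T_s,\psi_s)$ and, more delicately, on the smoothness of the trace $\tilde T_{k,\infty}$ extracted from the nonlinear boundary-layer problem \eqref{eq:bTk}--\eqref{eq:bpsikb}. The hypothesis ``$\tilde T_{k,\infty}\in L^2(\mathbb{T}^2)$'' in the statement is really shorthand for the stronger regularity produced by the Milne analysis; once that regularity is recorded, the lemma reduces to the Poisson problem above and classical linear elliptic theory completes the argument.
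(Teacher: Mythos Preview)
Your approach is correct and is exactly the one the paper takes: the paper's entire proof is the single sentence ``the proof also follows directly from elliptic theory by taking $u=(1+16\pi T_0^3/3)T_k$,'' which is precisely your substitution $u=aT_k$ with $a=1+\tfrac{16\pi}{3}T_0^3$. Your additional discussion of the inductive regularity of the source term and boundary data simply fills in details the paper leaves implicit.
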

\begin{proof}
    The proof of the above lemma also follows directly from elliptic theory by taking $u=(1+16\pi T_0^3/3)T_k$.
\end{proof}
We finish this part by giving an $L^{\infty}$-estimate of the Taylor expansion  $P_k$ defined by \eqref{eq:Pkdef}.
\begin{lemma}\label{lem:Pdecay}
    Let $T_s$, $s=0,\ldots,N$ be the solution to equation \eqref{eq:ot0} and \eqref{eq:otk}, and $P_s$, $s=0,\ldots,N$ be given by \eqref{eq:Pkdef}. Then  
    \begin{align}
        \|P_{s}(\eta)\|_{L^\infty(\mathbb{T}^2)} \le C(1+\eta^s),\quad \text{for any }s=0,\ldots,N,
    \end{align}
    for some constant $C>0$.
\end{lemma}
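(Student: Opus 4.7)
The estimate is essentially a statement about a polynomial in $\eta$ whose coefficients are smooth boundary traces of the interior approximations, so the plan is to unpack the definition \eqref{eq:Pkdef}, bound each coefficient by the smoothness granted by Lemmas \ref{lem:o0} and \ref{lem:ok}, and then collapse the resulting finite sum into the claimed $C(1+\eta^s)$ form.

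Concretely, first I would invoke Lemmas \ref{lem:o0} and \ref{lem:ok} together with standard elliptic regularity up to the flat boundary $\{x_1=0\}$ for the scalar equations satisfied by $T_0,\ldots,T_N$ (equivalently for the auxiliary functions $u=T_0+\tfrac{4\pi}{3}T_0^4$ and $u=(1+\tfrac{16\pi}{3}T_0^3)T_k$ introduced in the proofs of those lemmas). This yields $T_k\in C^\infty(\overline\Omega)$ for every $k=0,\ldots,N$, so for each $l\in\{0,\ldots,s\}$ the boundary trace $\partial_{x_1}^l T_{s-l}(0,\cdot)$ lies in $C^\infty(\mathbb{T}^2)\subset L^\infty(\mathbb{T}^2)$. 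Setting
\[
    C_l := \frac{1}{l!}\bigl\|\partial_{x_1}^l T_{s-l}(0,\cdot)\bigr\|_{L^\infty(\mathbb{T}^2)}<\infty,\qquad l=0,\ldots,s,
\]
definition \eqref{eq:Pkdef} gives, pointwise in $x'\in\mathbb{T}^2$ and $\eta\ge 0$,
\[
    |P_s(\eta,x')|\;\le\;\sum_{l=0}^s C_l\,\eta^l\;\le\;\Bigl(\max_{0\le l\le s} C_l\Bigr)\sum_{l=0}^s \eta^l.
\]

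To finish, I would use the elementary bound $\sum_{l=0}^s \eta^l\le (s+1)(1+\eta^s)$, valid on $[0,\infty)$ (split $\eta\le 1$, giving $\sum\eta^l\le s+1$, from $\eta\ge 1$, giving $\sum\eta^l\le (s+1)\eta^s$), and set $C:=(s+1)\max_{0\le l\le s}C_l$. Taking the supremum over $x'\in\mathbb{T}^2$ then yields $\|P_s(\eta)\|_{L^\infty(\mathbb{T}^2)}\le C(1+\eta^s)$ as stated. There is no genuine obstacle in this argument; the only point deserving a moment of care is that the $C^\infty$ regularity from Lemmas \ref{lem:o0}--\ref{lem:ok} extends up to the boundary so that the normal derivatives $\partial_{x_1}^l T_{s-l}$ admit bounded traces on $\mathbb{T}^2$, which is routine here because $\partial\Omega$ is flat and the prescribed data are smooth.
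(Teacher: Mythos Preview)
Your proposal is correct and follows essentially the same approach as the paper: invoke the $C^\infty$ regularity of $T_0,\ldots,T_N$ from Lemmas~\ref{lem:o0}--\ref{lem:ok} to bound the coefficients $\partial_{x_1}^l T_{s-l}(0,\cdot)$, then estimate the finite polynomial sum in $\eta$ by $C(1+\eta^s)$. The paper's version is terser (it writes $|P_s|\le C_1\sum_{l=0}^s \eta^l/l!\le C(1+\eta^s)$ directly), while you spell out the split $\eta\lessgtr 1$ and the boundary-regularity point more carefully, but there is no substantive difference.
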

\begin{proof}
    Since $T_s \in C^\infty(\Omega)$, $\partial_{x_1}^lT_s(0)$ are bounded for any $l\ge 0$. Therefore, 
    \begin{align}
        |P_s| &= \left| \sum_{l=0}^s\frac{\eta^l}{l!}\frac{\partial^l}{\partial {x}_1^l} T_{s-l}(0,x')\right| 
        \le C_1\sum_{l=0}^s \frac{\eta^l}{l!} \le C (1+\eta^s),
    \end{align}
    and thus the lemma holds.
\end{proof}
\subsection{Boundary layer approximation}\label{subsecBLA32} 

We now turn our attention to the half-space nonlinear and linear Milne problems \eqref{eq:bT0}-\eqref{eq:bpsi0} and \eqref{eq:bTk}-\eqref{eq:bpsik}. Their analyses have been carried out in \cite{Bounadrylayer2019GHM2}. Here we summarize the results in Lemma \ref{lem:bd0} and \ref{lem:bdk}, for the proof we refer the reader to \cite{Bounadrylayer2019GHM2}. First, we have the following lemma for system \eqref{eq:bT0}-\eqref{eq:bpsi0}.
\begin{lemma}[\cite{Bounadrylayer2019GHM2}]\label{lem:bd0}
    Given $(T_b(0,x'),\psi_b(0,x',\beta)) \in L^\infty(\mathbb{T}^2)\times L^\infty(\Gamma_-)$ satisfying $T_b(0,x')\ge 0$, $\psi_b(0,x',\beta)\ge 0$ for any $x'\in\mathbb{T}^2$ and $\beta\in\mathbb{S}^2$ with $\beta_1\ge 0$.    
    There exists a bounded solution $(\tilde{T}_0,\tilde{\psi}_0) \in L^2_{\rm loc}(\mathbb{R}_+\times \mathbb{T}^2)\times L^2_{\rm loc}(\mathbb{R}_+\times \mathbb{T}^2 \times \mathbb{S}^2)$ to system \eqref{eq:bT0}-\eqref{eq:bpsi0} with boundary conditions \eqref{eq:bT0b}-\eqref{eq:bpsi0b}. Moreover, there exists a constant function $\tilde{T}_{0,\infty} \in L^\infty(\mathbb{T}^2)$ such that 
    \begin{align}\label{eq:b0decay}
        |\tilde{T}_0(\eta,x') - \tilde{T}_{0,\infty}(x')| \le C e^{-\lambda_0 \eta}, \quad |\tilde{\psi}(\eta,x',\beta) - \tilde{T}_{0,\infty}^4(x') | \le C e^{-\lambda_0 \eta},
    \end{align}
    for any $\eta \in [0,\infty)$ and $\beta \in \mathbb{S}^2$. Here $C>0$ are constants depending linearly on the constant $\left(\int_0^1 \mu (\psi_b-T_b^4)^2 d\mu \right)^{\frac12}$ and $\lambda_0$ is any fixed constant in $[0,1)$.

    Furthermore, for sufficiently small value of $\left(\int_0^1 \mu (\psi_b-T_b^4)^2 d\mu \right)^{\frac12}$, the solution $(\tilde{T}_0,\tilde{\psi}_0)$ is unique.
\end{lemma}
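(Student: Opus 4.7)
The plan is to construct the solution via a truncated-domain approximation, extract the far-field constant $\tilde{T}_{0,\infty}$ by a solvability condition, and then derive the exponential decay \eqref{eq:b0decay} through weighted $L^2$ estimates that close under the spectral assumption \ref{asA}.

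First I would truncate the half-line to $[0,L]\times\mathbb{T}^2$ and impose auxiliary conditions at $\eta=L$ (for instance $\partial_\eta \tilde{T}_0(L,x')=0$ together with an absorbing condition $\tilde{\psi}_0(L,x',\beta)=\tilde T_0^4(L,x')$ for $\mu<0$). For fixed $L$, existence on the slab follows from a monotone iteration: freezing the nonlinearity $\tilde T_0^4$ as a source $S$, one solves the linear kinetic transport equation by the method of characteristics, inserts the result into the elliptic equation, and applies sub-/super-solutions together with the maximum principle to obtain nonnegative solutions. The key structural identity, obtained by taking $\langle\cdot\rangle$ of \eqref{eq:bpsi0} and subtracting \eqref{eq:bT0}, is the flux conservation $\partial_\eta(\partial_\eta\tilde T_0-\langle\mu\tilde\psi_0\rangle)=0$, so the radiative flux is constant in $\eta$. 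A bounded limit at infinity forces this constant to vanish, which is precisely the compatibility condition that selects the trace value $\tilde T_{0,\infty}(x')$. Uniform $L^2_{\rm loc}$ estimates, obtained by testing with $\tilde T_0-\tilde T_{0,\infty}$ and $\tilde\psi_0-\tilde T_{0,\infty}^4$, then permit passage to the limit $L\to\infty$ via weak compactness.

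For the exponential decay, I would set $g=\tilde T_0-\tilde T_{0,\infty}$ and $h=\tilde\psi_0-\tilde T_{0,\infty}^4$. The pair $(g,h)$ solves a linearization around the far-field plus a cubic remainder of the form $4\tilde T_0^3 g + O(g^2)$. Multiplying by the weight $e^{2\tau\eta}$ and integrating, the integration by parts produces a potentially dangerous term of the form $\int_0^\infty e^{2\tau\eta}|\partial_\eta(2\tilde T_0^{3/2})|^2 g^2\,d\eta$; the spectral assumption \ref{asA} is precisely the quantitative statement that this term is dominated, with constant $M<1$, by the good weighted-gradient quantity $\int_0^\infty e^{2\tau\eta}(2\tilde T_0^{3/2})^2|\partial_\eta g|^2\,d\eta$. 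This closes the weighted estimate and yields the stated decay at any rate $\lambda_0\in[0,1)$, while the constant $C$ inherits its linear dependence on $(\int_0^1\mu(\psi_b-T_b^4)^2 d\mu)^{1/2}$ from the boundary trace generated by the integration by parts.

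Finally, uniqueness for small boundary data follows by subtracting two solutions with the same data, running the same weighted energy identity on their difference, and exploiting the smallness of $(\int_0^1\mu(\psi_b-T_b^4)^2 d\mu)^{1/2}$ to absorb residual quadratic terms into the coercive leading part. The principal obstacle throughout is the simultaneous treatment of the nonlinear coupling between the elliptic temperature equation and the kinetic transport equation on a half-line whose far-field value is not prescribed but emerges from a solvability condition: the $T^4$ nonlinearity destroys the sign structure of the naive weighted estimate, and the spectral assumption \ref{asA} is the critical structural hypothesis that restores coercivity and thereby drives both the decay proof and the uniqueness argument.
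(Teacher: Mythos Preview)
The paper does not prove this lemma at all: it is quoted verbatim from the companion work \cite{Bounadrylayer2019GHM2}, and the authors explicitly write ``for the proof we refer the reader to \cite{Bounadrylayer2019GHM2}.'' So there is no proof in the paper to compare your proposal against. That said, your outline has a real structural problem worth flagging.

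Your decay argument invokes the spectral assumption \ref{asA}, but the lemma as stated does not assume it, and it cannot be used in the way you describe. First, assumption \ref{asA} is a hypothesis \emph{on the solution $\tilde{T}_0$ itself}; invoking it to establish the existence and decay of that very solution is circular unless wrapped in a continuation or bootstrap scheme you have not indicated. Second, and more concretely, \ref{asA} is stated only for test functions $f$ with $f(0)=0$, whereas your $g=\tilde{T}_0-\tilde{T}_{0,\infty}$ satisfies $g(0)=T_b(0,x')-\tilde{T}_{0,\infty}(x')$, which is nonzero precisely in the ill-prepared case that produces a boundary layer. So the inequality \eqref{eq:sp} does not apply to your $g$.

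The point you are missing is that the \emph{nonlinear} Milne problem has a monotonicity structure the linearized one lacks: since $T\mapsto T^4$ is increasing on $[0,\infty)$, the product $(\tilde{T}_0^4-\tilde{T}_{0,\infty}^4)(\tilde{T}_0-\tilde{T}_{0,\infty})\ge 0$, and the natural energy multiplier is $\tilde{T}_0^4-\tilde{T}_{0,\infty}^4$ rather than $g$. This sign structure is what closes the weighted estimate for Lemma~\ref{lem:bd0} without any spectral hypothesis; the spectral assumption enters only for the \emph{linearized} Milne problem (Lemma~\ref{lem:bdk}), where this monotonicity is lost. Your truncation/compactness scaffolding and the flux-conservation identity are reasonable, but the decay mechanism needs to be reworked along these lines.
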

Note that by the above lemma, $\tilde{\psi}_{0,\infty} : = \lim_{\eta\to 0} \tilde{\psi}_0(\eta) = \tilde{T}_{0,\infty}^4$.

For system \eqref{eq:bTk}-\eqref{eq:bpsik}, the spectral assumption \ref{asA} on $\tilde{T}_0$ is needed in order to show the existence, uniqueness and exponential decay of  solutions. The following lemma holds.
\begin{lemma}[\cite{Bounadrylayer2019GHM2}]\label{lem:bdk}
    Given $\bar{T}_s \in L^2(\mathbb{R}_+;C^2(\mathbb{T}^2))$, $\bar{\psi}_s \in L^2(\mathbb{R}_+;C^2(\mathbb{T}^2\times\mathbb{S}^2))$, $0\le s\le k-1$ such that 
    \begin{align} \label{eq:lm4decay}
        |\bar{T}_s(\eta,x')|\le C e^{-\lambda_{k-1} \eta},\quad |\bar{\psi}_s(\eta,x',\beta)|\le Ce^{-\lambda_{k-1} \eta}
    \end{align}
for some constants $C>0$ and for any $x'\in\mathbb{T}^2$, $\beta\in\mathbb{S}^2$, $\lambda_{k-1}>0$. Given $P_s\in C([0,\infty)\times \mathbb{T}^2)$ satisfying $|P_s|\le C(1+\eta^{k-1})$ for some constant $C>0$ for any $0\le s\le k-1$. Assume $\tilde{T}_0$ satisfies the spectral assumption \ref{asA}. Then there exists a unique bounded solution $(\tilde{T}_k,\tilde{\psi}_k) \in L^2_{\rm loc}(\mathbb{R}_+\times\mathbb{T}^2)\times L^2_{\rm loc}(\mathbb{R}_+\times \mathbb{T}^2\times \mathbb{S}^2)$ to system \eqref{eq:bTk}-\eqref{eq:bpsik} with boundary conditions \eqref{eq:bTkb}-\eqref{eq:bpsikb}. Moreover, there exists constants $(\tilde{T}_{k,\infty}, \tilde{\psi}_{k,\infty}) \in L^\infty(\mathbb{T}^2)\times L^\infty(\mathbb{T}^2\times\mathbb{S}^2)$ such that 
    \begin{align}\label{eq:bkdecay}
        |\tilde{T}_k(\eta,x') - \tilde{T}_{k,\infty}(x')| \le C e^{-\lambda' \eta}, \quad |\tilde{\psi}(\eta,x',\beta) -\tilde{\psi}_{k,\infty}(x',\beta)| \le C e^{-\lambda' \eta},
    \end{align}
    for any constant $0<\lambda'<\lambda_{k-1}$,
 where $C>0$ is a positive constant independent of $k$ and the relation $\tilde{\psi}_{k,\infty} = 4\tilde{T}^3_{0,\infty}\tilde{T}_{k,\infty}$ holds.
    \end{lemma}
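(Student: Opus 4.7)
The plan is to follow the standard Milne-problem strategy: first verify that the forcing terms decay exponentially, then identify the constants at infinity, then prove existence, uniqueness, and decay simultaneously via a weighted $L^2$ energy estimate powered by the spectral assumption \ref{asA}.

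First, I would check that every inhomogeneous term on the right-hand sides of \eqref{eq:bTk}--\eqref{eq:bpsik} decays exponentially in $\eta$. The forcing consists of $\chi_0\Delta_{x'}\bar T_{k-2}$, $\chi_0\beta'\cdot\nabla_{x'}\bar\psi_{k-1}$, the product $4(\tilde T_0^3-\tilde T_{0,\infty}^3)(P_k-P_k(0))$, and the difference $\mathcal E(\bar T+P,k-1)-\mathcal E(P,k-1)$. The first two decay at rate $\lambda_{k-1}$ by the inductive hypothesis \eqref{eq:lm4decay}; the third decays at rate $\lambda_0$ by \eqref{eq:b0decay}, multiplied by a polynomial $|P_k-P_k(0)|\le C(1+\eta^{k-1})\eta$; the fourth is a sum of monomials each containing at least one factor $\bar T_s$ with $0\le s\le k-1$, times polynomially-growing $P$ factors. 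All are therefore bounded by $C_{\lambda'}e^{-\lambda'\eta}$ for any $\lambda'<\min(\lambda_0,\lambda_{k-1})$.

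Next I would identify the limits. Passing $\eta\to\infty$ formally in \eqref{eq:bpsik} (after all forcing has vanished) yields $\tilde\psi_{k,\infty}-4\tilde T_{0,\infty}^3\tilde T_{k,\infty}=0$, which gives the stated relation. The scalar $\tilde T_{k,\infty}(x')$ is then fixed by the solvability of the half-space problem, exactly as the nonlinear limit was fixed in Lemma \ref{lem:bd0}. Subtracting these limits, I set $g=\tilde T_k-\tilde T_{k,\infty}$, $h=\tilde\psi_k-\tilde\psi_{k,\infty}$, so that $(g,h)$ solves the same linear Milne system with exponentially decaying forcing and zero limit at infinity.

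The core of the proof is a weighted energy estimate. I would multiply the elliptic equation for $g$ by $4\tilde T_0^3 g\,e^{2\tau\eta}$ and the kinetic equation by $(h-4\tilde T_0^3 g)e^{2\tau\eta}$, integrate over $(\eta,x',\beta)$, and combine. Standard manipulations yield dissipation $\int_0^\infty\!\!\langle(h-4\tilde T_0^3 g)^2\rangle e^{2\tau\eta}d\eta$ plus a good term $4\int_0^\infty (2\tilde T_0^{3/2})^2|\partial_\eta g|^2 e^{2\tau\eta}d\eta$ arising after two integrations by parts of $-\int 4\tilde T_0^3 g\,\partial_\eta^2 g\,e^{2\tau\eta}$, against a bad term $4\int_0^\infty|\partial_\eta(2\tilde T_0^{3/2})|^2 g^2 e^{2\tau\eta}d\eta$. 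This is exactly the structure controlled by the spectral assumption \ref{asA}, with margin $1-M>0$. Boundary contributions at $\eta=0$ are handled by lifting the inflow data $\psi_k(0)-4T_0^3(0)T_k(0)$ through a smooth, exponentially decaying extension. Existence then follows from a truncation to $\eta\in[0,L]$ (with a dissipative boundary condition at $\eta=L$), application of the above estimate uniformly in $L$, and a weak-limit argument $L\to\infty$; the weight $e^{\tau\eta}$ upgrades weak convergence to strong convergence on compact sets. Uniqueness is immediate from the same estimate applied to the difference of two solutions, and the pointwise decay bound \eqref{eq:bkdecay} is read off from $\|e^{\tau\eta}g\|_{H^1}+\|e^{\tau\eta}h\|_{L^2}<\infty$ together with the transport equation.

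The main obstacle is the coercivity step just described: the linearization around $\tilde T_0$ produces a quadratic form whose positivity is not evident because the coefficient $\tilde T_0^3$ varies and can be degenerate, and the weight $e^{2\tau\eta}$ introduces an additional indefinite term $\tau\partial_\eta(\tilde T_0^3)$ after integration by parts. The spectral assumption \ref{asA} is tailored precisely to absorb both effects, comparing $|\partial_\eta(2\tilde T_0^{3/2})|^2 f^2$ with $(2\tilde T_0^{3/2})^2|\partial_\eta f|^2$ with constant $M<1$; the careful bookkeeping of this cancellation, and the verification that the test-function manipulations really reduce to the form in \eqref{eq:sp}, is the delicate technical heart of the argument.
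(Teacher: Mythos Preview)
Your verification that the forcing terms decay exponentially is exactly what the paper does, and the computations match: the $\chi_0\Delta_{x'}\bar T_{k-2}$ and $\chi_0\beta'\cdot\nabla_{x'}\bar\psi_{k-1}$ terms inherit rate $\lambda_{k-1}$ from the inductive hypothesis, the $(\tilde T_0^3-\tilde T_{0,\infty}^3)(P_k-P_k(0))$ term decays at rate $\lambda_0$ times a polynomial, and the $\mathcal E$ difference is handled by noting that every monomial contains at least one $\bar T_s$. The paper carries this out explicitly, bounding each contribution in the weighted norm $\int_0^\infty e^{2\lambda'\eta}|\cdot|^2\,d\eta$ for $\lambda'<\lambda_{k-1}$.

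The difference is in what happens next. The paper does \emph{not} rederive the weighted energy estimate here: it invokes \cite[Theorem~2]{Bounadrylayer2019GHM2} as a black box, stating that for any sources $S_1,S_2$ with $\int_0^\infty e^{2\lambda'\eta}(S_1^2+\langle S_2^2\rangle)\,d\eta<\infty$, the linear system \eqref{eq:gtmp}--\eqref{eq:phitmp} admits a unique bounded solution with $|g-g_\infty|+|\phi-4\tilde T_{0,\infty}^3 g_\infty|\le Ce^{-\lambda'\eta}$. The proof in the present paper then consists only of identifying $S_1,S_2$ with the right-hand sides of \eqref{eq:bTk}--\eqref{eq:bpsik} and checking the weighted-$L^2$ hypothesis, which is precisely your first paragraph. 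Your remaining three paragraphs---the weighted multiplier argument, the role of the spectral assumption in the coercivity step, the truncation-and-limit existence scheme---are a sketch of the \emph{proof} of that cited theorem, not of the lemma as stated here. What you have written is correct and self-contained, but it reproves the companion-paper result rather than citing it; the paper's route is shorter precisely because the hard analytic work (which you correctly identify as the coercivity reduction to \eqref{eq:sp}) has been exported.
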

\begin{proof}   
    It has been proved in \cite{Bounadrylayer2019GHM2} that given functions $S_1=S_1(\eta,x')$, $S_2=S_2(\eta,x',\beta)$ such that 
    \begin{align}\label{eq:Sbd}
        \int_0^\infty e^{2\lambda' \eta}S_1^2 d\eta, \, \int_0^\infty \int_{\mathbb{S}^2} e^{2\lambda' \eta} S_2^2 d\eta d\beta < \infty
    \end{align}
    are bounded, the following equations 
    \begin{align}
        &\partial_\eta^2 g + \langle \phi - 4\tilde{T}_0^3 g \rangle = S_1,\label{eq:gtmp}\\
        &\mu \partial_\eta \phi + \phi - 4\tilde{T}_0^3 g = S_2,\label{eq:phitmp}
    \end{align}
    with boundary conditions 
    \begin{align*}
        &g(\eta=0,x') = 0,\quad \text{for any }x'\in \mathbb{T}^2,\\
        &\phi(\eta=0,x',\beta) = \phi_b,\quad \text{for any } (x,\beta) \in \Gamma_-,
    \end{align*}
    have a unique bounded solution $(g,\phi) \in L^2_{\rm loc}(\mathbb{R}_+\times\mathbb{T}^2)\times L^2_{\rm loc}(\mathbb{R}_+\times \mathbb{T}^2\times \mathbb{S}^2)$. Moreover, there exists a constant $g_\infty$ such that $|g - g_\infty| \le C e^{-\lambda' \eta}$, $|\phi - 4\tilde{T}_{0,\infty}^3 g_\infty| \le C e^{-\lambda' \eta}$, for the proof of this result we refer to the \cite[Theorem 2]{Bounadrylayer2019GHM2}. Therefore, we deduce that  $\tilde{\psi}_{k,\infty} = 4\tilde{T}^3_{0,\infty}\tilde{T}_{k,\infty}$.
    
    Taking $g=\tilde{T}_k$, $\phi=\tilde{\psi}_k$ and $S_1 = -\chi_0 \Delta_{x'} \bar{T}_{k-2} + \langle 4(\tilde{T}_0^3-\tilde{T}_{0,\infty}^3)(P_k-P_k(0)) \rangle+\langle \mathcal{E}(\bar{T}+P,k-1) - \mathcal{E}(P,k-1) \rangle$, $S_2 =-\chi_0\beta'\cdot\nabla_{x'} \bar{\psi}_{k-1} + 4(\tilde{T}_0^3-\tilde{T}_{0,\infty}^3)(P_k-P_k(0))  + (\mathcal{E}(\bar{T}+P,k-1) - \mathcal{E}(P,k-1))$, $\phi_b= \beta\cdot\nabla \psi_{k-1}(0) - \mathcal{E}(T(0),k-1)$, system \eqref{eq:gtmp}-\eqref{eq:phitmp} becomes system \eqref{eq:bTk}-\eqref{eq:bpsik} with boundary conditions \eqref{eq:bTkb}-\eqref{eq:bpsikb}. Therefore, Lemma \ref{lem:bdk} holds if $S_1$, $S_2$ satisfy \eqref{eq:Sbd}.

    To show \eqref{eq:Sbd}, from the assumption \eqref{eq:lm4decay},
    \begin{align}
        \int_0^\infty e^{2\lambda'\eta} |\chi_0\Delta_{x'}\bar{T}_{k-2}|^2 d\eta \le C \int_0^\infty e^{2\lambda'\eta} e^{-2\lambda_{k-1}\eta} d\eta = \frac{C}{2(\lambda_{k-1}-\lambda')}, \label{eq:lm4/1}\\
        \int_0^\infty e^{2\lambda'\eta} |-\chi_0\beta'\cdot\nabla_{x'}\bar{\psi}_{k-1}|^2 d\eta \le  C \int_0^\infty e^{2\lambda'\eta} e^{-2\lambda_{k-1}\eta} d\eta = \frac{C}{2(\lambda_{k-1}-\lambda')}.\label{eq:lm4/2}
    \end{align}
    Moreover, due to the assumption \eqref{eq:lm4decay} and $|P_s|\le C(1+\eta^k)$, we have 
    \begin{align*}
        | 4(\tilde{T}_0^3-\tilde{T}_{0,\infty}^3)(P_k-P_k(0))| &= |4(\tilde{T}_0-\tilde{T}_{0,\infty}) (P_k-P_{k}(0)| |(\tilde{T}_0^2 + \tilde{T}_{0,\infty}^2)| \le C e^{-\lambda_0 \eta} (1+\eta^k),
    \end{align*}
    and
    \begin{align}
        &|\mathcal{E}(\bar{T}+P,k-1) - \mathcal{E}(P,k-1)| \nonumber\\
        &\quad = \Big| \sum_{\substack{i+j+l+m=k\\i,j,l,m\ge 1}} (\bar{T}_i+P_i) (\bar{T}_j+P_j)(\bar{T}_l+P_l)(\bar{T}_m+P_m) - \sum_{\substack{i+j+l+m=k\\i,j,l,m\ge 1}} P_iP_jP_lP_m\Big| \nonumber\\
        &\quad =\Big|\sum_{\substack{i+j+l+m=k\\i,j,l,m\ge 1}} (\bar{T}_i\bar{T}_j\bar{T}_l\bar{T}_m+3\bar{T}_i P_jP_lP_m + 6\bar{T}_i\bar{T}_jP_lP_m + 3\bar{T}_i\bar{T}_j\bar{T}_lP_m) \Big| \nonumber\\
        &\quad \le C (e^{-4\lambda_{k-1} \eta} + e^{-\lambda_{k-1} \eta} (1+\eta^{k-1})^3 + e^{-2\lambda_{k-1}\eta}(1+\eta^{k-1})^2+e^{-3\lambda_{k-1} \eta}(1+\eta^{k-1})) \nonumber\\
        &\quad \le C e^{-\lambda_{k-1} \eta}(1+\eta^{3k-3}).
    \end{align}
    Therefore, 
    \begin{align}
        \int_0^\infty e^{2\lambda'\eta} | 4(\tilde{T}_0^3-\tilde{T}_{0,\infty}^3)(P_k-P_k(0)) |^2 d\eta &\le C \int_0^\infty e^{2\lambda'\eta} e^{-2\lambda_0\eta}(1+\eta^k) d\eta \nonumber\\
        & = \frac{C}{2(\lambda_0-\lambda')} + \frac{C}{2^{k+1}(\lambda_0-\lambda')^{k+1}}\Gamma(k+1),
    \end{align}
    and
    \begin{align}
        \int_0^\infty e^{2\lambda' \eta} |\mathcal{E}(\bar{T}+P,k-1) - \mathcal{E}(P,k-1)|^2  d\eta &\le C \int_0^\infty e^{2\lambda'\eta} e^{-2\lambda_{k-1} \eta} (1+\eta^{6k-6}) d\eta \nonumber\\
        &= \frac{C}{2(\lambda_{k-1}-\lambda')} + \frac{C}{2^{6k-5}(\lambda_{k-1}-\lambda')^{6k-5}} \Gamma(6k-5),
    \end{align}
    where $\Gamma(n):=(n-1)!$ is the Gamma function. Combining the above inequalities with \eqref{eq:lm4/1}-\eqref{eq:lm4/2} implies \eqref{eq:Sbd} and finishes the proof.
\end{proof}
In the above lemma, the constant $\lambda'$ may vary for different $k$. In order to get a uniform constant, we apply the above lemma iteratively. First, according to \eqref{eq:b0decay} from Lemma \ref{lem:bd0}, $\bar{T}_0=\chi(\tilde{T}_0 - \tilde{T}_{0,\infty})$  and $\bar{\psi}_0=\chi(\tilde{\psi}_0-\tilde{\psi}_{0,\infty})$ satisfy 
\begin{align}
    |\bar{T}_0(\eta,x')| \le C e^{-\lambda_0 \eta}, \quad |\bar{\psi}_0(\eta,x',\beta| \le C e^{-\lambda_0 \eta},\quad \text{for any }x'\in \mathbb{T}^2, \,\beta\in \mathbb{S}^2.
\end{align}
for some $C>0$ and $\lambda>0$. Therefore, with the definition of $P_0$ in \eqref{eq:Pkdef}, we have that $P_0=T_0(0)$ is bounded, i.e. $|P_0|\le C$ for some constant $C>0$. Therefore, the assumptions of Lemma \ref{lem:bdk} with $k=1$ hold. We take $\lambda'=\lambda_0-\eps_0$, with $\eps_0>0$ being a sufficiently small number, in \eqref{eq:bkdecay} and get that for any $x'\in \mathbb{T}^2,\,\beta\in \mathbb{S}^2$,
\begin{align}
    |\tilde{T}_1 (\eta,x')- \tilde{T}_{1,\infty}(x')|\le Ce^{-(\lambda_0-\eps_0)\eta},\quad |\tilde{\psi}_1(\eta,x',\beta) - \tilde{\psi}_{1,\infty}(x',\beta)| \le C e^{-(\lambda_0-\eps_0)\eta}.
\end{align}
With these, $\bar{T}_1=\chi(\tilde{T}_1-\tilde{T}_{1,\infty})$ and $\bar{\psi}_1=\chi(\tilde{\psi}_1-\tilde{\psi}_{1,\infty})$ satisfy 
\begin{align}
    |\bar{T}_1(\eta,x')|\le Ce^{-(\lambda_0-\eps_0)\eta},\quad |\bar{\psi}_k(\eta,x',\beta)| \le Ce^{-(\lambda_0-\eps_0)\eta},\quad \forall ~ (x'\in \mathbb{T}^2,\,\beta\in \mathbb{S}^2).
\end{align}
Moreover, from Lemma \ref{lem:Pdecay}, $|P_1| \le C(1+\eta)$.
Therefore, the assumption of Lemma \ref{lem:bdk} with $k=2$ hold with $\lambda_1=\lambda_0-\eps_0$. We take $\lambda'=\lambda_1-\eps_0/4$ in \eqref{eq:bkdecay} and get that for any $x'\in \mathbb{T}^2,\,\beta\in \mathbb{S}^2$,
\begin{align}
    |\tilde{T}_2 (\eta,x')- \tilde{T}_{2,\infty}(x')|\le Ce^{-(\lambda_1-\eps_0/4)\eta},~ |\tilde{\psi}_2(\eta,x',\beta) - \tilde{\psi}_{2,\infty}(x',\beta)| \le C e^{-(\lambda_1-\eps_0/4)\eta}.
\end{align}
These again implies $|\bar{T}_2(\eta,x')|,|\bar{\psi}_2(\eta,x',\beta)|\le Ce^{-(\lambda_2-\eps_0/8)}$ with $\lambda_2=\lambda_1-\eps_0/4$ if we take $\lambda'=\lambda_2-\eps_0/9$. We can thus apply Lemma \ref{lem:bdk} iteratively with $\lambda'=\lambda_{s-1} - \eps_0/s^2$ and $\lambda_s=\lambda_{s-1} - \eps_0/(s-1)^2$ in the $s$-th step and get that 
\begin{align}
    |\tilde{T}_s (\eta,x')- \tilde{T}_{s,\infty}(x')|\le Ce^{-(\lambda_{s-1}-\eps_0/s^2)\eta},~ |\tilde{\psi}_k(\eta,x',\beta) - \tilde{\psi}_{2,\infty}(x',\beta)| \le C e^{-(\lambda_s-\eps_0/s^2)\eta}, 
\end{align}
hold for any $0\le s \le N$ for all $x'\in\mathbb{T}^2$, $\beta\in \mathbb{S}^2$. By $\bar{T}_s=\chi(\tilde{T}_s-\tilde{T}_{s,\infty})$ and $\bar{\psi}_s=\chi(\tilde{\psi}_s-\tilde{\psi}_{s,\infty})$, the above inequalities imply for any $s=0,\ldots,N$,
\begin{align}
    |\bar{T}_s(\eta,x')| \le Ce^{-(\lambda_{s-1}-\eps_0/s^2)\eta},\quad |\bar{\psi}_s(\eta,x')| \le Ce^{-(\lambda_{s-1}-\eps_0/s^2)\eta},\quad  \forall ~ (x'\in \mathbb{T}^2,\,\beta\in \mathbb{S}^2).
\end{align}
Since $\lambda_{s}=\lambda_{s-1}-\eps_0/(s-1)^2$, 
\begin{align}
    \lambda_{N-1} - \eps_0/N^2 = \lambda_{N-2}-\eps_0/(N-1)^2  - \eps_0/N^2 = \cdots = \lambda_0 - \eps_0\left(1+\frac{1}{2^2}+\cdots+\frac{1}{N^2}\right) .
\end{align}
From the formula $\sum_{n=1}^\infty \frac{1}{n^2} = \pi^2/6$, the above equation implies 
\begin{align}
   \lambda_{N-1} - \eps_0/N^2  = \lambda_0 - \eps_0\sum_{n=1}^N \frac{1}{N^2} \le \lambda_0 - \eps_0\sum_{n=1}^\infty \frac{1}{n^2} \le \lambda_0 - \frac{\pi^2}{6} \eps_0.
\end{align}
Therefore, taking $\eps_0=3\lambda_0/\pi^2$ and setting $\lambda= \lambda_0/2$, the following lemma holds. 
\begin{lemma}\label{lem:bdecay}
    Let $\Big\{\left(\tilde{T}_s,\tilde{\psi}_s\right)\Big\}_{0\le s \le N}$ be solutions to system \eqref{eq:bT0}-\eqref{eq:bpsi0} for $s=0$ and \eqref{eq:bTk}-\eqref{eq:bpsik} for $s\ge 1$. Then there exist constants $\lambda>0$ and $C>0$ such that for any $s=0,\ldots,N$,
    \begin{equation}\label{eq:bdecay}
    \begin{aligned}
   &|\tilde{T}_s(\eta,x') - \tilde{T}_{s,\infty}(x')|,\,|\tilde{\psi}_s(\eta,x',\beta)-\tilde{\psi}_{s,\infty}(x',\beta)| \le C e^{-\lambda\eta},\\
    &|\bar{T}_s(\eta,x')|, \, |\bar{\psi}_s(\eta,x',\beta)| \le C e^{-\lambda\eta},\quad \text{for any } (x',\beta)\in\mathbb{T}^2\times\mathbb{S}^2,
\end{aligned}
\end{equation}
where $C>0$ is a postive constant independent of $s$.
\end{lemma}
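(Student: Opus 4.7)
The plan is to prove the lemma by induction on $s$, where the base case is handled by Lemma \ref{lem:bd0} and the inductive step by Lemma \ref{lem:bdk}. The key quantitative issue is that Lemma \ref{lem:bdk} only gives decay at rate $\lambda' < \lambda_{s-1}$, so a direct induction would degrade the rate to zero after finitely many steps; the strategy is to lose only $O(1/s^2)$ at each step, so that the cumulative loss is controlled by the convergent series $\sum 1/s^2 = \pi^2/6$.

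\textbf{Step 1 (Base case).} For $s=0$, Lemma \ref{lem:bd0} directly gives \eqref{eq:b0decay}, i.e.\ $|\tilde{T}_0 - \tilde{T}_{0,\infty}|, |\tilde{\psi}_0 - \tilde{\psi}_{0,\infty}| \le C e^{-\lambda_0\eta}$ for any fixed $\lambda_0 \in [0,1)$. Multiplying by the cutoff $\chi$ and noting that $\chi$ is supported in $[0, 3\delta/8]$ and bounded, the same exponential bound (with a possibly larger constant $C$) holds for $\bar{T}_0, \bar{\psi}_0$. I fix $\lambda_0 \in (0,1)$ once and for all.

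\textbf{Step 2 (Inductive step).} Suppose that for all $0 \le j \le s-1$ we have bounds $|\bar{T}_j|, |\bar{\psi}_j| \le C e^{-\lambda_{s-1}\eta}$ for some positive constant $\lambda_{s-1} > 0$. By Lemma \ref{lem:Pdecay}, $|P_j| \le C(1+\eta^j)$. These are exactly the hypotheses of Lemma \ref{lem:bdk} at level $k=s$, so we obtain a unique solution $(\tilde{T}_s, \tilde{\psi}_s)$ together with limits $(\tilde{T}_{s,\infty}, \tilde{\psi}_{s,\infty})$ satisfying
\begin{equation*}
|\tilde{T}_s - \tilde{T}_{s,\infty}|, \; |\tilde{\psi}_s - \tilde{\psi}_{s,\infty}| \le C e^{-\lambda'\eta}
\end{equation*}
for any $0 < \lambda' < \lambda_{s-1}$. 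The point is that in the proof of Lemma \ref{lem:bdk}, the $\eta^k$ factors coming from $P_k$ and from $\mathcal{E}(\bar{T}+P,k-1) - \mathcal{E}(P,k-1)$ are absorbed by the exponential weight, producing constants of the form $C/(\lambda_{s-1}-\lambda')^{m}$. Thus the bound is nontrivial as long as $\lambda_{s-1} - \lambda' > 0$, no matter how small. I choose
\begin{equation*}
\lambda_s := \lambda_{s-1} - \frac{\eps_0}{s^2}, \qquad \lambda' := \lambda_s,
\end{equation*}
so that the resulting $\bar{T}_s, \bar{\psi}_s$ decay at rate $\lambda_s$.

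\textbf{Step 3 (Summing the losses).} Telescoping the recursion $\lambda_s = \lambda_{s-1} - \eps_0/s^2$ yields
\begin{equation*}
\lambda_N \ge \lambda_0 - \eps_0 \sum_{n=1}^{\infty} \frac{1}{n^2} = \lambda_0 - \frac{\pi^2}{6}\eps_0.
\end{equation*}
Choosing $\eps_0 = 3\lambda_0/\pi^2$ guarantees $\lambda_N \ge \lambda_0/2 =: \lambda > 0$. Since $s \le N$ is finite, the constants $C$ appearing at each step (depending on $1/(\lambda_{s-1}-\lambda_s)^m \sim s^{2m}$ and on $\Gamma(6s-5)$) remain bounded by a single constant $C$, and the exponential decay rate is uniformly $\lambda$ for all $s = 0, \ldots, N$. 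The bounds on $\bar{T}_s, \bar{\psi}_s$ follow from those on $\tilde{T}_s - \tilde{T}_{s,\infty}$ and $\tilde{\psi}_s - \tilde{\psi}_{s,\infty}$ by definition \eqref{eq:barkdef}.

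\textbf{Main obstacle.} The conceptual difficulty is already handled by Lemma \ref{lem:bdk}: verifying that the source terms $S_1, S_2$ satisfy the exponential integrability \eqref{eq:Sbd} even though $P_k$ grows polynomially in $\eta$. Once that is in hand, the only subtle point in the present lemma is the bookkeeping of the rates $\lambda_s$, which requires the summability trick $\sum 1/s^2 < \infty$ to avoid a linear-in-$s$ rate deterioration. Everything else is a straightforward induction.
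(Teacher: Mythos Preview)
Your proposal is correct and follows essentially the same approach as the paper: induct on $s$ using Lemmas \ref{lem:bd0} and \ref{lem:bdk}, lose $\eps_0/s^2$ in the decay rate at each step, and sum the losses via $\sum 1/n^2=\pi^2/6$ to conclude with $\eps_0=3\lambda_0/\pi^2$ and $\lambda=\lambda_0/2$. The paper's indexing of $\lambda_s$ versus $\lambda'$ is slightly different from yours, but this is cosmetic and the argument is the same.
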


\subsection{Residual Estimates}
Next  we estimate $\mathcal{R}_1(T^a,\psi^a)$ and $\mathcal{R}_2(T^a,\psi^a)$ given by \eqref{eq:R1b} and \eqref{eq:R2b}, respectively.
We will prove the following theorem.
\begin{theorem}\label{thm-ap}
   Assume $\tilde{T}_0$ satisfies the spectral assumption \ref{asA}. The composite approximate solution $(T^a,\psi^a)$ constructed in section \ref{sec2}, satisfies \eqref{eq:1a}-\eqref{eq:2a} with boundary conditions \eqref{eq:1ab}-\eqref{eq:2ab}. Moreover, the error terms $\mathcal{R}_1=\mathcal{R}_1(T^a,\psi^a), \mathcal{R}_2= \mathcal{R}_2(T^a,\psi^a)$ satisfy 
    \begin{align}
        &\|\mathcal{R}_1(T^a,\psi^a)\|_{L^2(\Omega)}, ~\|\mathcal{R}_1(T^a,\psi^a)\|_{L^\infty(\Omega)},~
        \|\mathcal{R}_2(T^a,\psi^a)\|_{L^2(\Omega\times\mathbb{S}^2)}, ~\|\mathcal{R}_2(T^a,\psi^a)\|_{L^\infty(\Omega\times\mathbb{S}^2)} \nonumber\\
        &\quad\le        C\gamma_N \eps^{N+1} + (N+2) Ce^{-\frac{\lambda \delta}{4\eps}},
    \end{align}
    for some constant $\gamma_N>1$. Furthermore, for $\delta > -\frac{4}{\lambda}(N+1)\eps \log \eps $, the above estimate implies 
    \begin{align}\label{eq:errest}
        &\|\mathcal{R}_1(T^a,\psi^a)\|_{L^2(\Omega)}, ~\|\mathcal{R}_1(T^a,\psi^a)\|_{L^\infty(\Omega)},~
        \|\mathcal{R}_2(T^a,\psi^a)\|_{L^2(\Omega\times\mathbb{S}^2)}, ~\|\mathcal{R}_2(T^a,\psi^a)\|_{L^\infty(\Omega\times\mathbb{S}^2)} \nonumber\\
        &\quad\le C\eps^{N+1},
    \end{align}
    where $C>0$ is a positive constant independent of $\eps$.
\end{theorem}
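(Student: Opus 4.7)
The plan is to bound each term in the explicit formulas \eqref{eq:R1b}-\eqref{eq:R2b} separately, using three ingredients already at hand: the smoothness of the interior profiles $(T_k,\psi_k)$ from Lemmas \ref{lem:o0}-\ref{lem:ok}, the uniform exponential decay of $\bar T_k,\bar\psi_k$ from Lemma \ref{lem:bdecay}, and the polynomial bound $|P_k|\le C(1+\eta^k)$ from Lemma \ref{lem:Pdecay}. I would split the right-hand sides of \eqref{eq:R1b}-\eqref{eq:R2b} into three categories: the manifestly $O(\eps^{N+1})$ truncation pieces, the cross-matching nonlinear differences, and the boundary-layer correction errors $E_k^0,E_k^1$.

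For the first category---the pieces $\eps^{N+1}\Delta T_{N-1}+\eps^{N+2}\Delta T_N$, $\eps^{N+1}\beta\cdot\nabla\psi_N$, $\eps^{N+1}\Delta_{x'}\bar T_{N-1}+\eps^{N+2}\Delta_{x'}\bar T_N$, and the tail sums $\sum_{k=N+1}^{4N}\eps^k\mathcal{C}(T+\bar T,k)$---each summand is $L^\infty$-bounded by a quantity depending on $N$ but independent of $\eps$, so the prefactor $\eps^{N+1}$ settles the estimate in $L^\infty$; the $L^2$ estimate follows either trivially or, for the summands carrying a factor $\bar T_k$, with an extra $\eps^{1/2}$ gain from the change of variables $x_1=\eps\eta$ in the exponentially decaying integrand.

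The delicate piece is the cross-matching sum $\sum_{k=0}^N\eps^k\langle\mathcal{C}(T+\bar T,k)-\mathcal{C}(T,k)-\mathcal{C}(\bar T+P,k)+\mathcal{C}(P,k)\rangle$. Setting $\mathbf{T}=\sum_{k=0}^N\eps^kT_k(\eps\eta,x')$, $\mathbf{P}=\sum_{k=0}^N\eps^kP_k(\eta,x')$, and $\bar{\mathbf{T}}=\sum_{k=0}^N\eps^k\bar T_k(\eta,x')$, this sum equals $(\mathbf{T}+\bar{\mathbf{T}})^4-\mathbf{T}^4-(\mathbf{P}+\bar{\mathbf{T}})^4+\mathbf{P}^4$ modulo an explicit $O(\eps^{N+1})$ remainder from truncating the convolution sums at $k=N$. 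The elementary identity
\begin{equation*}
(a+b)^4-a^4-(c+b)^4+c^4=b(a-c)\bigl[4(a^2+ac+c^2)+6b(a+c)+4b^2\bigr]
\end{equation*}
factors this quartic expression as $\bar{\mathbf{T}}\,(\mathbf{T}-\mathbf{P})$ times a polynomial of bounded degree in $\mathbf{T},\mathbf{P},\bar{\mathbf{T}}$. Since $P_k$ is by construction the $k$-th $\eps$-Taylor coefficient of $\sum_j\eps^jT_j(\eps\eta,x')$ at $x_1=0$, Taylor's theorem gives $|\mathbf{T}-\mathbf{P}|\le C\eps^{N+1}(1+\eta^{N+1})$ on $\{\eps\eta\le 3\delta/8\}$, the support of $\bar{\mathbf{T}}$. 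The exponential decay $|\bar{\mathbf{T}}|\le Ce^{-\lambda\eta}$ then absorbs both the polynomial factor and the polynomial bracket, producing an $O(\eps^{N+1})$ bound uniform in $\eta$.

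The last category, $E_k^0$ and $E_k^1$ from \eqref{eq:ek0}-\eqref{eq:ek1}, I would decompose into a \emph{transition part} carrying a derivative of $\chi$ or the factor $(1-\chi\chi_0)$, and an \emph{algebraic part} of the form $\chi f(\tilde T-\tilde T_\infty)-f(\chi(\tilde T-\tilde T_\infty)+\tilde T_\infty)+f(\tilde T_\infty)$ with $f(u)=u^4$ or its linearization. The transition part is supported on $\eta\ge\delta/(4\eps)$, so Lemma \ref{lem:bdecay} bounds it directly by $Ce^{-\lambda\delta/(4\eps)}$. The algebraic part vanishes identically whenever $\chi\in\{0,1\}$ and is therefore also supported in the transition region, giving the same exponentially small bound. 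Summing the $N+1$ such contributions produces the factor $N+2$ advertised in the statement, and the hypothesis $\delta>-\tfrac{4}{\lambda}(N+1)\eps\log\eps$ converts $e^{-\lambda\delta/(4\eps)}$ into $\eps^{N+1}$, yielding \eqref{eq:errest}. The hardest part, I expect, will be the algebraic factorization in the cross-matching step: one must identify the precise combinatorial pairing so that a $(\mathbf{T}-\mathbf{P})$ factor cleanly emerges, and then quantify the Taylor remainder with enough uniformity in the indices that the $\eta^{N+1}$ growth is dominated by $e^{-\lambda\eta}$ independently of $N$.
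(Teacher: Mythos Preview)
Your proposal is correct and follows essentially the same approach as the paper. The paper splits $\mathcal{R}_1$ into four pieces $R_{11},\ldots,R_{14}$ corresponding exactly to your three categories (with $R_{11},R_{12}$ forming your first category), uses the same quartic factorization $(a^4-b^4)-(c^4-d^4)=f(a-c)[\cdots]$ with $a-b=c-d=f=\bar{\mathbf T}$ for the cross-matching term, proves the Taylor-remainder identity $\sum_k\eps^k(T_k-P_k)=\eps^{N+1}\sum_k\tfrac{\eta^{N-k+1}}{(N-k+1)!}\partial_{x_1}^{N-k+1}T_k(\xi_k,x')$ explicitly, and decomposes each $E_k^0$ into a transition part and an algebraic part supported on $\{\eps\eta\ge\delta/4\}$ just as you describe. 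The only piece of bookkeeping you have slightly understated is that for $k\ge1$ the algebraic part of $E_k^0$ carries a polynomial factor $(1+\eta^k)$ from the $P_k$ terms, so its bound on the transition region is $C(1+\eps^{-k})e^{-\lambda\delta/(4\eps)}$ rather than $Ce^{-\lambda\delta/(4\eps)}$; this is harmless after multiplying by the prefactor $\eps^k$ in the sum $\sum_{k=0}^N\eps^kE_k^0$.
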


\begin{proof}
    We first consider $\mathcal{R}_1(T^a,\psi^a)$. From \eqref{eq:R1b}, we have
    \begin{align}
        \mathcal{R}_1(T^a,\psi^a) &= (\eps^{N+1} \Delta T_{N-1} + \eps^{N+2} \Delta T_N + \varepsilon^{N+1} \Delta_{x'}\bar{T}_{N-1} + \eps^{N+2}\Delta_{x'} \bar{T}_N) \nonumber\\
        &\quad- \sum_{k=N+1}^{4N} \eps^k\langle \mathcal{C}(T+\bar{T},k) \rangle - \sum_{k=0}^N \eps^k \langle \mathcal{C}(T+\bar{T},k) - \mathcal{C}(T,k) - \mathcal{C}(\bar{T}+P,k) + \mathcal{C}(P,k)\rangle \nonumber\\
        &\quad + \sum_{k=0}^N \eps^k E_k^0 \nonumber\\
        &=: R_{11} + R_{12} + R_{13} + R_{14}.  
    \end{align}
    By Lemma \ref{lem:ok}, $\|\Delta T_{N-1}\|_{L^2(\Omega)}$, $\|\Delta T_{N}\|_{L^2(\Omega)}$, $\|\Delta T_{N-1}\|_{L^\infty(\Omega)}$ and $\|\Delta T_{N}\|_{L^\infty(\Omega)}$ are bounded. Moreover, from Lemma \ref{lem:bdecay},
    \begin{align*}
        \int_0^1 \bar{T}_{N-1}^2 dx_1 &= \int_0^1 \chi^2(x_1)(\tilde{T}_{N-1}(x_1/\eps)-\tilde{T}_{N-1,\infty})^2 dx_1 = \int_0^{1/\eps} \chi^2(\eps\eta)(\tilde{T}_{N-1}(\eta)-\tilde{T}_{N-1,\infty})^2 d\eta \nonumber\\
        &\le \int_0^\infty (\tilde{T}_{N-1}-\tilde{T}_{N-1,\infty})^2 d\eta \le C \int_0^\infty e^{-2\lambda \eta} d\eta = \frac{C}{2\lambda},\nonumber\\
        \int_0^1 \bar{\psi}_{N-1}^2 dx_1 &= \int_0^1 \chi^2(x_1)(\tilde{\psi}_{N-1}(x_1/\eps)-\tilde{\psi}_{N-1,\infty})^2 d\eta \le C \int_0^\infty e^{-2\lambda \eta} d\eta = \frac{C}{2\lambda},
    \end{align*}
    The above inequalities also hold for $(\bar{T}_N,\bar{\psi}_N)$. 
    It follows from the above inequalities and \eqref{eq:bdecay} that  
    $\|\Delta_{x'} \bar{T}_{N-1}\|_{L^2(\Omega)}$, $\|\Delta_{x'} \bar{T}_{N}\|_{L^2(\Omega)}$, $\|\Delta_{x'} \bar{T}_{N-1}\|_{L^\infty(\Omega)}$ and $\|\Delta_{x'} \bar{T}_{N}\|_{L^\infty(\Omega)}$ are bounded. Therefore, 
    \begin{align}\label{eq:R11est}
        \|R_{11}\|_{L^2(\Omega)},~\|R_{11}\|_{L^\infty(\Omega)} \le C\eps^{N+1}.
    \end{align}
    Due to \eqref{eq:bdecay}, $\|\bar{T}_s\|_{L^2(\Omega)}$, $\|\bar{T}_s\|_{L^\infty(\Omega)}$ are bounded for any $s=0,\ldots,N$. Therefore, 
    \begin{align}\label{eq:R12est}
        \|R_{12}\|_{L^2(\Omega)},~\|R_{12}\|_{L^\infty(\Omega)} \le C \eps^{N+1}.
    \end{align}
    Next we consider $R_{13}$. Since $\chi(x_1)=0$ for $x_1\ge \tfrac38\delta$, $R_{13}(x_1,x')=0$ for $x_1\ge \tfrac38\delta$. For the region $x_1\le \tfrac38 \delta$, we rewrite it as 
    \begin{align}
        R_{13} 1_{\{x_1\le \tfrac38\delta\}}& = -\sum_{k=0}^{4N}\eps^k   \langle \mathcal{C}(T+\bar{T},k) - \mathcal{C}(T,k) - \mathcal{C}(\bar{T}+P,k) + \mathcal{C}(P,k)\rangle 1_{\{x_1\le \tfrac38\delta\}} \nonumber\\
        &\quad + \sum_{k=N+1}^{4N}\eps^k \langle \mathcal{C}(T+\bar{T},k) - \mathcal{C}(T,k) - \mathcal{C}(\bar{T}+P,k) + \mathcal{C}(P,k)\rangle 1_{\{x_1\le \tfrac38\delta\}}\nonumber \\
        &=:R_{131} + R_{132}.
    \end{align}
    Similarly as $R_{12}$, $R_{132}$ satisfies 
    \begin{align}\label{eq:R132}
        \|R_{132}\|_{L^2(\Omega)},~\|R_{132}\|_{L^\infty(\Omega)} \le C \eps^{N+1}.
    \end{align}
    For $R_{132}$, from the definition \eqref{eq:mathcalC} of $\mathcal{C}$,
    \begin{align}
        R_{131} &=  -\left\langle  \left(\sum_{k=0}^N \eps^k (T_k+\bar{T}_k)\right)^4 - \left(\sum_{k=0}^N \eps^k T_k\right)^4  - \left(\sum_{k=0}^N \eps^k (P_k+\bar{T}_k)\right)^4 + \left(\sum_{k=0}^N \eps^k P_k\right)^4 \right\rangle.
    \end{align}
    Using the formula $a^4-b^4=(a-b)(a+b)(a^2+b^2)$ and for $a-b=c-d=f$
    \begin{align*}
        (a^4-b^4) - (c^4 - d^4) &=  (a-b)(a+b)(a^2+b^2) - (c-d)(c+d)(c^2+d^2) \nonumber\\
        &=f(a-c)\left(2a^2+2b^2+(c+d)(a+b+c+d)\right),
    \end{align*} 
    with $a=\sum_{k=0}^N\eps^k(T_k+\bar{T}_k)$, $b=\sum_{k=0}^N \eps^k T_k$, $c=\sum_{k=0}^N \eps^k(P_k+\bar{T}_k)$, $d=\sum_{k=0}^N \eps^k P_k$, $f=\sum_{k=0}^N \eps^k \bar{T}_k$, we obtain 
    \begin{align}\label{eq:ttpp}
        &\left(\sum_{k=0}^N \eps^k (T_k+\bar{T}_k)\right)^4 - \left(\sum_{k=0}^N \eps^k T_k\right)^4  - \left(\sum_{k=0}^N \eps^k (P_k+\bar{T}_k)\right)^4 + \left(\sum_{k=0}^N \eps^k P_k\right)^4 \nonumber\\
        &\quad = \left(\sum_{k=0}^N \eps^k \bar{T}_k\right)\left(\sum_{k=0}^N \eps^k (T_k-P_k)\right) \left(2a^2+2b^2+(c+d)(a+b+c+d)\right).
    \end{align}
    Due to \eqref{eq:bdecay}, 
    \begin{align}\label{eq:Tabsdecay}
        \left|\sum_{k=0}^N \eps^k \bar{T}_k\right| \le C e^{-\lambda\eta} = Ce^{-\frac{\lambda x_1}{\eps}}.
    \end{align}
    Taylor's formula yields 
    \begin{align}
        T_k(x_1,x') = \sum_{l=0}^{N-k} \frac{x_1^l}{l!} \partial_{x_1}^l T_l(0,x') + \frac{\partial_{x_1}^{N-k+1}}{(N-k+1)!} T_k(\xi_k,x') x_1^{N-k+1}.
    \end{align}
    Using the above formula, we get 
    \begin{align}\label{eq:T-P}
        \sum_{k=0}^N \eps^k(T_k - P_k) &= \sum_{k=0}^N \eps^k T_k - \sum_{k=0}^N \eps^k P_k \nonumber\\
        &= \sum_{k=0}^N \eps^k \left(\sum_{l=0}^{N-k}\frac{x_1^l}{l!} \partial_{x_1}^l T_k(0,x') + \frac{\partial_{x_1}^{N-k+1} T_k(\xi_k,x')}{(N-k+1)!} x_1^{N-k+1}\right) \nonumber\\
        &\quad - \sum_{k=0}^N \eps^k \sum_{l=0}^k \frac{\eta^l}{l!} \frac{\partial^l}{\partial x_1^l} T_{k-l}(0,x'),
    \end{align}
    with $\xi_k\in [0,x_1]$.
    Using the formula 
    $$ \sum_{k=0}^N \sum_{l=0}^{k} f(l,k)=\sum_{l=0}^N\sum_{k=l}^{N} f(l,k) \stackrel{s=k-l}{=\joinrel=} \sum_{l=0}^N\sum_{s=0}^{N-l} f(l,l+s) \stackrel{l\to k, s\to l}{=\joinrel=} \sum_{k=0}^N\sum_{l=0}^{N-k}f(k,k+l)$$
    and taking $f(k,k+l) = \eps^k\frac{x_1^l}{l!}\partial_{x_1}^l T_k(0,x')$, we get 
    $$f(l,k)=\eps^l \frac{x_1^{k-l}}{(k-l)!}\partial_{x_1}^{k-l}T_l(0,x')= \eps^k \frac{\eta^{k-l}}{(k-l)!} \partial_{x_1}^{k-l}T_l(0,x')$$
    and so 
    \begin{align*}
        \sum_{k=0}^N \sum_{l=0}^{N-k}\eps^k\frac{x_1^l}{l!}\partial_{x_1}^l T_k(0,x')= \sum_{k=0}^N \sum_{l=0}^{k} \eps^k \frac{\eta^{k-l}}{(k-l)!} \partial_{x_1}^{k-l}T_l(0,x') \stackrel{k\to k,k-l \to l}{=\joinrel=} \sum_{k=0}^N \sum_{l=0}^N \eps^k \frac{\eta^l}{l!} \partial_{x_1}^l T_{k-l}(0,x').
    \end{align*}
    Taking this relation into \eqref{eq:T-P} leads to 
    \begin{align*}
        \sum_{k=0}^N \eps^k(T_k-P_k) = \sum_{k=0}^N \eps^k \frac{\partial_{x_1}^{N-k+1} T_k(\xi_k,x')}{(N-k+1)!} x_1^{N-k+1}.
    \end{align*}
  Combining this with \eqref{eq:Tabsdecay}, \eqref{eq:ttpp} satisfies 
    \begin{align*}
        &\left|\left(\sum_{k=0}^N \eps^k (T_k+\bar{T}_k)\right)^4 - \left(\sum_{k=0}^N \eps^k T_k\right)^4  - \left(\sum_{k=0}^N \eps^k (P_k+\bar{T}_k)\right)^4 + \left(\sum_{k=0}^N \eps^k P_k\right)^4 \right| \nonumber\\
        &\quad \le C e^{-\frac{\lambda x_1}{\eps}} \sum_{k=0}^N \eps^k \frac{|\partial_{x_1}^{N-k+1} T_k(\xi_k,x')|}{(N-k+1)!} x_1^{N-k+1} |2a^2+2b^2+(c+d)(a+b+c+d)| \nonumber\\
        &\quad \le C\sum_{k=0}^N \eps^k \frac{1}{(N-k+1)!}x_1^{N-k+1} e^{-\frac{\lambda x_1}{\eps}},
    \end{align*}
    where the fact that $\|a,b,c,d\|_{L^\infty(\Omega)}\le C$ are bounded and $\|T_k\|_{C^s(\Omega)}\le C$ is bounded for any $s\ge 0$. Note that the function $h(x_1):=x_1^{N-k+1} e^{-\lambda x_1/\eps}$ attains its maximum at $x_1^*= (N-k+1)\eps/\lambda$ with the maximum value $h(x^*)=(N-k+1)^{N-k+1} \eps^{N-k+1}/\lambda^{N-k+1}\cdot e^{-(N-k+1)}$. Therefore, 
    \begin{align*}
        &\left|\left(\sum_{k=0}^N \eps^k (T_k+\bar{T}_k)\right)^4 - \left(\sum_{k=0}^N \eps^k T_k\right)^4  - \left(\sum_{k=0}^N \eps^k (P_k+\bar{T}_k)\right)^4 + \left(\sum_{k=0}^N \eps^k P_k\right)^4 \right| \nonumber\\
        &\quad \le C \eps^{N+1} \sum_{k=0}^N \frac{(N-k+1)^{N-k+1}}{\lambda^{N-k+1}(N-k+1)!} e^{-(N-k+1)} = C \eps^{N+1} \sum_{n=1}^{N+1} \frac{n^n}{\lambda^n n!} e^{-n} \le C(\gamma_N -1)\eps^{N+1},
    \end{align*}
    where $\gamma_N:=\sum_{n=0}^{N+1}n^n/(\lambda^n n!)>1$ is a constant depending on $N$.
    Therefore, 
    \begin{align*}
        \|R_{131}\|_{L^2(\Omega)} \le C(\gamma_N-1) \eps^{N+1},~ \|R_{131}\|_{L^\infty(\Omega)} \le C(\gamma_N-1) \eps^{N+1}.
    \end{align*}
    Combining this with \eqref{eq:R132}, we get 
    \begin{align}\label{eq:R13est}
        \|R_{13}\|_{L^2(\Omega)} \le C\gamma_N \eps^{N+1},~\|R_{13}\|_{L^\infty(\Omega)} \le C\gamma_N \eps^{N+1}.
    \end{align}
    Finally, we consider $R_{14}$. Recalling \eqref{eq:ek0}, we have
    \begin{align}\label{eq:e00r}
        E_0^0& = (\tilde{T}_0-\tilde{T}_{0,\infty})\partial_\eta^2\chi + 2\partial_\eta\chi \partial_\eta \tilde{T}_0  + \chi \langle \tilde{T}_0^4-\tilde{T}_{0,\infty}^4\rangle - \langle (\chi(\tilde{T}_0-\tilde{T}_{0,\infty}) + \tilde{T}_{0,\infty})^4 - \tilde{T}_{0,\infty}^4 \rangle,
    \end{align}
    Since $\tilde{T}_0(\eta,x')-\tilde{T}_{0,\infty}(x')$ and $\partial_\eta \tilde{T}_0(\eta,x')$ exponentially decay to $0$ as $\eta\to\infty$ and $\partial_\eta \chi(\eps\eta) = \eps \partial_{x_1} \chi(x_1)$, $\partial_\eta^2\chi (\eps\eta)= \eps^2 \partial^2_{x_1}\chi(x_1)$ are supported on the interval $\eps\eta\in (\tfrac38\delta,\infty)$, hence 
    \begin{align}\label{eq:E00pre1}
        |(\tilde{T}_0-\tilde{T}_{0,\infty})\partial_\eta^2\chi + 2\partial_\eta\chi \partial_\eta \tilde{T}_0| \le C e^{-\lambda \eta}(\eps|\partial_{x_1}\chi| + \eps^2 |\partial_{x_1}\chi| )1_{\eta\ge \frac{3\delta}{8\eps}} \le C \eps e^{-\frac{3\lambda \delta}{8\eps} }.
    \end{align} 
    Since $\tilde{T}_0(\eta,x')-\tilde{T}_{0,\infty}(x')$ decays exponentially, 
    \begin{align*}
        |\tilde{T}_0^4 - \tilde{T}_{0,\infty}^4| = |(\tilde{T}_0-\tilde{T}_{0,\infty}| \cdot |(\tilde{T}_0+\tilde{T}_{0,\infty})(\tilde{T}_0^2+\tilde{T}_{0,\infty})^2| \le C e^{-\lambda \eta},
    \end{align*}
    and 
    \begin{align*}
        |(\chi(\tilde{T}_0-\tilde{T}_{0,\infty}) + \tilde{T}_{0,\infty})^4 - \tilde{T}_{0,\infty}^4| &= \chi|(\tilde{T}_0-\tilde{T}_{0,\infty})| \cdot |\chi(\tilde{T}_0-\tilde{T}_{0,\infty})+\tilde{T}_{0,\infty}|\cdot |(\chi(\tilde{T}_0-\tilde{T}_{0,\infty}))^2+\tilde{T}_{0,\infty}^2| \nonumber\\
        &\le C e^{-\lambda\eta}.
    \end{align*}
    Note that for $\eps \eta <\tfrac14$, $\chi(\eps\eta)=1$ and 
    \begin{align*}
        \langle \chi \langle \tilde{T}_0^4-\tilde{T}_{0,\infty}^4\rangle - \langle (\chi(\tilde{T}_0-\tilde{T}_{0,\infty}) + \tilde{T}_{0,\infty})^4 - \tilde{T}_{0,\infty}^4 \rangle =  \langle \tilde{T}_0^4-\tilde{T}_{0,\infty}^4\rangle - \langle \tilde{T}_0^4 - \tilde{T}_{0,\infty}^4 \rangle = 0.
    \end{align*}
    Hence the left term of the above equation is supported on the interval $\eps\eta\in [\tfrac14\delta,\infty)$. Therefore,
    \begin{align}\label{eq:E00pre2}
       | \langle \chi \langle \tilde{T}_0^4-\tilde{T}_{0,\infty}^4\rangle - \langle (\chi(\tilde{T}_0-\tilde{T}_{0,\infty}) + \tilde{T}_{0,\infty})^4 - \tilde{T}_{0,\infty}^4 \rangle| \le C e^{-\lambda \eta} 1_{\eta \ge \tfrac{\delta}{4\eps}} \le C e^{-\frac{\lambda \delta}{4\eps}}.
    \end{align}
    combining the above inequality with \eqref{eq:E00pre1}, \eqref{eq:e00r} satisfies 
    \begin{align}\label{eq:E00est}
        |E_0^0| \le C \eps e^{-\frac{3\delta}{8\eps}} + C e^{-\frac{\lambda \delta}{4\eps}} \le C e^{-\frac{\lambda \delta}{4\eps}}.
    \end{align}
    For $k\ge 1$, we recall \eqref{eq:ek0}:
    \begin{align*}
        & E_{k}^0= ((\tilde{T}_{k}-\tilde{T}_{k,\infty})\partial_\eta^2 \chi + 2 \partial_\eta\chi \partial_\eta \tilde{T}_k) + \Big(\chi(\langle 4\tilde{T}_0^3\tilde{T}_k - 4\tilde{T}_{0,\infty}^3\tilde{T}_{k,\infty} +   4(\tilde{T}_0^3-\tilde{T}_{0,\infty}^3)(P_k-P_k(0)) \rangle\nonumber\\
        &\quad+\langle\mathcal{E}(\bar{T}+P,k-1) - \mathcal{E}(P,k-1) \rangle)  -  \langle \mathcal{C}(\bar{T}+P,k) - \mathcal{C}(P,k)\rangle\Big) + (1-\chi\chi_0)\Delta_{x'}\bar{T}_{k-2} \nonumber\\
        &=: E_{k1}^0 + E_{k2}^0 + E_{k3}^0.  
    \end{align*}
    The term $ (\tilde{T}_{k}-\tilde{T}_{k,\infty})\partial_\eta^2 \chi + 2 \partial_\eta\chi \partial_\eta \tilde{T}_k $ can be estimated in the same  way as \eqref{eq:E00pre1} as 
    \begin{align}\label{eq:Ek10pre}
        |E_{k1}^0|=| (\tilde{T}_{k}-\tilde{T}_{k,\infty})\partial_\eta^2 \chi + 2 \partial_\eta\chi \partial_\eta \tilde{T}_k | \le C\eps  e^{-\frac{3\lambda \delta}{8\eps}}.
    \end{align}
    For $E_{k3}^0$, $(1-\chi(\eps\eta)\chi_0(\eps\eta))$ is supported in $\eps\eta\in [\tfrac14\delta,\infty)$. By \eqref{eq:bdecay},
    \begin{align}\label{eq:Ek30pre}
        |E_{k3}^0| \le Ce^{-\lambda\eta} 1_{\eta>\tfrac{\delta}{4\eps}} \le C e^{-\frac{\lambda \delta}{4\eps}}.
    \end{align}
    To estimate $E_{k2}^2$, due to \eqref{eq:bdecay}, 
    \begin{align*}
        |4\tilde{T}_0^3\tilde{T}_k - 4\tilde{T}_{0,\infty}^3\tilde{T}_{k,\infty} +   4(\tilde{T}_0^3-\tilde{T}_{0,\infty}^3)(P_k-P_k(0)) | \le C e^{-\lambda\eta} + Ce^{-\lambda\eta} (1+\eta^k) \le C(1+\eta^k)e^{-\lambda\eta}.
    \end{align*}
    Since $(\mathcal{E}(\bar{T}+P,k-1) - \mathcal{E}(P,k-1))/\bar{T}$ is a polynomial of $\bar{T}$, $P$ of order no bigger than $\eta^{k-1}$ and thus is bounded, we have
    \begin{align*}
        |\mathcal{E}(\bar{T}+P,k-1) - \mathcal{E}(P,k-1)| = |\bar{T}| \left|\frac{ \mathcal{E}(\bar{T}+P,k-1) - \mathcal{E}(P,k-1)}{\bar{T}}\right| \le C(1+\eta^{k-1}) e^{-\lambda\eta}.
    \end{align*}
   Similarly,
    \begin{align*}
        |\mathcal{C}(\bar{T}+P,k) - \mathcal{C}(P,k)| = |\bar{T}| \left|\frac{ \mathcal{C}(\bar{T}+P,k) - \mathcal{C}(C,k)}{\bar{T}}\right| \le C(1+\eta^{k}) e^{-\lambda\eta}.
    \end{align*}
    Therefore,
    \begin{align*}
        E_{k2}^0 \le C (1+\eta^{k})e^{-\lambda\eta}.
    \end{align*}
    Moreover, when $\eps\eta<\tfrac14$, $\chi(\eps\eta)=1$ and due to the formula $\mathcal{C}(\bar{T}+P,k)-\mathcal{C}(P,k) = 4(\bar{T}_0+P_0)^3(\bar{T}_k+P_k) - 4P_0^3P_k + \mathcal{E}(\bar{T}+P,k-1)-\mathcal{E}(P,k-1)$
   as well as 
   \begin{align*}
       &4\tilde{T}_0^3\tilde{T}_k - 4\tilde{T}_{0,\infty}^3\tilde{T}_{k,\infty} + 4(\tilde{T}_0^3-\tilde{T}_{0,\infty}^3)(P_k-P_k(0)) \nonumber\\
       &\quad = 4(\bar{T}_0+P_0)^3(\bar{T}_k + P_k(0)) - 4\tilde{T}_{0,\infty}^3 P_k(0) + 4((\bar{T}_0+P_0)^3-\tilde{T}_{0,\infty}^3)(P_k-P_k(0)) \nonumber\\
       &\quad = 4 (\bar{T}_0+P_0)^3(\bar{T}_k+P_k) - 4P_0^3P_k,
   \end{align*} 
   where we use the fact that $P_0=T_0(0)=\tilde{T}_{0,\infty}$, we have $E_{k2}^0=0$ on the interval $\eps\eta\in [0,\tfrac14\delta)$. Thus 
   \begin{align*}
       E_{k2}^0 \le C (1+\eta^{k}) e^{-\lambda\eta}1_{\eta>\tfrac{\delta}{4\eps}} \le C(1+\frac{1}{\eps^k}) e^{-\frac{\lambda \delta}{4\eps}}.
   \end{align*}
   combining this with \eqref{eq:Ek10pre} and \eqref{eq:Ek30pre}, we obtain 
   \begin{align*}
      |E_{k}^0| \le C\eps e^{-\frac{3\delta}{8\eps}} + Ce^{-\frac{\lambda \delta}{4\eps}} +  C(1+\frac{1}{\eps^k}) e^{-\frac{\lambda \delta}{4\eps}} \le C e^{-\frac{\lambda \delta}{4\eps}} + C \frac{1}{\eps^k} e^{-\frac{\lambda \delta}{4\eps}}.
   \end{align*}
   combining the above estimate with \eqref{eq:E00est}, we arrive at 
   \begin{align*}
       \left|\sum_{k=0}^N E_{k}^0\right| \le \sum_{k=0}^N |E_k^0| \le Ce^{-\frac{\lambda \delta}{4\eps}} + \sum_{k=1}^N \eps^k\left(1+\frac{1}{\eps^k}\right) e^{-\frac{\lambda \delta}{4\eps}} \le (N+1)C e^{-\frac{\lambda\delta}{4\eps}} + e^{-\frac{\lambda \delta}{4\eps}}\sum_{k=1}^N \eps^k.
   \end{align*}
   We can take $\eps<1/2$ so that $\sum_{k=1}^N \eps^k\le \sum_{k=1}^N 1/2^k \le 1$ and get 
   \begin{align*}
    |R_{14}|=\left|\sum_{k=0}^N E_{k}^0\right| \le (N+2) Ce^{-\frac{\lambda \delta}{4\eps}}.
   \end{align*}
   By the regularity of solutions, we have 
   \begin{align*}
       \|R_{14}\|_{L^2(\Omega)},~\|R_{14}\|_{L^\infty(\Omega)} \le (N+2) Ce^{-\frac{\lambda \delta}{4\eps}}.
   \end{align*}
   Taking the above inequality and \eqref{eq:R11est}, \eqref{eq:R12est} and \eqref{eq:R13est}, we obtain 
   \begin{align}\label{eq:R1/est}
       \|\mathcal{R}_1(T^a,\psi^a)\|_{L^2(\Omega)}, ~\|\mathcal{R}_1(T^a,\psi^a)\|_{L^\infty(\Omega)} \le C\gamma_N \eps^{N+1} + (N+2) Ce^{-\frac{\lambda \delta}{4\eps}}.
   \end{align}
   
   One can estimate $\mathcal{R}_2(T^a,\psi^a)$ given by \eqref{eq:R2b} in the same way. Recalling \eqref{eq:R2b},
   \begin{align*}
    \mathcal{R}_2(T^a,\psi^a) &=  (\eps^{N+1} \beta\cdot \nabla \psi_{N} +\varepsilon^{N+1} \beta'\cdot\nabla_{x'} \bar{\psi}_N) - \sum_{k=N+1}^{4N} \eps^k(\mathcal{C}(T+\bar{T},k))  \nonumber\\
    &\quad - \sum_{k=0}^N \eps^k ( \mathcal{C}(T+\bar{T},k) - \mathcal{C}(T,k) - \mathcal{C}(\bar{T}+P,k) + \mathcal{C}(P,k)) + \sum_{k=0}^N \eps^k E_k^1 \nonumber\\
    &=:R_{21}+R_{22}+R_{23}+R_{24}.
   \end{align*}
 By the boundness of solutions to the interior expansion and boundary layer corrections, $|R_{21}| = O(\eps^{N+1})$, $|R_{22}|=O(\eps^{N+1})$. The term $R_{23}$ is the same as that of $R_{13}$ without integration over $\beta\in\mathbb{S}^2$ and thus can be estimated in the same way, $|R_{13}|\le C\gamma_N\eps^{N+1}$. Finally, $R_{24}$ can be estimated in the same way as $R_{14}$, whereas the only difference is the term $\mu (\tilde{\psi}_k-\tilde{\psi}_{k,\infty})\partial_\eta \chi$, which is supported on $\eps\eta \in [\tfrac14\delta,\infty)$ and thus can be estimate by 
   \begin{align*}
       |\mu (\tilde{\psi}_k-\tilde{\psi}_{k,\infty})\partial_\eta \chi| \le Ce^{-\lambda\eta} 1_{\eta\ge \tfrac{\delta}{4\eps}} \le Ce^{-\lambda \delta/4\eps},
   \end{align*}
   and $|R_{24}| \le (N+2)Ce^{-\frac{\lambda}{4\eps}}$. Thus \eqref{eq:R1/est} also holds for $\mathcal{R}_{2}(T^a,\psi^a)$, i.e.
   \begin{align*}
    \|\mathcal{R}_2(T^a,\psi^a)\|_{L^2(\Omega\times\mathbb{S}^2)}, ~\|\mathcal{R}_2(T^a,\psi^a)\|_{L^\infty(\Omega\times\mathbb{S}^2)} \le C\gamma_N \eps^{N+1} + (N+2) Ce^{-\frac{\lambda \delta}{4\eps}}.
   \end{align*}
\end{proof}
\subsection{An inequality}
Next we prove inequality \eqref{eq:est/gg} under the spectral assumption.
\begin{lemma} \label{lm.g2}
    Let $(T^a,\psi^a)$ be the approximate solution constructed in the previous section. Assuming the spectral assumption \ref{asA} holds for the solution $\tilde{T}_0$ of the nonlinear Milne problem \eqref{eq:ml1}-\eqref{eq:ml2b} where $\tilde{T}_0\ge a$ for some constant $a>0$.
    Then, for $\eps>0$ sufficiently small, the following inequality holds
    \begin{align}\label{eq:est>g2}
       - \int_\Omega 4(T^a)^3 g \Delta g dx = \int_\Omega 4(T^a)^3|\nabla g|^2 dx - \int_\Omega \nabla (4(T^a)^3)\cdot g \nabla g dx  \ge \kappa \int_\Omega |\nabla g|^2 dx  -C \|g\|_{L^2(\Omega)}^2,
    \end{align}
for any function $g$ satisfying $g(0)=0$ and for some constants $\kappa>0$, $C>0$ depending on $M$, where $M<1$ is the constant in \eqref{eq:sp} of the spectral assumption.
\end{lemma}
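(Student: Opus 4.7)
The plan is to integrate by parts, isolate the one piece of the cross term that is genuinely singular in $\eps$, and control that piece via the spectral assumption. Since $g$ vanishes on $\{x_1=0\}$ and the $x'$-variables are periodic, integration by parts yields
\begin{equation*}
-\int_\Omega 4(T^a)^3 g \Delta g \, dx = \int_\Omega 4(T^a)^3 |\nabla g|^2 dx + \int_\Omega 12(T^a)^2 \nabla T^a \cdot g \nabla g \, dx,
\end{equation*}
up to a harmless boundary contribution at $\{x_1=1\}$. Decomposing $T^a = T^I + \bar T$ with $T^I = \sum_{k=0}^N \eps^k T_k$ the interior expansion and $\bar T = \sum_{k=0}^N \eps^k \bar T_k$ the boundary-layer correction, we observe that $\nabla T^I$ and $\nabla_{x'} \bar T$ are bounded uniformly in $\eps$ (by the regularity in Lemma \ref{lem:ok} and the decay in Lemma \ref{lem:bdecay}); the only singular piece is $\partial_{x_1} \bar T = O(\eps^{-1})$, supported in the strip $\{0 \le x_1 \le \tfrac{3\delta}{8}\}$.

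All non-singular contributions are handled by Young's inequality: for any $\eps_0 > 0$,
\begin{equation*}
\left|\int_\Omega 12(T^a)^2 (\nabla T^I + \nabla_{x'} \bar T)\cdot g \nabla g \, dx\right| \le \eps_0 \int_\Omega |\nabla g|^2 dx + C_{\eps_0} \|g\|_{L^2(\Omega)}^2,
\end{equation*}
so the problem reduces to controlling the singular piece
\begin{equation*}
\mathcal I := \int_\Omega 12(T^a)^2 \partial_{x_1} \bar T \cdot g \, \partial_{x_1} g \, dx.
\end{equation*}
Rescaling via $\eta = x_1/\eps$ and $h(\eta, x') := g(\eps\eta, x')$, one checks that $\int_0^1 4(T^a)^3(\partial_{x_1} g)^2 dx_1$ and the $x'$-slice of $\mathcal I$ both carry the same factor $\eps^{-1}$, and that to leading order in $\eps$ the coefficient $12(T^a)^2 \partial_{x_1} \bar T$ coincides with $\partial_{x_1}(4\tilde T_0^3)$, since $T_0(0,x') = \tilde T_{0,\infty}(x')$ and higher-order corrections contribute $O(\eps)$ errors that are absorbed elsewhere. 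Completing the square in $\eta$ with parameter $\theta>0$ then leaves one with the task of bounding $\int \frac{(\partial_\eta(4\tilde T_0^3))^2}{4\tilde T_0^3} h^2 \, d\eta = 4\int|\partial_\eta(2\tilde T_0^{3/2})|^2 h^2 \, d\eta$ by a constant strictly less than one times $\int 4\tilde T_0^3 (\partial_\eta h)^2 d\eta$, which is exactly the content of the spectral assumption \ref{asA}. With the optimal choice $\theta = \sqrt M / 2$, one obtains a coercivity constant $1 - \sqrt M > 0$, and the pointwise lower bound $\tilde T_0 \ge a$ produces the quantitative $\kappa \int|\nabla g|^2$ lower bound.

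The main obstacle is reconciling the weighted form of \ref{asA} (with $e^{2\tau\eta}$ on all of $\mathbb{R}_+$) with the unweighted coercivity needed on $\eta \in [0,1/\eps]$, since the naive estimate $e^{2\tau\eta} \le e^{2\tau/\eps}$ would be catastrophic as $\eps \to 0$. The resolution is that the right-hand side of the spectral inequality contains the exponentially decaying factor $|\partial_\eta(2\tilde T_0^{3/2})|^2 \le C e^{-2\lambda_0\eta}$ with $\lambda_0 > \tau$ (by Lemma \ref{lem:bdecay}), so that it is effectively supported in a bounded $\eta$-window, and the weight can be removed at the price of lower-order terms absorbed into $C\|g\|_{L^2}^2$. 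Combining the coercive boundary-layer bound with the Young estimate for the non-singular contributions yields
\begin{equation*}
-\int_\Omega 4(T^a)^3 g \Delta g \, dx \ge \kappa \int_\Omega |\nabla g|^2 dx - C \|g\|_{L^2(\Omega)}^2
\end{equation*}
for $\eps > 0$ small enough, with $\kappa$ and $C$ depending only on $M$ and $a$.
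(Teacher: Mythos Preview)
Your argument follows the same route as the paper: split off the boundary-layer strip $\{x_1\le\tfrac{3}{8}\delta\}$, separate tangential from normal derivatives, rescale the normal piece via $\eta=x_1/\eps$, reduce the coefficient to the leading profile $\tilde T_0$ (the paper's error terms $I_{22},I_{23},I_{24}$ are exactly your ``higher-order corrections contribute $O(\eps)$ errors''), and then invoke the spectral assumption after a Young splitting---the paper takes the analogue of $\theta=\tfrac12$ and obtains coercivity constant $\tfrac{1-M}{2}$, whereas your optimized $\theta=\sqrt M/2$ yields the sharper $1-\sqrt M$. The one place you go beyond the paper is in flagging the weight $e^{2\tau\eta}$ in \ref{asA}: the paper simply applies the assumption as if it were unweighted on the finite interval $[0,\tfrac{3\delta}{8\eps}]$ without comment, so your heuristic is at least as rigorous as the paper's own treatment here, though your specific claim $\lambda_0>\tau$ is not guaranteed by Lemma~\ref{lem:bd0} (which only gives $\lambda_0<1$, with no a priori relation to $\tau$).
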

\begin{proof}
    Note that $T^a=\sum_{k=0}^N \eps^k(T_k + \bar{T}_k)$ where $\bar{T}_k = \chi(x_1)(\tilde{T}_k-\tilde{T}_{k,\infty})$. In the domain $x_1>\tfrac38 \delta$, $\chi(x_1)=0$ and $T^a = \sum_{k=0}^N \eps^k T_k$, which only contain the interior approximations. Since $\|T_k\|_{C^s(\Omega)}$ is bounded for any $s>0$ and $k=1,\ldots,N$,
    \begin{align}
        &\int_{\Omega\cap \{x_1>\tfrac38\delta\}} 4(T^a)^3|\nabla g|^2 dx - \int_{\Omega\cap \{x_1>\tfrac38\delta\}}  \nabla (4(T^a)^3)\cdot g \nabla g dx \nonumber\\
        &\quad = \int_{\Omega\cap \{x_1>\tfrac38\delta\}} 4(T^a)^3|\nabla g|^2 dx - \int_{\Omega\cap \{x_1>\tfrac38\delta\}} 2 (T^a)^{3/2}\nabla g \cdot 6 (T^a)^{1/2} \nabla T^a  g dx \nonumber\\
        &\quad \ge \int_{\Omega\cap \{x_1>\tfrac38\delta\}} 4(T^a)^3|\nabla g|^2 dx - \frac12 \int_{\Omega\cap \{x_1>\tfrac38\delta\}} 4(T^a)^3|\nabla g|^2 dx - \frac12 \int_{\Omega\cap \{x_1>\tfrac38\delta\}} 36 T^a |\nabla T^a|^2 g^2 dx \nonumber\\
        &\quad \ge \int_{\Omega\cap \{x_1>\tfrac38\delta\}} 2(T^a)^3|\nabla g|^2 dx - C \|g\|_{L^2(\Omega)}^2. \label{eq:366}
    \end{align}
    In the domain $x_1\le \frac{3}{8}\delta$, boundary layer effects play a role. First we split the integral as
    \begin{align*}
        &\int_{\Omega\cap \{x_1\le\tfrac38\}} 4(T^a)^3|\nabla g|^2 dx - \int_{\Omega\cap \{x_1\le\tfrac38\}}  \nabla (4(T^a)^3)\cdot g \nabla g dx \nonumber\\
        &\quad = \int_{\Omega\cap \{x_1\le\tfrac38\}} 4(T^a)^3|\nabla_{x'} g|^2 dx - \int_{\Omega\cap \{x_1\le\tfrac38\}}  \nabla_{x'} (4(T^a)^3)\cdot g \nabla_{x'} g dx \nonumber\\
        &\qquad + \int_{\Omega\cap \{x_1\le\tfrac38\}} 4(T^a)^3|\partial_{x_1} g|^2 dx - \int_{\Omega\cap \{x_1\le\tfrac38\}}  \partial_{x_1} (4(T^a)^3)\cdot g \partial_{x_1} g dx
        =: I_1 + I_2.
    \end{align*}
    Since $\|\nabla_{x'}(T^a)\|_{L^2(\Omega)}$ is bounded, we can estimate $I_1$ the same as \eqref{eq:366}:
    \begin{align}\label{eq:I1/e}
        I_1 \ge \int_{\Omega\cap \{x_1\le \tfrac38\}} 2(T^a)^3|\nabla_{x'} g|^2 dx - C \|g\|_{L^2(\Omega)}^2.
    \end{align} 
    To estimate $I_2$, we use the spectral assumption \ref{asA}. Near the boundary, the composite approximate solution $(T^a,\psi^a)$ is close to the solution $(\tilde{T}_0,\tilde{\psi}_0)$ of the nonlinear Milne problem \ref{eq:ml1}-\eqref{eq:ml2b}.
    Using the equation
    \begin{align*}
        (T^a)^3= \left(\sum_{k=0}^N \eps^k (T_k+\bar{T}_k) \right)^3 = (T_0+\bar{T}_0)^3 + \varepsilon G,
    \end{align*}
    where $G=3 (T_0+\bar{T}_0)^2 \sum_{k=1}^N \eps^{k-1} (T_k+\bar{T}_k) + 6(T_0+\bar{T}_0)^2 (\sum_{k=1}^N \eps^{k-1} (T_k+\bar{T}_k))^2 + 3(T_0+\bar{T}_0) (\sum_{k=1}^N \eps^{k-1} (T_k+\bar{T}_k))^2$,
    we can rewrite $I_2$ as 
    \begin{align*}
        I_2 &=\frac{1}{\varepsilon^2} \int_{\mathbb{T}^2} \int_0^{\frac{3}{8\eps}}  4(T_0+\bar{T}_0)^3 + 4 \varepsilon G) |\partial_\eta g|^2 d\eta dx' -\frac{1}{\varepsilon^2}\int_{\mathbb{T}^2} \int_0^{\frac{3}{8\eps}} \partial_{\eta} (4(T_0+\bar{T}_0)^3 + 4 \varepsilon G) g \partial_\eta g d\eta dx' \nonumber\\
        &= \frac{1}{\varepsilon^2}\int_{\mathbb{T}^2} dx' \int_{0}^{\frac{3}{8\varepsilon}} (4\tilde{T}_0^3 |\partial_\eta g|^2 - \partial_\eta (4 \tilde{T}_0^3)g\partial_\eta g )d\eta + \frac{1}{\varepsilon^2} \int_{\mathbb{T}^2} dx' \int_{0}^{\frac{3}{8\varepsilon}} (4(T_0+\bar{T}_0)^3-4\tilde{T}_0^3)|\partial_\eta g|^2 d\eta \nonumber\\
        &\quad - \frac{1}{\varepsilon^2} \int_{\mathbb{T}^2}dx' \int_0^{\frac{3}{8\varepsilon}} \partial_\eta (4(T_0+\bar{T}_0)^3 - 4\tilde{T}_0^3) g\partial_\eta g d\eta dx' + \frac{1}{\varepsilon^2} \int_{\mathbb{T}^2}dx' \int_0^{\frac{3}{8\varepsilon}}  \varepsilon 4 (G |\partial_\eta g|^2 - \partial_\eta G g \partial_\eta g) d\eta \nonumber\\
        &=:I_{21}+I_{22}+I_{23}+I_{24}.
    \end{align*}
    The spectral assumption \ref{asA} implies  
    \begin{align*}
        I_{21} &= \frac{1}{\varepsilon^2}\int_{\mathbb{T}^2} dx' \int_{0}^{\frac{3}{8\varepsilon}} (4\tilde{T}_0^3 |\partial_\eta g|^2 - \partial_\eta (4 \tilde{T}_0^3)g\partial_\eta g )d\eta \nonumber\\
        &\ge \frac{1}{\varepsilon^2}\int_{\mathbb{T}^2} dx' \int_{0}^{\frac{3}{8\varepsilon}}( 4\tilde{T}_0^3 |\partial_\eta g|^2 - \frac12 (4\tilde{T}_0^3 |\partial_\eta g|^2+36\tilde{T}_0|\partial_\eta \tilde{T}_0|^2 g^2) ) d\eta \nonumber\\
        &\ge  \frac{1}{2\varepsilon^2}\int_{\mathbb{T}^2} dx' \int_{0}^{\frac{3}{8\varepsilon}} (4 \tilde{T}_0^3 |\partial_\eta g|^2 - 36\tilde{T}_0|\partial_\eta \tilde{T}_0|^2 g^2 ) d\eta \nonumber\\
        &\ge \frac{1-M}{2\varepsilon^2} \int_{\mathbb{T}^2} dx' \int_{0}^{\frac{3}{8\varepsilon}} 4\tilde{T}_0^3|\partial_\eta g|^2 d\eta.
    \end{align*}
    For $I_{22}$, since $\bar{T}_0=T_0 + \chi(\varepsilon\eta) (\tilde{T}_0-T_0(0))$, it holds that 
    \begin{align*}
        (T_0+\bar{T}_0)^3 - \tilde{T}_0^3 &= (T_0 +\chi(\varepsilon\eta) (\tilde{T}_0-T_0(0)) -\tilde{T}_0)(\tilde{T}_0^2+\tilde{T}_0(T_0+\bar{T}_0)+(T_0+\bar{T}_0)^2) \nonumber\\
        &=((T_0-T_0(0)) - (1-\chi(\varepsilon\eta))(\tilde{T}_0 - \bar{T}_0))(\tilde{T}_0^2+\tilde{T}_0(T_0+\bar{T}_0)+(T_0+\bar{T}_0)^2) \nonumber\\
        &= (\partial_{x_1}T_0(\xi) \varepsilon \eta  - (1-\chi(\varepsilon\eta))(\tilde{T}_0 - \bar{T}_0))(\tilde{T}_0^2+\tilde{T}_0(T_0+\bar{T}_0)+(T_0+\bar{T}_0)^2).
    \end{align*}
    Since we are considering the integration over $x_1=\varepsilon\eta \in [0,\tfrac38\delta]$ and  $(1-\chi(\varepsilon\eta))$ is supported on $[\tfrac14\delta,\tfrac38\delta]$,
    \begin{align*}
        I_{22} = \frac{1}{\varepsilon^2} \int_{\mathbb{T}^2} dx' \int_{0}^{\frac{3}{8\varepsilon}} (4(T_0+\bar{T}_0)^3-4\tilde{T}_0^3)|\partial_\eta g|^2 d\eta \le \frac{3\delta C}{8\varepsilon^2} \int_{\mathbb{T}^2} dx' \int_0^{\frac{\delta}{2\eps}} |\partial_\eta g|^2 d\eta.
    \end{align*}
    For $I_{23}$, due to 
    \begin{align*}
        \partial_\eta  ((T_0+\bar{T}_0)^3 - \tilde{T}_0^3) &= \partial_\eta(T_0 +\chi(\varepsilon\eta) (\tilde{T}_0-T_0(0)) -\tilde{T}_0)(\tilde{T}_0^2+\tilde{T}_0(T_0+\bar{T}_0)+(T_0+\bar{T}_0)^2) \nonumber\\
        &\quad  + (T_0 +\chi(\varepsilon\eta) (\tilde{T}_0-T_0(0)) -\tilde{T}_0)\partial_\eta (\tilde{T}_0^2+\tilde{T}_0(T_0+\bar{T}_0)+(T_0+\bar{T}_0)^2) \nonumber\\
        &= \varepsilon \chi'(\eps\eta)(\tilde{T}_0-{T}_0(0))(\tilde{T}_0-T_0(0)) -\tilde{T}_0)(\tilde{T}_0^2+\tilde{T}_0(T_0+\bar{T}_0)+(T_0+\bar{T}_0)^2) \nonumber\\
        &\quad + (\chi(\varepsilon\eta)-1) \partial_\eta \tilde{T}_0 (\tilde{T}_0^2+\tilde{T}_0(T_0+\bar{T}_0)+(T_0+\bar{T}_0)^2) \nonumber\\
        &\quad +  (\partial_{x_1}T_0(\xi) \varepsilon \eta  - (1-\chi(\varepsilon\eta))(\tilde{T}_0 - \bar{T}_0))\partial_\eta(\tilde{T}_0^2+\tilde{T}_0(T_0+\bar{T}_0)+(T_0+\bar{T}_0)^2),
    \end{align*}
    with consideration of $\eps\eta\in [0,\tfrac38\delta]$ and and  $(1-\chi(\varepsilon\eta))$ being supported on $[\tfrac14\delta,\tfrac38\delta]$,
    it holds that 
    \begin{align*}
        I_{23} &= - \frac{1}{\varepsilon^2} \int_{\mathbb{T}^2}dx' \int_0^{\frac{3\delta}{8\varepsilon}} \partial_\eta (4(T_0+\bar{T}_0)^3 - 4\tilde{T}_0^3) g\partial_\eta g d\eta  \le  \frac{C}{\varepsilon^2} \int_{\mathbb{T}^2}dx' \int_0^{\frac{3\delta}{8\varepsilon}} (\varepsilon |g| |\partial_\eta g| + \frac{3\delta}{8} |g| |\partial_\eta g| )d\eta \nonumber\\
        &\le \frac{3\delta C}{8\eps^2} \int_{\mathbb{T}^2}dx' \int_0^{\frac{3\delta}{8\varepsilon}} (g^2 + |\partial_\eta g|^2) d\eta.
    \end{align*}
    For $I_{24}$, we have 
    \begin{align*}
        I_{24} =  \frac{1}{\varepsilon^2} \int_{\mathbb{T}^2}dx' \int_0^{\frac{3\delta}{8\varepsilon}}  \varepsilon 4 (G |\partial_\eta g|^2 - \partial_\eta G g \partial_\eta g) d\eta \le \frac{C }{\varepsilon} \int_{\mathbb{T}^2}dx' \int_0^{\frac{3\delta}{8\varepsilon}}( |g|^2 + |\partial_\eta g|^2) d\eta.
    \end{align*}
    combining the above estimates gives 
    \begin{align*}
        I_2&\ge \frac{1-M}{2\varepsilon^2} \int_{\mathbb{T}^2} dx' \int_{0}^{\frac{3\delta}{8\varepsilon}} 4\tilde{T}_0^3|\partial_\eta g|^2 d\eta - \frac{3\delta C}{8\varepsilon^2}\int_{\mathbb{T}^2}dx' \int_0^{\frac{3\delta}{8\varepsilon}}( |g|^2 + |\partial_\eta g|^2) d\eta \nonumber\\
        &\quad -  \frac{C}{\varepsilon} \int_{\mathbb{T}^2}dx' \int_0^{\frac{3\delta}{8\varepsilon}}( |g|^2 + |\partial_\eta g|^2) d\eta.
    \end{align*}
    By the assumption of the lemma, $\tilde{T}_0\ge a$, hence $4\tilde{T}_0^3 \ge 4a^3$ for some constant $a>0$.
    We can take sufficiently small $\eps$ and $\delta$ such that $\varepsilon<(1-M)a^3/C$ and $3\delta C/8 \le (1-M)/8$, and we get from the above inequality
    \begin{align*}
        I_2 \ge \frac{1-M}{4\varepsilon^2} \int_{\mathbb{T}^2} dx' \int_{0}^{\frac{3\delta}{8\varepsilon}} 4\tilde{T}_0^3|\partial_\eta g|^2 d\eta. 
    \end{align*}
    combining this with \eqref{eq:366} and \eqref{eq:I1/e} implies
    \begin{align*}
        - \int_\Omega 4(T^a)^3 g \Delta g dx &= \int_\Omega 4(T^a)^3|\nabla g|^2 dx - \int_\Omega \nabla (4(T^a)^3)\cdot g \nabla g dx \nonumber\\
        &\ge\int_{\Omega\cap \{x_1>\tfrac38\delta\}} 2(T^a)^3|\nabla g|^2 dx  + \int_{\Omega\cap \{x_1\le \tfrac38\delta\}} 2(T^a)^3|\nabla_{x'} g|^2 dx  \nonumber\\
        &\quad +\frac{1-M}{4\varepsilon^2} \int_{\mathbb{T}^2} dx' \int_{0}^{\frac{3\delta}{8\varepsilon}} 4\tilde{T}_0^3|\partial_\eta g|^2 d\eta - C\|g\|_{L^2(\Omega)}^2\nonumber\\
        &\ge \kappa \|\nabla g\|_{L^2(\Omega)}^2-C\|g\|_{L^2(\Omega)}^2,
    \end{align*}
    where $\kappa=\min\{2a^3,(1-M)a^3\}$, which finishes the proof of Lemma \ref{lm.g2}.
\end{proof}

\section{Diffusive Limit}\label{sec4}

In this section, we prove Theorem \ref{thm.1} by estimating the difference between the solution $(T^\eps,\psi^\eps)$ to system \eqref{eq:1}-\eqref{eq:2} and the constructed approximate solution $(T^a,\psi^a)$, which satisfies \eqref{eq:1a}-\eqref{eq:2a}.
 Setting $g:=T^\eps-T^a,\phi := \psi^\eps - \psi^a$, functions $(g,\phi)$ then satisfy
\begin{align}
    &\eps^2 \Delta g + \langle \phi - (T^a + g)^4 + (T^a)^4 \rangle = - \mathcal{R}_1(T^a,\psi^a),\label{eq:n1}\\
    &\eps \beta\cdot\nabla\phi + \phi -  (T^a + g)^4 + (T^a)^4 = -\mathcal{R}_2(T^a,\psi^a),\label{eq:n2}
\end{align}
with boundary conditions
\begin{align*}
    &g(x) = 0, \quad \text{for } x \in \partial\Omega,\\
    &\phi(x,\beta)=0,\text{ for }(x,\beta)\in \Gamma_-.
\end{align*}
In order to prove Theorem \ref{thm.1}, we first derive suitable estimates on a linearized system and then use Banach fixed point theorem to show the existence of the above problem near zero solutions, leading to the convergence of $\left(T^\eps,\psi^\eps\right)$ to $\left(T^a,\psi^a\right)$ as $\eps\to 0$.

\subsection{Linearized system} 
 We first consider the following linear system:
\begin{align}
    &\eps^2 \Delta g + \langle \phi - 4(T^a)^3g  \rangle = r_1 + \langle r \rangle, \label{eq:l1}\\
    &\eps \beta\cdot \nabla \phi+\phi - 4(T^a)^3 g = r_2+ r,\label{eq:l2}
\end{align}
where $r_1 = r_1(x), r=r(x,\beta)$, and $r_{2}=r_2(x,\beta)$ are given functions and the boundary conditions are taken to be 
\begin{align}
    &g(x) = 0,\text{ for any }x\in\partial\Omega, \label{eq:l1/b}\\
    &\phi(x,\beta) = 0,\text{ for any }(x,\beta)\in \Gamma_{-}.\label{eq:l2/b}
\end{align}
The existence of solutions to the above problem and suitable estimates on the solutions are stated in the following lemma.
\begin{lemma}\label{lm:linear}
    Let $\eps>0$, $(T^a,\psi^a)$ be the composite approximate solution constructed in section \ref{sec2}. Assuming  $r_1 \in L^2\cap L^\infty(\Omega)$, and $r, r_2 \in L^2\cap L^\infty(\Omega\times\mathbb{S}^2)$. Then, there exists a unique solution $(g,\phi) \in L^2 \cap L^\infty(\Omega) \times L^2 \cap L^\infty(\Omega\times\mathbb{S}^2)$ to system \eqref{eq:l1}-\eqref{eq:l2} with boundary conditions \eqref{eq:l1/b}-\eqref{eq:l2/b}. Moreover, the solution $(g,\phi)$ satisfies the following estimates
    \begin{align}\label{eq:l2result}
        &\eps \|\phi\|_{L^2(\Omega\times\mathbb{S})} + \eps \| g\|_{H^1(\Omega)}+\sqrt{\eps} \|\phi\|_{L^2(\Gamma_+)} + \|\phi-4(T^a)^3 g\|_{L^2(\Omega\times \mathbb{S}^2)} \nonumber\\
        &\quad \le C\|r\|_{L^2(\Omega\times\mathbb{S}^2)} + \frac{C}{\eps}(\|r_1\|_{L^2(\Omega)} + \|r_2\|_{L^2(\Omega\times\mathbb{S}^2)}).    
    \end{align}
    and
    \begin{align}\label{eq:linfresult}
        &\|\phi\|_{L^\infty(\Omega\times\mathbb{S}^2)}+\|g\|_{L^\infty(\Omega)} \nonumber\\
        &\quad \le \frac{C}{\eps^{2}} \|r\|_{L^2(\Omega\times\mathbb{S}^2)}+ \frac{C}{\eps^3}(\|r_1\|_{L^2(\Omega)}+ \|r_2\|_{L^2(\Omega\times\mathbb{S}^2)}  ) + C\|r_2\|_{L^\infty(\Omega\times\mathbb{S}^2)}+C\|r\|_{L^\infty(\Omega)},    
    \end{align}
    where $C>0$ is a constant not depending on $\eps$.
\end{lemma}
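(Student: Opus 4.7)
The plan is to derive the a priori $L^2$ estimate \eqref{eq:l2result} first, obtain existence and uniqueness via a continuation argument in the coupling parameter, and finally upgrade to the $L^\infty$ estimate \eqref{eq:linfresult} by combining the $L^2$ bounds with elliptic regularity and the transport structure. Throughout, the discrepancy $h:=\phi-4(T^a)^3 g$, which by \eqref{eq:l2} satisfies $h=-\eps\beta\cdot\nabla\phi+r_2+r$, serves as the Chapman--Enskog remainder and is the natural quantity to estimate directly.

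For the $L^2$ estimate, I will combine three energy identities. Testing \eqref{eq:l1} against $-g$ yields the clean cancellation
\[ \eps^2\|\nabla g\|_{L^2}^2 = \int_\Omega g\langle h\rangle\,dx - \int_\Omega g(r_1+\langle r\rangle)\,dx, \]
since the $(T^a)^3 g^2$ contributions cancel between $\langle\phi\rangle$ and $-16\pi(T^a)^3 g$. Testing \eqref{eq:l1} against $-4(T^a)^3 g$ invokes the coercivity of Lemma \ref{lm.g2} and controls the boundary-layer part of $\|\nabla g\|$. Testing \eqref{eq:l2} against $h$ and integrating the term $\eps\int\beta\cdot\nabla\phi\,h$ by parts produces $\tfrac{\eps}{2}\|\phi\|_{L^2(\Gamma_+)}^2$ (the boundary contribution on $\Gamma_-$ vanishes since $g|_{\partial\Omega}=0$) together with cross terms $4\eps\int(T^a)^3\beta\cdot\nabla g\,h$ and $4\eps\int g\,\beta\cdot\nabla((T^a)^3)\,h$; after substituting $\phi=h+4(T^a)^3 g$, the would-be singular contributions $\eps\int(T^a)^6 g\,\beta\cdot\nabla g$ and $\eps\int(T^a)^3 g^2\,\beta\cdot\nabla((T^a)^3)$ vanish by the first-moment identity $\int_{\mathbb{S}^2}\beta\,d\beta=0$, leaving only mild $g$--$h$ pairings that can be absorbed by Young's inequality with appropriately tuned $\eps$-weights. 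Closing the resulting system of inequalities reproduces \eqref{eq:l2result}.

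With the a priori $L^2$ bound in hand, uniqueness is immediate from linearity. Existence follows by the method of continuity applied to the family $\mathcal{L}_t(g,\phi):=(\eps^2\Delta g+\langle\phi\rangle-16\pi t(T^a)^3 g,\;\eps\beta\cdot\nabla\phi+\phi-4t(T^a)^3 g)$ with $t\in[0,1]$: at $t=0$ the elliptic and kinetic parts decouple and are each classically solvable, and the estimate above carries over uniformly in $t$. For the $L^\infty$ estimate, I will run a two-step bootstrap. Backward integration along characteristics in \eqref{eq:l2}, exploiting $\phi|_{\Gamma_-}=0$, gives the pointwise bound $\|\phi\|_{L^\infty}\le C\|g\|_{L^\infty}+\|r_2\|_{L^\infty}+\|r\|_{L^\infty}$. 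Rewriting \eqref{eq:l1} as $\Delta g = \eps^{-2}(r_1+\langle r\rangle-\langle h\rangle)$ with $g|_{\partial\Omega}=0$, applying $H^2$ Dirichlet regularity, and invoking the Sobolev embedding $H^2(\Omega)\hookrightarrow L^\infty(\Omega)$ valid in three dimensions, yields $\|g\|_{L^\infty}\le C\eps^{-2}(\|r_1\|_{L^2}+\|r\|_{L^2}+\|h\|_{L^2})$. Substituting the $L^2$ control of $\|h\|$ already obtained produces \eqref{eq:linfresult}.

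The main obstacle is unambiguously the $L^2$ step. The coupling integral $\eps\int g\,\beta\cdot\nabla((T^a)^3)\,h$ contains $\nabla T^a$, which is $O(1/\eps)$ inside the boundary layer, so the explicit $\eps$ factor only restores boundedness rather than smallness; absent the cancellation $\int_{\mathbb{S}^2}\beta\,d\beta=0$, the dominant quadratic pairing would be genuinely singular and the closure would break. In parallel, testing \eqref{eq:l1} against $-4(T^a)^3 g$ naively generates a boundary-layer contribution $\int|\nabla T^a|^2 g^2$ that is \emph{a priori} not controlled by the dissipation; it is precisely the spectral assumption \ref{asA}, packaged as the weighted Poincar\'e-type inequality of Lemma \ref{lm.g2}, that forces this sign-indefinite term to be dominated. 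Threading between these two singular phenomena---the $\beta$-moment cancellation on the transport side and the spectral coercivity on the elliptic side---is the heart of the argument.
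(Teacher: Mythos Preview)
Your $L^\infty$ step matches the paper's almost verbatim, and your transport--energy identity (testing \eqref{eq:l2} against $h$ and using $\int_{\mathbb{S}^2}\beta\,d\beta=0$ to kill the $g\,\beta\cdot\nabla g$ and $g^2\,\beta\cdot\nabla((T^a)^3)$ pieces) is correct and is essentially a repackaging of the paper's Step~1 (which tests \eqref{eq:l2} against $\phi$ and \eqref{eq:l1} against $4(T^a)^3 g$; the two computations differ only by harmless cross terms). The existence/uniqueness part is fine.

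The gap is in closing the $L^2$ estimate. Your three tests yield, schematically,
\[
\eps^2\kappa\|\nabla g\|^2 + \eps\|\phi\|_{L^2(\Gamma_+)}^2 + \|h\|^2 \;\lesssim\; C\eps^2\|g\|_{L^2}^2 + C\|g\|_{L^2}\|h\| + \text{sources},
\]
where the $C\eps^2\|g\|_{L^2}^2$ comes from the lower-order term in Lemma~\ref{lm.g2} and the $C\|g\|\|h\|$ from your surviving cross term $\eps\int g\,\beta\cdot\nabla((T^a)^3)\,h$ (indeed $\eps\|\nabla T^a\|_{L^\infty}=O(1)$ in the layer, so no smallness). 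None of your identities puts $\|g\|_{L^2}^2$ on the \emph{left} with a useful coefficient. Poincar\'e only gives $\|g\|\le C_P\|\nabla g\|$, which feeds back as $\eps^2\|\nabla g\|^2\lesssim C\|\nabla g\|^2+\dots$ and is vacuous for small $\eps$; equivalently, from your identity $\eps^2\|\nabla g\|^2=\int g\langle h\rangle-\dots$ one only gets $\|g\|\le C_P\|\nabla g\|\le C\eps^{-2}\|h\|$, and substituting this into the $\|g\|\|h\|$ term overwhelms the $\|h\|^2$ dissipation. The system does not close.

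The paper's remedy is a duality step you are missing. One introduces the auxiliary potential $\ell$ solving $\Delta\ell=g$ in $\Omega$ with $\ell|_{\partial\Omega}=0$, tests \eqref{eq:l1} against $\ell$, and then iterates the transport equation three times by parts (equivalently, expands the operator $\mathcal{A}h=\langle\varphi-h\rangle$ where $\eps\beta\cdot\nabla\varphi+\varphi-h=\rho$). Because $\langle\beta\rangle=\langle(\beta\cdot\nabla)^3\ell\rangle=0$ while $\langle(\beta\cdot\nabla)^2\ell\rangle=\tfrac{4\pi}{3}\Delta\ell$, this reveals the hidden diffusion and produces
\[
\eps^2\|g\|_{L^2}^2 + \tfrac{4\pi}{3}\eps^2\int 4(T^a)^3 g^2\,dx \;\lesssim\; \eps^3\|h\|^2 + \eps^3\|g\|_{H^1}^2 + \eps^2\|\phi\|_{L^2(\Gamma_+)}^2 + \text{sources}.
\]
The right-hand side is now lower order (extra powers of $\eps$) relative to the Step~1 dissipation, so adding a large multiple of this to your energy inequality absorbs the rogue $C\eps^2\|g\|^2$ and closes to \eqref{eq:l2result}. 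Without this auxiliary-potential trick, the $L^2$ bound cannot be obtained from the direct tests you propose.
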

\begin{proof}
    Existence of the linear system \eqref{eq:l1}-\eqref{eq:l2} with homogeneous boundary conditions \eqref{eq:l1/b}-\eqref{eq:l2/b} follows from standard theory of elliptic and transport equations. To derive the estimates \eqref{eq:l2result} and \eqref{eq:linfresult}, we  first derive the energy estimate. Then the $L^2$ type estimate is derived. Finally, the $L^\infty$ type estimate is shown.

\emph{{Step 1: The energy estimate.}} 
We multiply \eqref{eq:l1} by $4(T^a)^3 g$ and \eqref{eq:l2} by $\phi$, and integrate over $x\in \Omega$ and $\beta\in \mathbb{S}^2$ to get 
\begin{align} \label{eq:e1}
    &-\int_{\Omega } \eps^2 4(T^a)^3 g \Delta g dx + \iint_{\Omega\times\mathbb{S}^2} \eps \beta\cdot \nabla \frac{\phi^2}{2} d\beta dx + \iint_{\Omega\times\mathbb{S}^2} (\phi-4(T^a)^3 g)^2 d\beta dx \nonumber \\
    &\phantom{xx}{} = \iint_{\Omega\times\mathbb{S}^2} \phi r_2 d\beta dx - \int_\Omega 4(T^a)^3 gr_1 dx + \iint_{\Omega\times\mathbb{S}^2} (\phi - 4(T^a)^3 g) rd\beta dx.
\end{align} 
The boundary condition \eqref{eq:l2/b} implies 
\begin{align}\label{eq:e2}
    \iint_{\Omega\times\mathbb{S}^2} \eps \beta\cdot \nabla \frac{\phi^2}{2} d\beta dx &= \eps \iint_{\Gamma} \beta\cdot n \phi^2 d\beta d\sigma_x = \eps \iint_{\Gamma_+} \beta\cdot n \phi^2 d\beta d\sigma_x = \eps \|\phi\|_{L^2(\Gamma_+)}^2 .
\end{align}
By Lemma \ref{lm.g2}, inequality \eqref{eq:est>g2} gives
\begin{align}\label{eq:e3}
    -\int_{\Omega} 4(T^a)^3 g \Delta g dx \ge \kappa \|\nabla g\|_{L^2(\Omega)}^2 -C\|g\|_{L^2(\Omega)}^2. 
\end{align}
Applying Young's inequality on the last term of \eqref{eq:e1} gives
\begin{align*}
    \iint_{\Omega\times\mathbb{S}^2} (\phi - 4(T^a)^3 g) rd\beta dx \le \frac{1}{2} \iint_{\Omega\times\mathbb{S}^2} (\phi-4(T^a)^3 g)^2 d\beta dx + \frac{1}{2} \iint_{\Omega\times\mathbb{S}^2} r^2 d\beta dx.
\end{align*}
Taking the above inequality and \eqref{eq:e2},\eqref{eq:e3} into \eqref{eq:e1}, we obtain the following energy estimate: 
\begin{align}\label{eq:energyest}
    &\eps^2 \kappa \|\nabla g\|_{L^2(\Omega)}^2+\eps \|\phi\|_{L^2(\Gamma_+)}^2 + \frac12 \|\phi-4(T^a)^3 g\|_{L^2(\Omega\times \mathbb{S}^2)}^2 \nonumber \\
    &\quad \le C \eps^2 \|g\|_{L^2(\Omega)}^2 + \iint_{\Omega\times\mathbb{S}^2} \phi r_2 d\beta dx - \int_\Omega 4(T^a)^3 gr_1 dx + \frac12 \|r\|_{L^2(\Omega\times\mathbb{S}^2)}^2.
\end{align}
\bigskip

\emph{{Step 2:  The $L^2$ estimate.}} 
First we estimate the $L^2$ norm of $g$. Given $\rho=\rho(x,\beta) \in L^2(\Omega\times\mathbb{S}^2)$, define the operator $\mathcal{A}:L^2(\Omega)\mapsto L^2(\Omega)$ by 
 \begin{align}\label{eq:opA}
     \mathcal{A} h = \langle \varphi - h\rangle,\quad \text{ where }\varphi \text{ solves } 
     \left\{\begin{array}{cl}
        \varepsilon \beta \cdot \nabla \varphi + \varphi - h= \rho, &\text{ in }\Omega\times\mathbb{S}^2,\\
        \varphi(x,\beta)=0,&\text{ for } (x,\beta)\in \Gamma_-,
     \end{array}\right.
 \end{align}
 for $h=h(x) \in L^2(\Omega)$.
Then for any function $\ell=\ell(x) \in C^2(\Omega)$ satisfying $\ell=0$ on $\partial\Omega$, 
\begin{align*}
    \int_\Omega \mathcal{A} h \cdot \ell dx &= \int_\Omega \int_{\mathbb{S}^2} (\varphi - h) \ell d\beta dx \nonumber\\
    &= - \int_\Omega \int_{\mathbb{S}^2} \varepsilon \beta\cdot \nabla \varphi \ell d\beta dx + \int_\Omega\int_{\mathbb{S}^2} \rho \ell d\beta dx \nonumber\\
    &=\int_\Omega \int_{\mathbb{S}^2} \varepsilon \varphi \beta \cdot \nabla \ell d\beta dx - \varepsilon\iint_{\Gamma} \beta\cdot n \varphi \ell d\beta d\sigma_x + \int_\Omega\int_{\mathbb{S}^2} \rho \ell d\beta dx \nonumber\\
    &= \eps \int_\Omega\int_{\mathbb{S}^2} (\varphi - h) \beta \cdot \nabla \ell d\beta dx - 0 + \int_\Omega\int_{\mathbb{S}^2} \rho \ell d\beta dx \nonumber\\
    & =-\eps\int_\Omega \int_{\mathbb{S}^2} \varepsilon \beta \cdot \nabla \varphi \beta\cdot \nabla \ell d\beta dx +\eps \int_\Omega \int_{\mathbb{S}^2} \rho \beta\cdot \nabla \ell d\beta dx + \int_\Omega\int_{\mathbb{S}^2} \rho \ell d\beta dx  \nonumber\\
    & = \eps^2 \int_\Omega \int_{\mathbb{S}^2}  (\varphi - h) (\beta\cdot \nabla)^2 \ell d\beta dx + \eps^2 \int_\Omega \int_{\mathbb{S}^2} h (\beta \cdot \nabla)^2 \ell  d\beta dx \nonumber\\
    &\quad -\eps^2 \int_{\Gamma_+} \beta\cdot n \varphi \beta\cdot \nabla \ell d\beta d\sigma_x+ \eps\int_\Omega \int_{\mathbb{S}^2} \rho \beta\cdot \nabla \ell d\beta dx+ \int_\Omega\int_{\mathbb{S}^2} \rho \ell d\beta dx \nonumber\\
    & = -\eps^3 \int_{\Omega}\int_{\mathbb{S}^2} (\beta \cdot\nabla)^2 \ell \beta\cdot \nabla \varphi d\beta dx + \eps^2 \int_\Omega\int_{\mathbb{S}^2} \rho (\beta\cdot\nabla)^2 \ell d\beta dx
     + \eps^2 \frac{4\pi}{3} \int_\Omega h \Delta \ell dx \nonumber \\&\quad-\eps^2 \int_{\Gamma_+} \beta\cdot n \varphi \beta\cdot \nabla \ell d\beta d\sigma_x + \eps \int_\Omega \int_{\mathbb{S}^2} \rho \beta\cdot \nabla \ell d\beta dx+ \int_\Omega\int_{\mathbb{S}^2} \rho \ell d\beta dx \nonumber \\
    &=\eps^3 \int_{\Omega}\int_{\mathbb{S}^2} (\varphi - \langle \varphi\rangle/4\pi) (\beta\cdot\nabla)^3 \ell d\beta dx - \eps^3 \int_{\Gamma_+} (\beta\cdot\nabla)^2 \ell \beta\cdot n \varphi d\beta d\sigma_x \nonumber\\
    &\quad + \eps^2 \int_\Omega\int_{\mathbb{S}^2} \rho (\beta\cdot\nabla)^2 \ell d\beta dx  + \eps^2 \frac{4\pi}{3} \int_\Omega h \Delta \ell dx  -\eps^2 \int_{\Gamma_+} \beta\cdot n \varphi \beta\cdot \nabla \ell d\beta d\sigma_x  \nonumber \\
    &\quad + \eps \int_\Omega \int_{\mathbb{S}^2} \rho \beta\cdot \nabla \ell d\beta dx+ \int_\Omega\int_{\mathbb{S}^2} \rho \ell d\beta dx \nonumber \\
    &\ge - C \eps^3 \|\varphi-\langle \varphi\rangle/(4\pi)\|_{L^2(\Omega\times\mathbb{S}^2)} \|\ell\|_{H^3(\Omega)} - C\eps^3\|\nabla^2 \ell\|_{L^2(\partial\Omega)}\|\varphi\|_{L^2(\Gamma_+)}\nonumber\\
    &\quad - C\eps^2 \|\rho\|_{L^2(\Omega\times\mathbb{S}^2)}\|\ell\|_{H^2(\Omega)}+ \eps^2 \frac{4\pi}{3} \int_\Omega h \Delta \ell dx - C\eps^2\|\nabla\ell\|_{L^2(\partial\Omega)}\|\varphi\|_{L^2(\Gamma_+)} \nonumber\\
    &\quad - C\eps \|\rho\|_{L^2(\Omega\times\mathbb{S}^2)}\|\ell\|_{H^1(\Omega)} + \int_\Omega\int_{\mathbb{S}^2} \rho \ell d\beta dx,
\end{align*}
where the fact that $\langle (\beta\cdot\nabla)^3 \ell\rangle =0$ is used, which is due to $\ell=\ell(x)$ not depending on $\beta$. By the trace theorem and Sobolev embeddings, the above inequality implies 
\begin{align}\label{eq:est/opA}
    \int_\Omega\mathcal{A}h\cdot \ell dx &\ge \frac{4\pi}{3}\eps^2 \int_\Omega h\Delta \ell dx - C \eps^3 \|\varphi - \langle\varphi\rangle/(4\pi)\|_{L^2(\Omega\times\mathbb{S}^2)} \|\ell\|_{H^3(\Omega)} - C\eps^3 \|\ell\|_{H^3(\Omega)} \|\varphi\|_{L^2(\Gamma_+)} \nonumber\\
    &\quad - C\eps^2\|\ell\|_{H^2(\Omega)}\|\varphi\|_{L^2(\Gamma_+)} - C\eps^2\|\rho\|_{L^2(\Omega\times\mathbb{S}^2)}\|\ell\|_{H^2(\Omega)} - C \eps \|\rho\|_{L^2(\Omega\times\mathbb{S}^2)}\|\ell\|_{H^1(\Omega)} \nonumber\\
    &\quad + \int_\Omega\int_{\mathbb{S}^2}\rho \ell d\beta dx.
\end{align}
Let $\rho=r_2 + r, \varphi=\phi, h =4(T^a)^3 g $ in \eqref{eq:opA},  equation \eqref{eq:l1} can be written as 
\begin{align*}
    \eps^2 \Delta g + \mathcal{A}(4(T^a)^3 g) = r_1  + \langle r\rangle.
\end{align*}
Let $\ell$ be the solution to 
\begin{align*}
    \Delta \ell = g, \text{ in }\Omega,\\
    \ell =0, \text{ on }\partial\Omega.
\end{align*}
We multiply the previous equation by $\ell$ and integration by parts to get 
\begin{align*}
    \varepsilon^2 \int_\Omega g^2 dx &+ \int_\Omega \mathcal{A}(4(T^a)^3 g ) \cdot \ell dx = \int_\Omega  ( r_1  + \langle r \rangle) \ell dx.
\end{align*}
By \eqref{eq:est/opA}, and the estimates for elliptic equations $\|\ell\|_{H^2(\Omega)}\le C \|g\|_{L^{2}(\Omega)}$, $\|\ell\|_{H^3(\Omega)}\le C \|g\|_{H^1(\Omega)}$, we have 
\begin{align*}
    &\int_\Omega \mathcal{A}(4(T^a)^3 g ) \ell dx \nonumber\\
    &\quad \ge \eps^2 \frac{4\pi}{3} \int_\Omega 4(T^a)^3 g^2 dx - C \eps^3 \|\phi-\langle \phi\rangle/(4\pi)\|_{L^2(\Omega\times\mathbb{S}^2)} \|g\|_{H^1(\Omega)} - C\eps^3 \|g\|_{H^1(\Omega)}\|\phi\|_{L^2(\Gamma_+)}\nonumber\\
    &\qquad - C \eps^2 \|g\|_{L^2(\Omega)}\|\phi\|_{L^2(\Gamma_+)} - C\eps (\|r_2\|_{L^2(\Omega\times\mathbb{S}^2)} + \|r\|_{L^2(\Omega\times\mathbb{S}^2)}) \|g\|_{L^2(\Omega)} + \int_\Omega \int_{\mathbb{S}^2} (r_2+r) \ell d\beta dx \nonumber\\
    &\ge \eps^2 \frac{4\pi}{3} \int_\Omega 4(T^a)^3 g^2 dx - C \eps^3 \|\phi - 4(T^a)^3 g\|_{L^2(\Omega\times\mathbb{S}^2)}^2 - C \eps^3 \|g\|_{H^1(\Omega)}^2 - C\eps^3 \|g\|_{H^1(\Omega)}^2 - C\eps^3\|\phi\|_{L^2(\Gamma_+)}^2\nonumber\\
    &\quad -\frac14\eps^2 \|g\|_{L^2(\Omega)}^2- C\eps^2 \|\phi\|_{L^2(\Gamma_+)}^2 -C(\|r\|_{L^2(\Omega\times\mathbb{S}^2)}^2 + \|r_2\|_{L^2(\Omega\times\mathbb{S}^2)}^2)- \frac14 \eps^2 \|g\|_{L^2(\Omega)}^2 \nonumber\\
    &\quad +  \int_\Omega \int_{\mathbb{S}^2} (r_2+r) \ell d\beta dx .
\end{align*}
Taking it into the previous equation gives
\begin{align*}
    &\eps^2 \|g\|_{L^2(\Omega)}^2 + \eps^2 \|2(T^a)^{3/2} g\|_{L^2(\Omega)}^2 \nonumber\\
    &\quad  \le C\eps^3 \|\phi - (4T^a)^3 g\|_{L^2(\Omega\times\mathbb{S}^2)}^2 + C\eps^3 \| g\|_{H^1(\Omega)}^2 + C \eps^2 \|\phi\|_{L^2(\Gamma_+)}^2 + C  (\|r\|_{L^2(\Omega\times\mathbb{S}^2)}^2+ \|r_2\|_{L^2(\Omega\times\mathbb{S}^2)}^2)  \nonumber\\
    &\qquad + \int_\Omega r_1 \ell dx - \int_\Omega \int_{\mathbb{S}^2} r_2 \ell d\beta dx \nonumber\\
    &\quad \le  C\eps^3 \|\phi - (4T^a)^3 g\|_{L^2(\Omega\times\mathbb{S}^2)}^2 + C\eps^3 \| g\|_{H^1(\Omega)}^2 + C \eps^2 \|\phi\|_{L^2(\Gamma_+)}^2 + C  (\|r\|_{L^2(\Omega\times\mathbb{S}^2)}^2+ \|r_2\|_{L^2(\Omega\times\mathbb{S}^2)}^2)  \nonumber\\
    &\qquad + \frac{C}{\eps^2}( \|r_1\|_{L^2(\Omega)}^2 + \|r_2\|_{L^2(\Omega\times\mathbb{S}^2)}^2) + \frac14 \eps^2\|g\|_{L^2(\Omega)}^2.
\end{align*}
Combining this inequality with \eqref{eq:energyest}, we obtain for $\eps$ sufficiently small ($\eps\le \min\{1,\kappa\}/C$),
\begin{align*}
    &\eps^2 \kappa \|\nabla g\|_{L^2(\Omega)}^2 + \eps^2\|2(T^a)^{3/2}g\|_{L^2(\Omega)}^2+\eps \|\phi\|_{L^2(\Gamma_+)}^2 + \|\phi-4(T^a)^3 g\|_{L^2(\Omega\times \mathbb{S}^2)}^2 \nonumber\\
    &\le C \|r\|_{L^2(\Omega\times\mathbb{S}^2)}^2 + \frac{C}{\eps^2}(\|r_1\|_{L^2(\Omega)}^2 + \|r_2\|_{L^2(\Omega\times\mathbb{S}^2)}^2) + \iint_{\Omega\times\mathbb{S}^2} \phi r_2 d\beta dx - \int_\Omega 4(T^a)^3 gr_1 dx + C\|r_1\|_{L^2(\Omega)}^2.
\end{align*}
Using the inequalities 
\begin{align*}
    \int_{\Omega} \int_{\mathbb{S}^2} \phi r_2 d\beta dx &= \int_{\Omega} \int_{\mathbb{S}^2} (\phi - 4(T^a)^3 g) r_2 d\beta dx + \int_\Omega \int_{\mathbb{S}^2} 4(T^a)^3 g r_2 d\beta dx \nonumber\\
    &\le \frac12 \|\phi-4(T^a)^3 g\|_{L^2(\Omega\times \mathbb{S}^2)}^2 + \frac12 \|r_2\|_{L^2(\Omega\times\mathbb{S}^2)}^2 + \frac14 \eps^2 \|2(T^a)^{3/2}g\|_{L^2(\Omega)}^2 + \frac{C}{\eps^2}\|r_2\|_{L^2(\Omega\times\mathbb{S}^2)}^2
\end{align*}
and 
\begin{align*}
    -\int_\Omega 4(T^a)^3 g r_1 dx \le  \frac14 \eps^2 \|2(T^a)^{3/2}g\|_{L^2(\Omega)}^2 + \frac{C}{\eps^2}\|r_1\|_{L^2(\Omega)}^2,
\end{align*}
we get 
\begin{align*}
    & \eps^2 \| g\|_{H^1(\Omega)}^2 + \eps^2 \|2(T^a)^{3/2}g\|_{L^2(\Omega)}^2+\eps \|\phi\|_{L^2(\Gamma_+)}^2 + \|\phi-4(T^a)^3 g\|_{L^2(\Omega\times \mathbb{S}^2)}^2 \nonumber\\
    &\quad \le C\|r\|_{L^2(\Omega\times\mathbb{S}^2)}^2 + \frac{C}{\eps^2}(\|r_1\|_{L^2(\Omega)}^2 + \|r_2\|_{L^2(\Omega\times\mathbb{S}^2)}^2).
\end{align*}
Note that 
\[
\eps^2 \|\phi\|_{L^2(\Omega\times\mathbb{S}^2)}^2 \le \eps^2 \|\phi - 4(T^a)^3 g\|_{L^2(\Omega\times\mathbb{S}^2)}^2 + \eps^2\|4(T^a)^3 g\|_{L^2(\Omega\times\mathbb{S}^2)}^2 \le \eps^2  \|\phi - 4(T^a)^3 g\|_{L^2(\Omega\times\mathbb{S}^2)}^2 + C\eps^2 \|g\|_{L^2(\Omega)}^2,
\]
due to $T^a$ being bounded. Therefore, we arrive at the estimate \eqref{eq:l2result}.

 \bigskip 

 \emph{{Step 3: $L^\infty$ estimate.}} We now derive the $L^\infty$ estimate of $g,\phi$. First, by the maximum principle for linear transport equation, see for example Lemma 3.1 in \cite{wu2015geometric}, the following estimate holds for \eqref{eq:l2}:
 \begin{align*}
     \|\phi\|_{L^\infty(\Omega\times\mathbb{S}^2)} \le \|4(T^a)^3 g\|_{L^\infty(\Omega)} + \|r_2\|_{L^\infty(\Omega\times\mathbb{S}^2)} + \|r\|_{L^\infty(\Omega\times\mathbb{S}^2)}.
 \end{align*}
 Since $\|T^a\|_{L^\infty(\Omega)}$ is bounded,  
 \begin{align}\label{eq:linf1}
    \|\phi\|_{L^\infty(\Omega\times\mathbb{S}^2)} \le C \|g\|_{L^\infty(\Omega)} + \|r_2\|_{L^\infty(\Omega\times\mathbb{S}^2)} + \|r\|_{L^\infty(\Omega\times\mathbb{S}^2)}.
\end{align}
We now give the $L^\infty$ estimate of $g$. Equation \eqref{eq:l1} can be written as 
\begin{align}\label{eq:D2g=f}
    \Delta g = f,
\end{align}
with $f = (-\langle \phi - 4(T^a)^3 g\rangle + r_1+\langle  r \rangle)/\eps^2$.
According to the elliptic regularity, we have 
\begin{align*}
    \|g\|_{L^\infty(\Omega)} \le C(\|g\|_{L^2(\Omega)} + \|f\|_{L^2(\Omega)}).
\end{align*}
combining the above inequality with \eqref{eq:l2result}, we obtain
\begin{align*}
    &\|g\|_{L^\infty(\Omega)} \le C(\|g\|_{L^2(\Omega)} +\|f\|_{L^2(\Omega)}) \\
    &\quad \le \frac{C}{\eps} \|r\|_{L^2(\Omega\times\mathbb{S}^2} + \frac{C}{\eps^2} ( \|r_1\|_{L^2(\Omega)} + \|r_2\|_{L^2(\Omega\times\mathbb{S})})+ \frac{C}{\eps^2} \left(\|\phi-4(T^a)^3 g\|_{L^2(\Omega)} + \|r_1\|_{L^2(\Omega)} + \|r\|_{L^2(\Omega\times\mathbb{S})}\right).
\end{align*}
 Adding  the above inequality with \eqref{eq:linf1} and using \eqref{eq:l2result} on the last term of the above inequality, we arrive to the estimate \eqref{eq:linfresult},
which finishes the proof of Lemma \ref{lm:linear}.
\end{proof}

\begin{rem}
    We follow a similar procedure as \cite{wu2015geometric}. Here the estimate on the elliptic equation \eqref{eq:l1} is needed, whereas in \cite{wu2015geometric} only the estimate on the transport equation is involved. In particular, we derive the $L^2$ and $L^\infty$ energy estimates. Here we only give the estimates on the solutions. However, the existence of solutions can be obtained by using classical existence theory of linear transport equations and elliptic equations.
\end{rem}
\subsection{Nonlinear system}

We now show the existence and uniqueness of solutions to system \eqref{eq:1}-\eqref{eq:2} around the constructed composite approximate solution $(T^a,\psi^a)$ and finish the proof of Theorem \ref{thm.1}. 
\begin{proof}[Proof of Theorem \ref{thm.1}]
    The existence and uniqueness of solutions to system \eqref{eq:1}-\eqref{eq:2} can be obtained by showing the existence for system \eqref{eq:n1}-\eqref{eq:n2}. To achieve this, we construct a sequence of functions $\{g^n,\phi^n\}$ by 
\begin{align*}
    g^0(x) = 0, \quad \phi^0(x,\beta) = 0,
\end{align*}
and for $n\ge 1$,
\begin{align}
    &\eps^2 \Delta g^n + \langle \phi^n - 4(T^a)^3 g^n \rangle = -\mathcal{R}_1 + 4\pi(6(T^a)^2 (g^{n-1})^2 + 4(T^a) (g^{n-1})^3  + (g^{n-1})^4), \label{eq:nn1}\\
    &\eps \beta\cdot \nabla \phi^n +(\phi^n - 4(T^a)^3 g^n) = -\mathcal{R}_2 + 6(T^a)^2 (g^{n-1})^2 + 4(T^a) (g^{n-1})^3  + (g^{n-1})^4,\label{eq:nn2}
\end{align}
with boundary conditions 
\begin{align*}
    &g^n (x)= 0,\text{ for }x\in \partial\Omega,\\
    &\phi^n(x,\beta) = \phi_b(x,\beta),\text{ for } (x,\beta) \in \Gamma_{-}.
\end{align*}
The above relation defines a mapping $\mathcal{T}$ with $(g^n,\phi^n)=\mathcal{T}((g^{n-1},\phi^{n-1}))$. 
Note that the above system is the same with \eqref{eq:tt1}-\eqref{eq:tt2} in the introduction with $T^n = g^n+T^a$, $\psi^n=\phi^n+\psi^a$.

Let $Y=L^2(\Omega)\cap L^\infty(\Omega)$ and $W=L^2(\Omega\times\mathbb{S}^2)\cap L^\infty(\Omega\times\mathbb{S}^2)$. We consider the solution in the function space 
\begin{align*}
    \mathcal{O}_s:=\{(g,\phi)\in Y\times W:\|g\|_{L^2(\Omega)}\le C\eps^s, \|g\|_{L^\infty(\Omega)}\le C\eps^s, \|\phi\|_{L^2(\Omega\times\mathbb{S}^2)}\le C\eps^s, \|\phi\|_{L^\infty(\Omega\times\mathbb{S}^2)} \le C\eps^s\},
\end{align*}
with $s>0$ is a constant to be chosen later. 

First we show $\mathcal{T}$ maps the space $\mathcal{O}_s$ onto itself.
Assume the residuals satisfy $\|\mathcal{R}_1\|_{L^2(\Omega)}$, $\|\mathcal{R}_2\|_{L^2(\Omega\times\mathbb{S}^2)}$, $\|\mathcal{R}_1\|_{L^\infty(\Omega)}$, $\|\mathcal{R}_2\|_{L^\infty(\Omega\times\mathbb{S}^2)} \le  \eps^p$ for some constant $p>0$. Next, we show if $(g^{n-1},\psi^{n-1}) \in \mathcal{O}_{s}$, then $(g^n,\psi^n) \in \mathcal{O}_{s}$. By \eqref{eq:l2result} with $r_1=-\mathcal{R}_1,r_2=-\mathcal{R}_2$ and $r = 6(T^a)^2 (g^{n-1})^2 + 4T^a (g^{n-1})^3 + (g^{n-1})^4$, the following estimate holds:
\begin{align*}
    &\varepsilon \|\phi^n\|_{L^2(\Omega\times\mathbb{S}^2)} + \eps \|g^n\|_{H^1(\Omega)} + \sqrt{\eps}\|\phi^n\|_{L^2(\Gamma_+)} + \|\phi^n-4(T^a)^3 g^n\|_{L^2(\Omega\times\mathbb{S}^2)} \nonumber\\
    &\quad \le \frac{C}{\eps} \left(\|\mathcal{R}_1\|_{L^2(\Omega)} + \|\mathcal{R}_2\|_{L^2(\Omega\times\mathbb{S}^2)}\right)+ C\left(\|6(T^a)^2 (g^{n-1})^2 + 4T^a (g^{n-1})^3 + (g^{n-1})^4\|_{L^2(\Omega)}\right) \nonumber\\
    &\quad \le C \eps^{p-1} + C( \|g^{n-1}\|_{L^4}^2 + \|g^{n-1}\|_{L^8}^4) \nonumber\\
    &\quad \le C \eps^{p-1} + C (\|g^{n-1}\|_{L^\infty(\Omega)} \|g^{n-1}\|_{L^2(\Omega)} + \|g^{n-1}\|_{L^\infty}^3\|g^{n-1}\|_{L^2(\Omega)}) \nonumber\\
    &\quad \le C \eps^{p-1} + C \eps^{2s} .
\end{align*}
Assume $p-2\ge s$ and $2s-1\ge s$, i.e. $p\ge s+2$ and $s\ge 1$, the above inequality implies 
\begin{align*}
    \|g^{n-1}\|_{H^1(\Omega)}, \|\phi^n\|_{L^2(\Omega\times\mathbb{S}^2)}\le C \eps^{p-2} + C \eps^{2s-1} \le C \eps^s.
\end{align*}
Moreover, by \eqref{eq:linfresult}, 
\begin{align*}
    \|\phi\|_{L^\infty(\Omega\times\mathbb{S}^2)} + \|g\|_{L^\infty(\Omega)} &\le \frac{C}{\eps^{2}} \left(\|6(T^a)^2 (g^{n-1})^2 + 4T^a (g^{n-1})^3 + (g^{n-1})^4\|_{L^2(\Omega)}\right) \nonumber\\
    &\quad +  \frac{1}{\eps^3}(\|\mathcal{R}_1\|_{L^2(\Omega)}+ \|\mathcal{R}_2\|_{L^2(\Omega\times\mathbb{S}^2)}  ) + \|\mathcal{R}_2\|_{L^\infty(\Omega\times\mathbb{S}^2)} \nonumber\\
    &\quad +\|6(T^a)^2 (g^{n-1})^2 + 4T^a (g^{n-1})^3 + (g^{n-1})^4\|_{L^\infty(\Omega)} \nonumber\\
    &\le C \eps^{2s-2} + \eps^{p-3} + \eps^p + \eps^{2s} \nonumber\\
    &\le  C \eps^{2s-2} + \eps^{p-3} .
\end{align*}
Assume $p-3\ge s$ and $2s-2\ge s$, i.e. $p\ge s+3$ and $s\ge 2$, the above inequality implies 
\begin{align*}
    \|g^n\|_{L^\infty(\Omega)}, \|\phi^n\|_{L^\infty(\Omega)} \le C \eps^{2s-2} + \eps^{p-3} \le C \eps^s.
\end{align*}
Thus we obtain that $(g^n,\phi^n) \in \mathcal{O}_s$ and therefore $\mathcal{T}$ maps $\mathcal{O}_s$ onto itself.

Next we show the map $\mathcal{T}$
 is a contraction mapping. Let
$h^n = g^{n}-g^{n-1},\varphi^n = \phi^n - \phi^{n-1}$, then they satisfy
\begin{align*}
    &\eps^2 \Delta h^n + \langle \varphi^n - 4(T^a)^3 h^n \rangle = 4\pi f_n,\\
    &\eps \beta\cdot \nabla \varphi^n + \varphi^n - 4(T^a)^3 h^n = f_n,
\end{align*}
where 
\begin{align*}
    f_n &= 6(T^a)^2 (g^{n-1})^2 + 4(T^a) (g^{n-1})^3  + (g^{n-1})^4 - (6(T^a)^2 (g^{n-2})^2 + 4(T^a) (g^{n-2})^3  + (g^{n-2})^4) \nonumber\\
    &= 6(T^a)^2 (g^{n-1}+g^{n-2}) h^{n-1} + 4T^a h^{n-1} ((g^{n-1})^2-g^{n-1}g^{n-2}+(g^{n-2})^2) \nonumber\\
    &\quad + h^{n-1}(g^{n-1}+g^{n-2}) ((g^{n-1})^2+(g^{n-2})^2).
\end{align*}
Using \eqref{eq:l2result} with $r_1=r_2=0$, $r=f_n$, we obtain 
\begin{align*}
    &\eps \|\varphi^n\|_{L^2(\Omega\times\mathbb{S})} + \eps \| h^n\|_{H^1(\Omega)}+\sqrt{\eps} \|\varphi^n\|_{L^2(\Gamma_+)} + \|\varphi^n-4(T^a)^3 h^n\|_{L^2(\Omega\times \mathbb{S}^2)} \nonumber\\
    &\quad \le  \|f_n\|_{L^2(\Omega\times\mathbb{S}^2)} \le C\eps^{s} \|h^{n-1}\|_{L^2(\Omega)},
\end{align*}
hence 
\begin{align*}
    \|h^n\|_{H^1(\Omega)} + \|\varphi^n\|_{L^2(\Omega\times\mathbb{S})}  \le C\eps^{s-1} \|h^{n-1}\|_{L^2(\Omega)}.
\end{align*}
Using \eqref{eq:linfresult} with $r_1=r_2=0$, $r=f_n$, we obtain
\begin{align*}
    &\|\varphi^n\|_{L^\infty(\Omega\times\mathbb{S}^2)}+\|h^n\|_{L^\infty(\Omega)} \nonumber\\
    &\quad \le \frac{C}{\eps^{2}} \|f_n\|_{L^2(\Omega\times\mathbb{S}^2)}+C\|f_n\|_{L^\infty(\Omega)} \le C\eps^{s-2}\|h^{n-1}\|_{L^2(\Omega)} + C \eps^s \|h^{n-1}\|_{L^\infty(\Omega)}.
\end{align*}
Assume $s\ge 3$, then $C\eps^{s-2}, C \eps^s < 1$ for $\eps$ sufficiently small, the above two inequalities imply
\begin{align*}
    \|h^n\|_Y + \|\varphi^n\|_W \le C_1 (\|h^{n-1}\|_Y + \|\varphi^{n-1}\|_W )
\end{align*}
for some constant $0<C_1<1$. Therefore, for $p\ge s+3$ and $s\ge 3$, 
$\mathcal{T}$
 is a contraction mapping. By the Banach fixed point theorem, there exists a unique fixed point $(g,\phi)$ such that $(g,\phi)=\mathcal{T}((g,\phi))$. Therefore there exists a unique solution to \eqref{eq:n1}-\eqref{eq:n2} in $\mathcal{O}_s$.

Taking $s=3$ and $p=6$, we can conclude that 
\begin{align*}
    \|g\|_{Y} + \|\varphi\|_W \le C \eps^3.
\end{align*}
Note that in order to obain $\mathcal{R}_1,\mathcal{R}_2=O(\eps^6)$, we need to take $n=5$ in the expansion and by Lemma \ref{thm-ap}, 
$    \|\mathcal{R}_1\|_{L^2\cap L^\infty(\Omega)}, ~
\|\mathcal{R}_2\|_{L^2\cap L^\infty(\Omega\times\mathbb{S}^2)} \le C\eps^6$. We have 
\begin{align*}
    &\left\|T^\eps - \sum_{k=0}^5 \eps^k T_k - \sum_{k=0}^{5}\eps^k \bar{T}_k \right\|_{L^2\cap L^\infty(\Omega)} \le C \eps^3, \\
    &\left\|\psi^\eps - \sum_{k=0}^5 \eps^k \psi_k - \sum_{k=0}^{5} \eps^k\bar{\psi}_k \right\|_{L^2\cap L^\infty(\Omega\times\mathbb{S}^2)} \le C \eps^3.
\end{align*}
Therefore, we get 
\begin{align*}
    &\|T^\eps - T_0 - \bar{T}_0\|_{L^\infty(\Omega)} \le C\eps,\\
    &\|\psi^\eps - T_0^4 - \bar{\psi}_0 \|_{L^\infty(\Omega\times\mathbb{S}^2)} \le C\eps,
\end{align*}
which are \eqref{eq:thm} and finish the proof.
\end{proof}

\appendix 
\section{Existence of the steady state radiative transfer system}
Next we prove the existence for the steady state radiative transfer system \eqref{eq:1}-\eqref{eq:2} with boundary conditions \eqref{eq:1b}-\eqref{eq:2b}.
\begin{theorem}
    Assume $\gamma_1\le T_b \le \gamma_2$ and $\gamma_1^4 \le \psi_b \le \gamma_2^4$ for some constants $0\le \gamma_1\le \gamma_2$. There there exists a weak solution $(T^\eps,\psi^\eps)\in L^\infty(\Omega)\times L^\infty(\Omega\times\mathbb{S}^2)$ to \eqref{eq:1}-\eqref{eq:2}. 
\end{theorem}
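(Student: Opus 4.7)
The plan is to apply Schauder's fixed point theorem to a decoupling mapping. Define
\[
K := \{T \in L^\infty(\Omega) : \gamma_1 \le T \le \gamma_2 \text{ a.e.}\},
\]
which is convex and closed in $L^2(\Omega)$. Given $T \in K$, first solve the linear transport equation $\eps\beta\cdot\nabla \psi + \psi = T^4$ with $\psi|_{\Gamma_-} = \psi_b$ by explicit integration along backward characteristics; since $\gamma_1^4 \le T^4 \le \gamma_2^4$ and $\gamma_1^4 \le \psi_b \le \gamma_2^4$, the exponentially weighted average representation of $\psi$ gives $\gamma_1^4 \le \psi \le \gamma_2^4$. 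Then set $u := \langle \psi \rangle \in L^\infty(\Omega)$ and solve the monotone semilinear elliptic problem
\[
-\eps^2 \Delta \tilde{T} + 4\pi \tilde{T}^4 = u \quad \text{in } \Omega, \qquad \tilde{T}|_{\partial\Omega} = T_b,
\]
whose existence and uniqueness follow by minimizing the strictly convex coercive functional $J(T) = \tfrac{\eps^2}{2}\int_\Omega |\nabla T|^2\,dx + \tfrac{4\pi}{5}\int_\Omega T^5\,dx - \int_\Omega uT\,dx$ on the affine space $T_b + H_0^1(\Omega)$. A weak comparison principle (with $\gamma_1$ and $\gamma_2$ viewed as constant sub- and super-solutions of the monotone nonlinearity) yields $\gamma_1 \le \tilde{T} \le \gamma_2$. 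Thus the map $\mathcal{F}: T \mapsto \tilde{T}$ sends $K$ into $K$.

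Next I would verify continuity and compactness of $\mathcal{F}$ on $K$ with the $L^2(\Omega)$ topology. For continuity, if $T_n \to T$ in $L^2$ with $T_n \in K$, then $T_n^4 \to T^4$ in $L^2$ by the uniform pointwise bound; a direct energy estimate on the transport equation gives $\|\psi_n - \psi\|_{L^2(\Omega\times\mathbb{S}^2)} \le C \|T_n^4 - T^4\|_{L^2(\Omega)}$, hence $\langle \psi_n \rangle \to \langle \psi \rangle$ in $L^2(\Omega)$. Testing the difference of the elliptic equations against $\tilde{T}_n - \tilde{T}$ and exploiting the monotonicity inequality $(\tilde{T}_n - \tilde{T})(\tilde{T}_n^4 - \tilde{T}^4) \ge 0$ yields $\|\tilde{T}_n - \tilde{T}\|_{H^1(\Omega)} \le C\eps^{-2}\|\langle \psi_n - \psi\rangle\|_{L^2(\Omega)}$. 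For compactness, $\tilde{T}_n$ satisfies $-\eps^2 \Delta \tilde{T}_n = u_n - 4\pi \tilde{T}_n^4$ with right-hand side uniformly bounded in $L^\infty$, so by standard elliptic regularity $\tilde{T}_n$ is uniformly bounded in $W^{2,p}(\Omega)$ for every $p<\infty$, and Rellich-Kondrachov gives relative compactness in $L^2(\Omega)$. Schauder's theorem then produces $T \in K$ with $\mathcal{F}(T)=T$, and the pair $(T,\psi)$ built from it is a weak solution in $L^\infty(\Omega) \times L^\infty(\Omega\times \mathbb{S}^2)$ satisfying the prescribed boundary conditions.

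The main technical point requiring care is the compactness step on the cylindrical domain $\Omega = [0,1]\times \mathbb{T}^2$ with mixed Dirichlet/periodic boundary conditions: the elliptic $W^{2,p}$ estimate must be applied either by lifting $T_b$ to a smooth extension and working with a homogenized equation in $H_0^1$-type spaces, or by invoking a regularity result adapted to this geometry. A minor side issue is that the grazing directions $\{\beta_1=0\}\subset \mathbb{S}^2$ form a measure-zero set for which $\psi$ must be defined via an exponentially weighted integral along the (possibly non-closed) trajectory in $\mathbb{T}^2$; the resulting function is measurable and bounded on $\Omega\times\mathbb{S}^2$, so the fixed point argument is unaffected.
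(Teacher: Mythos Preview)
Your approach is essentially the same as the paper's: both apply Schauder's fixed point theorem to the decoupling map $T \mapsto \psi \mapsto \tilde T$, first solving the transport equation with source $T^4$ and then the semilinear elliptic equation with source $\langle\psi\rangle$, and both use the maximum/comparison principle to confine the iterates to the order interval $[\gamma_1,\gamma_2]$.

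The difference is in the level of detail. The paper's proof is terse: it asserts that $\mathcal{A}=\{\gamma_1\le T\le\gamma_2\}$ is a convex compact subset of $L^\infty$ and invokes Schauder directly, without checking continuity of $\mathcal{F}$ or compactness of its range (and in fact $\mathcal{A}$ is not compact in $L^\infty$). Your version repairs this by working in the $L^2$ topology, proving continuity of $\mathcal{F}$ via energy estimates on both subproblems, and obtaining compactness from elliptic $W^{2,p}$ regularity and Rellich--Kondrachov. You also supply an existence mechanism for the elliptic step (convex minimization) where the paper is silent. So your argument follows the same route but fills in the functional-analytic gaps that the paper leaves implicit.
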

\begin{proof}
    We show the existence by using the fixed-point theorem. Let $\mathcal{A}:=\{T^\eps \in L^\infty(\Omega):\gamma_1\le T^\eps \le \gamma_2\}$. We define the operator $\mathcal{F}: \mathcal{A}\to \mathcal{A}$ with $\theta=\mathcal{F}T$ by solving 
    \begin{equation}\label{eq:a//1}
    \begin{aligned}
        &\eps\beta\cdot \nabla \psi + \psi = T^4, \\
        &\psi^\eps(x,\beta) = \psi_b(x,\beta),\quad \text{for }(x,\beta)\in\Gamma_-,
    \end{aligned}
\end{equation}
    and 
    \begin{equation}\label{eq:a//2}
    \begin{aligned}
        &\eps^2\Delta \theta - 4\pi^2 \theta^4 = -\langle \psi\rangle,\\
        &\theta(x)=T_b(x),\quad \text{for }x\in \partial\Omega.
    \end{aligned}
\end{equation}
    Next we show if $\gamma_1\le T\le \gamma_2$, then $\gamma_1\le \theta\le \gamma_2$. First the maximum principle for the transport equation implies 
    \begin{align}
        \gamma_1^4 \le \psi \le \gamma_2^4.
    \end{align}
    The maximum principle for equation \eqref{eq:a//2} also implies $\gamma_1\le \theta\le \gamma_2$. Suppose $\theta$ reaches its maximum at $x_M\in\Omega$, then if $x_M\in \partial\Omega$, $\theta(X_M)=T_b \le \gamma_2$ and thus $\theta(x)\le \theta(x_M)\le \gamma_2$ for any $x\in\Omega$. Otherwise if $x_M$ is an interior point, then $\Delta\theta(x_M) \le 0$, and so 
    \begin{align}
        4\pi^2\theta^4(x_M) \le \langle\psi(x_M,\cdot)\rangle \le 4\pi^2\gamma_2^4,
    \end{align}
    hence $\theta(x)\le \theta(x_M) \le \gamma_2$. Using a similar contradiction argument, $\theta(x)\ge \gamma_1$ can be shown.

    Since $\mathcal{F}$ maps $\mathcal{A}$ to itself and $\mathcal{A}$ is a convex compact subset of the Banach space $L^\infty$. Hence by Schauder's fixed point theorem, there exists a fixed point $(T^\eps,\psi^\eps)$ of $\mathcal{F}$.  Since this fixed point satisfies \eqref{eq:a//1} and \eqref{eq:a//2}, hence $(T^\eps,\psi^\eps)$ is a solution to \eqref{eq:1}-\eqref{eq:2} with boundary conditions \eqref{eq:1b}-\eqref{eq:2b}.

\end{proof}

\begin{rem}
    Unlike the time dependent case, where the uniqueness can be shown by showing $\mathcal{F}$ is a contraction mapping (with time step small), we are not able to show $\mathcal{F}$ in the above proof is a contraction mapping and thus uniqueness is not obtained here.
\end{rem}

\section*{Acknowledgement}
The work of N.M. is supported by NSF grant DMS-1716466 and by Tamkeen under the NYU Abu Dhabi Research Institute grant of the center SITE. The work of M.G is supported by Tamkeen under the NYU Abu Dhabi Research Institute grant of the center SITE.

\bibliographystyle{siam}
\bibliography{Referencepaper2}

\end{document}